\pgfplotsset{compat=newest} 
\crefname{enumi}{}{}
\crefname{equation}{}{}
\DeclareMathOperator{\Tr}{Tr}
\DeclareMathOperator{\sgn}{sgn}
\DeclareMathOperator{\E}{\mathbf{E}}
\DeclareMathOperator{\Prob}{\mathbf{P}}
\DeclareMathOperator{\Spec}{Spec}
\newcommand{\ov}{\overline}
\newcommand{\ii}{\mathrm{i}}
\renewcommand{\C}{\mathbf{C}}
\newcommand{\C}{\mathbf{C}}
\renewcommand{\SS}{\mathcal{S}}
\newcommand{\un}{\underline}
\newcommand{\vx}{\bm{x}}
\newcommand{\vy}{\bm{y}}
\newcommand{\wt}{\widetilde}
\newcommand{\R}{\mathbf{R}}
\newcommand{\N}{\mathbf{N}}
\newcommand{\DD}{\mathbf{D}}
\newcommand{\cO}{\mathcal{O}}
\newcommand{\co}{{\scriptstyle\mathcal{O}}}
\newcommand{\cB}{\mathcal{B}}
\newcommand{\dif}{\operatorname{d}\!{}}
\DeclarePairedDelimiter{\braket}{\langle}{\rangle}%
\DeclarePairedDelimiter{\abs}{\lvert}{\rvert}%
\DeclarePairedDelimiter{\norm}{\lVert}{\rVert}%
\providecommand\given{}
\newcommand\SetSymbol[1][]{\nonscript\:#1\vert\allowbreak\nonscript\:\mathopen{}}
\DeclarePairedDelimiterX{\tuple}[1](){\renewcommand\given{\SetSymbol[\delimsize]}#1}
\DeclarePairedDelimiterX{\set}[1]\{\}{\renewcommand\given{\SetSymbol[\delimsize]}#1}
\DeclarePairedDelimiterXPP{\landauO}[1]{\cO}(){}{#1}
\DeclarePairedDelimiterXPP{\landauo}[1]{\co}(){}{#1}
\DeclarePairedDelimiterXPP{\landauOprec}[1]{\cO_\prec}(){}{#1}
\newlist{genprop}{enumerate}{1}
\numberwithin{equation}{section} 
\newtheorem{theorem}{Theorem}[section]
\newtheorem{assumption}{Assumption}
\newtheorem{lemma}[theorem]{Lemma}
\newtheorem{proposition}[theorem]{Proposition}
\newtheorem{remark}[theorem]{Remark}
\newcommand{\nc}{\normalcolor}
\date{\today}
\author{Giorgio Cipolloni}
\address{Princeton Center for Theoretical Science, Princeton University, Princeton, NJ 08544, USA}
\author{L\'aszl\'o Erd\H{o}s\(^\#\)}
\address{IST Austria, Am Campus 1, 3400 Klosterneuburg, Austria}
\author{Dominik Schr\"oder\(^{\ast}\)}
\address{ETH Zurich, R\"amistrasse 101, 8092 Zurich, Switzerland}
\email{gc4233@princeton.edu} 
\email{lerdos@ist.ac.at}
\email{dschroeder@ethz.ch}
\thanks{\(^\#\)Supported by ERC Advanced Grant ``RMTBeyond'' No.~101020331}
\thanks{\(^\ast\)Supported by the SNSF Ambizione Grant \texttt{PZ00P2 209089}}
\subjclass[2010]{60B20, 15B52} 
\keywords{Dyson Brownian Motion, Local Law, Girko’s Formula, Linear Statistics, Central Limit Theorem}
\title{MESOSCOPIC CENTRAL LIMIT THEOREM FOR NON-HERMITIAN RANDOM MATRICES}
\date{\today}
\begin{document}

\begin{abstract}
    We prove that the mesoscopic linear statistics \(\sum_i f(n^a(\sigma_i-z_0))\) of the eigenvalues \(\{\sigma_i\}_i\) of large $n\times n$ non-Hermitian random matrices with complex centred i.i.d.\ entries are asymptotically Gaussian for any $H^{2}_0$-functions \(f\) around any point $z_0$ in the bulk of the spectrum on any mesoscopic scale $0<a<1/2$. This extends our previous result \cite{1912.04100}, that was
    valid on the macroscopic scale, $a=0$, to cover the entire mesoscopic regime.
    The main novelty is a  \emph{local law} for the product of resolvents for
    the Hermitization of $X$ at spectral parameters $z_1, z_2$ with an improved error
    term in the entire mesoscopic regime $|z_1-z_2|\gg n^{-1/2}$.
    The proof  is dynamical; it relies on a recursive tandem of  the characteristic flow method
    and the Green function comparison idea combined with a separation
    of the unstable mode of the underlying stability operator.
\end{abstract}

\maketitle

\section{Introduction}

We consider the eigenvalues $\{ \sigma_i\}_{i=1}^n$ of an $n\times n$ random matrix $X$
with i.i.d.\ entries under the standard normalisation condition $\E x_{ij} =0$, $\E |x_{ij}|^2 =\frac{1}{n}$.
The classical \emph{circular law} \cite{MR773436, MR1428519, MR2409368}
asserts that the empirical eigenvalue density converges to the uniform distribution  on the unit disk $\DD$:
\begin{equation}\label{circ}
    \frac{1}{n}\sum_{i=1}^n f(\sigma_i) \to \frac{1}{\pi}\int_{\DD}  f(z)\dif^2 z, \quad n\to \infty,
\end{equation}
for any continuous bounded test function $f$.
In fact this limit also holds on any mesoscopic scale by the \emph{local circular law}
\cite{MR3230002}, i.e.
\begin{equation}\label{localcirc}
    \frac{n^{2a}}{n}\sum_{i=1}^n f\big(n^a (\sigma_i-z_0)\big) \to \frac{1}{\pi}\int_{\C}  f(z)\dif^2 z, \qquad  0<a<\frac{1}{2},
\end{equation}
where the compactly supported $C^2$ test function is scaled to concentrate on an $n^{-a}$-neighborhood
of any fixed point in the bulk spectrum $|z_0|<1$. The threshold $a<1/2$ is sharp since on scales of order $n^{-1/2}$
there are only  finitely many fluctuating eigenvalues hence a law of large number type concentration
cannot hold.

In this paper we prove a central limit theorem (CLT) for the fluctuation around the
local circular law~\eqref{localcirc} for  complex i.i.d. matrices,   i.e.
we show that
\begin{equation}\label{mesoclt}
    \sum_{i=1}^n f\big(n^a (\sigma_i-z_0)\big) -  \frac{n}{n^{2a}}\frac{1}{\pi}  \int_{\C}  f(z)\dif^2 z -\frac{1}{8\pi}\int_\mathbf{C}\Delta f(z)\,\dif^2z\Longrightarrow
    L(f)\sim \mathcal N_\C, \qquad  0<a<\frac{1}{2},
\end{equation}
and compute the variance $\E\abs{L(f)}^2=\norm{\nabla f}^2/4\pi$ of the limiting normal distribution.
Note the unusual normalisation:
the sum in~\eqref{mesoclt} contains roughly  $n^{1-2a}$ terms, but it is not divided  by $\sqrt{n^{1-2a}}$
unlike for the standard CLT for sums of  independent random variables. The eigenvalues $\sigma_i$ are
strongly correlated, and their fluctuations are much smaller than that of an independent
point process (e.g. Poisson). It is very remarkable that nevertheless the normal distribution emerges;
in fact these eigenvalues asymptotically follow a \emph{Gaussian Free Field (GFF)}, a logarithmically
correlated Gaussian process.

The CLT on the macroscopic scale, $a=0$, around the circular law~\eqref{circ},
\begin{equation}\label{macroclt}
    \sum_{i=1}^n f(\sigma_i) - \frac{n}{\pi}\int_{\DD}  f(z)\dif^2 z +
    \frac{\kappa_4}{\pi}\int_\DD f(z)(2\abs{z}^2-1)\dif^2 z-\frac{1}{8\pi}\int_\mathbf{C}\Delta f(z)\,\dif^2z\Longrightarrow  L(f)\sim\mathcal N_\C,
\end{equation}
has been proven earlier with a long history. Here \(\kappa_4:=n^2 \bigl[\E \abs{x_{ij}}^4-2(\E \abs{x_{ij}}^2)^2-\abs{\E x_{ij}^2}^2\bigr]\) denotes the normalised joint cumulant of \(x_{ij},x_{ij},\ov{x_{ij}},\ov{x_{ij}}\). Historically
the first results were for the \emph{complex Ginibre ensemble}, i.e. when $x_{ij}$ are complex Gaussians
(and therefore \(\kappa_4=0\));
in this case an explicit formula for the joint density function of all eigenvalues is available.
Forrester in~\cite{MR1687948}
proved~\eqref{macroclt} for  radially symmetric \(f\), he found
\(\E\abs{L(f)}^2 = (4\pi)^{-1}\int_{\DD} \abs{\nabla f}^2 \dif^2z\)
and gave a heuristic prediction that the variance $\E\abs{L(f)}^2$ contains
an additional boundary term %
for general  $f$.
Rider and Vir\'ag in~\cite{MR2361453} have rigorously verified Forrester's prediction for any \(f\in C^1(\DD)\)
and they also presented a GFF interpretation of the result. %
Rider in~\cite{MR2095933} also considered special indicator test functions depending
only on the angle or on the modulus that are not in $H^1(\DD)$ even in the mesoscopic regime
with $\E\abs{L(f)}^2$ growing as $\log n$.

Beyond the explicitly computable  complex Ginibre case the first result was obtained by
Rider and Silverstein~\cite[Theorem 1.1]{MR2294978}. They proved~\eqref{macroclt}
for \(X\) with general i.i.d.\ complex matrix elements
but only  for test functions \(f\) that are analytic on an unnaturally  large disk of radius 2.  In the real symmetry class
the domain of analyticity was optimized in~\cite{MR3540493} and the result was also extended to
\emph{elliptic ensembles} allowing correlation between $x_{ij}$ and $x_{ji}$.
Later even \emph{products} of i.i.d.\ matrices were considered in~\cite{MR4125967}.
Alternatively, the moment method was used  in~\cite{MR2738319}
to prove CLT with polynomial test functions $f$.
Beyond analytic test functions, Nguyen and Vu in \cite{MR3161483} proved a CLT for $f=\log$ and $X$ with i.i.d. entries;
Tao and Vu in~\cite[Corollary 10]{MR3306005} proved CLT for the counting function on balls
even on mesoscopic scales assuming
the first four moments of \(X\) match those of the complex Ginibre ensemble. The
comparison method from Tao and Vu
was extended by Kopel~\cite[Corollary 1]{1510.02987} to general smooth test functions and also to real $X$
with an additional study on the real eigenvalues  (see also~\cite{MR3612267}).
Finally, the macroscopic CLT~\eqref{macroclt} in full generality,
i.e. for matrices with general i.i.d. entry distribution and
general smooth (in fact $H^{2+\delta}$) test functions, has been proven by us in~\cite{1912.04100}
and in~\cite{MR4235475} for the complex and real cases, respectively.
For more details on the history of the circular laws~\eqref{circ}-\eqref{localcirc} and the macroscopic CLT~\eqref{macroclt},
as well as several  further references, see~\cite{1912.04100, MR4235475}.

Apart from~\cite{MR2095933} that holds only  for complex Ginibre with special test functions and
apart from~\cite[Corollary 10]{MR3306005} that assumes four matching moments
and test  functions being the indicator functions of  mesoscopic balls, all previous CLT results
were on macroscopic scales. The extension to mesoscopic scales~\eqref{mesoclt}
is clearly not feasible with moment method or with methods relying on the analyticity of the test function.
Direct calculations based  upon the explicit Ginibre formulas
beyond the special test functions in~\cite{MR2095933, MR3306005}
may be possible also for mesoscopic scales
but their extension to the general i.i.d. case would again require four moment matching.

In this paper we demonstrate that our approach in~\cite{1912.04100, MR4235475}
can be extended to cover the entire mesoscopic regime. For simplicity,
we work with the complex  symmetry class, the real case requires additional
technical steps that we will explain in Remark~\ref{rmk:real} but do not carry out in details.
To highlight the novelty of the current proof, we briefly recall the main ideas in~\cite{1912.04100}.
The starting point is Girko's formula,
\begin{equation}\label{girko}
    \sum_{\sigma\in \Spec(X)} f(\sigma) = -\frac{1}{4\pi} \int_{\C } \Delta f(z)\int_0^\infty \Im \Tr  G^z(\ii\eta)\dif\eta \dif^2 z,
\end{equation}
expressing the linear statistics of the non-Hermitian eigenvalues of $X$
in terms of the resolvent  \(G^z(w):=  (H^z-w)^{-1}\) of the \emph{Hermitisation} $H^z$ of $X$,
\begin{equation}\label{eq:linz1}
    H^z:= \begin{pmatrix}
        0                & X-z \\
        X^*-\overline{z} & 0
    \end{pmatrix}
\end{equation}
parametrized by \(z\in \C  \).  Note that Girko's formula requires to understand the resolvent $G^z(\ii \eta)$ very well
for arbitrary small $\eta$, even for the macroscopic CLT.

The different $\eta$-regimes in~\eqref{girko} require very different methods.
\begin{description}[style=standard]
    \item[Sub-critical \(\eta\ll1/n\)] The absence of eigenvalues of $H^z$ in $[-\eta, \eta]$ is proved using smoothing inequalities for the lower tail of the lowest eigenvalue  \cite{MR2255338, MR2684367, MR4129729} (see also \cite{MR4474380, MR4408004, 2204.06026}). This regime is handled exactly as in the macroscopic case~\cite{1912.04100,MR4235475}.
    \item[Critical \(\eta\sim1/n\)] The asymptotic independence of resolvents for \(\abs{z_1-z_2}\gg 1/\sqrt{n}\) is proved dynamically using the Dyson Brownian motion (DBM) technique.
        Our  new result is the
        asymptotic orthogonality of the low-lying eigenvectors of $H^{z_1}$ and  $H^{z_2}$. Once this key input
        is established, the proof proceeds exactly as in~\cite{MR4235475} following
        the coupling  technique  and the homogenization idea that were first introduced in~\cite{MR3541852}  for  Wigner matrices, substantially generalized later in~\cite{MR3914908}
        for general DBM (see also \cite{MR4416591}),
        and adapted to $H^z$ in~\cite{MR3916329}. The almost orthogonality of eigenvectors implies
        the almost independence of the driving Brownian motions in the DBM for $\lambda^{z_1}$ and $\lambda^{z_2}$.
    \item[Super-critical \(\eta\gg1/n\)] A central limit theorem for resolvents is established using iterated cumulant expansions. The covariance of $\Tr G^{z_1}(\ii\eta_1)$ and $\Tr G^{z_2}(\ii\eta_2)$ for \emph{different} parameters $z_1\ne z_2$ depends critically on the product of resolvents $\Tr G^{z_1}(\ii\eta_1)G^{z_2}(\ii\eta_2)$ which we evaluate to high precision using our new \emph{multi-resolvent local laws}.
\end{description}

We now explain the novelty of the present work for the critical and super-critical regime in form of an improved multi-resolvent local law. The conventional single resolvent local law asserts that $G^z(\ii\eta)$ can be
approximated deterministically by an explicitly computable matrix $M^z(\ii\eta)$ (see~\eqref{Mz})
up to a negligible error as long as $|\eta| \gg 1/n$:
\begin{equation}\label{1G}
    \frac{1}{2n} \Big|\Tr \big[ G^z(\ii\eta)-M^z(\ii\eta)\big] \Big| \lesssim \frac{n^\xi}{n |\eta|}
\end{equation}
holds with very high probability for any fixed $\xi>0$.

While single resolvent local laws are well understood, their multi-resolvent
versions are much more subtle. The naive intuition from~\eqref{1G} would suggest
that $ G^{z_1}(\ii\eta_1)G^{z_2}(\ii\eta_2)\approx M^{z_1}(\ii\eta_1)M^{z_2}(\ii\eta_2) $,
but this is wrong. The correct deterministic approximation
of $ G^{z_1}(\ii\eta_1)G^{z_2}(\ii\eta_2)$  is $M_{12}:={\mathcal{B}}_{12}^{-1}[M^{z_1}(\ii\eta_1)M^{z_2}(\ii\eta_2) ]$,
where ${\mathcal{B}}_{12}$ is the \emph{stability operator}, given explicitly in~\eqref{stabop}.
This operator has a small eigenvalue $\beta$ of order $|z_1-z_2|^2+\eta_1+\eta_2$.
The key question is the error term in this approximation.

Setting $\eta_1=\eta_2=:\eta>0$ for simplicity,
one may guess (and we prove below) the bound
\begin{equation}\label{GGnaive}
    \frac{1}{2n}  \Big|\Tr\big[ G^{z_1}(\ii\eta)G^{z_2}(\ii\eta) - M_{12}\big] \Big| \lesssim \frac{n^\xi}{n\eta^2}.
\end{equation}
If $z_1=z_2$, then  this bound
is essentially optimal and in this case $M_{12}\sim 1/\eta$ since $\beta\sim \eta$.
However, when $f$ in~\eqref{girko} is mesoscopically supported, then typically
we have $|z_1-z_2|\sim n^{-a}\gg n^{-1/2}$ in the  calculation of the variance of~\eqref{girko}.
In this case $M_{12}\sim [ |z_1-z_2|^2+\eta]^{-1}$, i.e. for $\eta$'s such that $1/n\ll \eta\ll |z_1-z_2|^2\sim n^{-2a}$,
the bound on $M_{12}$ is already smaller than $1/\eta$. One therefore expects that the error term~\eqref{GGnaive}
also improves in this regime and indeed we need some improvement to handle the $\eta\gg 1/n$ regime when computing
higher moments of~\eqref{girko}. The main technical result in our proofs of the macroscopic CLT
was~\cite[Theorem 5.2]{1912.04100}, asserting that the error in~\eqref{GGnaive}
improves by a factor $n^{-\epsilon_1}$ if $|z_1-z_2|\ge n^{-\epsilon_2}$
for some small $\epsilon_1, \epsilon_2$. This improvement, however,  does not apply
to genuine mesoscopic scales when $|z_1-z_2|\sim n^{-a} \ll n^{-\epsilon_2}$.
In  Theorem~\ref{theo:impll} of this paper we present a substantial improvement
of~\cite[Theorem 5.2]{1912.04100}, essentially asserting that
the error in~\eqref{GGnaive}
can be improved by a factor $n^{-\epsilon_1}$ as long as $|z_1-z_2|\ge n^{-\frac{1}{2}+\epsilon_2}$,
i.e. the improvement is present in the entire mesoscopic regime.

We stress that~\eqref{GGnaive} even without any additional improvement
is new  in the mesoscopic regime and its proof is highly nontrivial.
Multi-resolvent local laws of the
form~\eqref{GGnaive} for products of resolvents $G(z)=(W-z)^{-1}$
of Wigner matrices $W$ at different spectral parameters
have been proven earlier
(see e.g. \cite[Theorem 3.4]{MR4372147}
and its refinements in~\cite[Proposition 3.4]{MR4334253}, \cite[Proposition 5.1]{2012.13218},
\cite[Theorem 2.5]{MR4479913}, and~\cite[Theorem 2.2]{2203.01861}).
However, these  proofs
rely on the fact that both resolvents stem from the \emph{same} Hermitian matrix
hence they have the same spectral resolution. This allowed us to use resolvent identities
to express products of resolvents in terms of their first powers and thus prevent the
instability of the operator ${\mathcal{B}}_{12}$ from influencing the error terms.
Practically, multi-resolvent local laws were reduced to well established single resolvent local laws
by algebraic identities\footnote{The analysis was more complicated
    when deterministic matrices $A_1, A_2, \ldots$
    were also present, i.e. we considered local laws for $G_1A_1G_2A_2\ldots$. In this case we split $A$'s into a diagonal
    and a traceless part; the former was handled by resolvent identities, while on traceless matrices
    the stability operator has bounded inverse. In nutshell, we still circumvented the instability
    of ${\mathcal{B}}_{12}^{-1}$ by resolvent identities in the Wigner case.}.
The spectral resolution of
$H^{z_1}$ and $H^{z_2}$, however, are different even though they are defined via
the same matrix $X$. Lacking  the convenient resolvent identity, the previous methods would involve  inverting
${\mathcal{B}}_{12}$ even along its unstable direction which would lead to
an error term that is  bigger than~\eqref{GGnaive} by a large
multiplicative factor  $|\beta|^{-1}$. This was still affordable in the proof of the macroscopic
CLT since in the typical regime we had $|z_1-z_2|\sim 1$, hence $|\beta|^{-1}\sim 1$ was harmless. The
mesoscopic regime requires a completely new approach, which allows us to deal with the unstable
direction of the stability operator in a novel way completely circumventing the resolvent identity.

The proof of our improved multi-resolvent local law relies on the \emph{characteristic flow method}
that has previously been used for single-resolvent local laws and closely related quantities in
various models in
\cite{MR4009708, MR4168391, 2105.01178, 2202.06714, 2204.03419, 2206.03029, MR4416591}.
To our best knowledge, this method has not been applied in a multi-resolvent setup before
with the exception of~\cite[Proposition 4.5]{2206.03029} where the product of
special time-evolved resolvents at two different times
was considered.

The key idea in the characteristic flow method
is to consider an Ornstein-Uhlenbeck flow $X_t$ with initial condition $X_0=X$ and follow
the time evolution of its Hermitised resolvent $G_t$, but at time evolved spectral parameters $z_t$ and $\eta_t$. These parameters satisfy a natural first order differential equation
(the characteristic flow equation) that is chosen so that the leading terms
in the flow $\Tr G_t^{z_{1,t}}(\ii \eta_{1, t})G_t^{z_{2,t}}(\ii \eta_{2, t})$ cancel out.
In particular, unlike previous results using the characteristic flow method, we consider a
    {\it matrix
        version of the characteristics} (see \eqref{eq:newmatchar})
for the first time in the random matrix setting, %
which we believe to be useful also for much more general random matrix ensembles.
Along this flow $\eta_t>0$ decreases.
In this way one can transfer a local law~\eqref{GGnaive} from large $\eta>0$ to
a similar local law for much smaller $\eta>0$ at the expense of adding a Gaussian component
to the entry distribution of $X$ but without an additional $|\beta|^{-1}$ factor,
see Proposition~\ref{pro:charaveinfin}.
Furthermore, when $\eta$  becomes smaller than the threshold $|z_1-z_2|^2$, one $\eta^{-1/2}$ factor
in the right hand side of~\eqref{GGnaive} switches to the better $|\beta|^{-1/2}\sim |z_1-z_2|^{-1}$
factor, explicitly bringing  in the improvement we were looking for.
Next, we need to remove the added Gaussian component by fairly standard Green function comparison
arguments, however maintaining the improved precision in the error term requires nontrivial extra work.
Technically we do this via a {\it recursive tandem}: we successively reduce $\eta$  by small steps
and we immediately  remove the Gaussian component before further reduction in $\eta$.
The regimes $\eta\gtrsim |z_1-z_2|^2$ and $\eta\lesssim |z_1-z_2|^2$ need
separate estimates.

\begin{remark} We mention that with a similar (in fact simpler) scheme
    it is possible to give an alternative short proof of the single resolvent local for $G^z(\ii\eta)$
    as well, see~\eqref{eq:singlegllaw} later.  The recursive  tandem of the characteristic flow
    and the GFT offers an alternative proof to many existing optimal local laws and it seems to be
    more powerful than previous methods in many situations. For  the sake of brevity of the current paper we
    refrain from demonstrating this idea for $G^z(\ii\eta)$ since the optimal local law in this case has already been
    established.
\end{remark}

The multi-resolvent local law~\cref{theo:impll}, the improved
version of the local law~\eqref{GGnaive}, is also the key novel input for the critical \(\eta\)-regime and implies
the almost orthogonality of eigenvectors ${\bm w}_i^z$ of $H^z$ in the entire mesoscopic regime:
\begin{equation}\label{orth}
    |\langle {\bm w_i}^{z_1}, {\bm w_j}^{z_2}\rangle|\le n^{-\epsilon_1}, \qquad |z_1-z_2|\gg n^{-1/2+\epsilon_2},
\end{equation}
for any $i, j$,
and a similar bound for the singular vectors of $X-z$,
see Lemma~\ref{lem:overb}.
The analogous
result was proven only for  $|z_1-z_2|\gg n^{-\epsilon}$ in~\cite{1912.04100, MR4235475}.

We remark that  the relation~\eqref{orth} together with the
closely related almost independence of the low lying eigenvalues $\lambda^{z_1}$ and
$\lambda^{z_2}$ are of substantial interest in themselves, independently of the proof
of the mesoscopic CLT. Since low lying eigenvalues of $H^z$ are intuitively
related to the eigenvalues of $X$ near $z$, the above relations indicate that
the local spectral behavior of $X$ near $z_1$ and $z_2$ are largely
independent as long as $\abs{z_1-z_2}\gg n^{-1/2}$, in other words the ``correlation length'' in the spectrum of $X$ is $n^{-1/2}$.
We stress that currently we do not know how to turn this intuition
into a rigorous proof for general i.i.d.\ ensemble since our results concern only the eigenvalues and eigenvectors
of the Hermitisation of  $H^z$, equivalently, the singular values and singular vectors of $X-z$
for any fixed (deterministic) $z$, and not directly the eigenvalues and eigenvectors of $X$.

We note that for Ginibre ensemble the  explicit two point eigenvalue correlation function
exhibits an exponential decay for  $|z_1-z_2|\gg n^{-1/2}$ and
the \emph{eigenvector overlap} of eigenvectors belonging
to two different eigenvalues of $X$ has been explicitly computed
in~\cite{MR3851824, MR4095019}, effectively
proving a polynomial decay of correlation of eigenvectors beyond the scale $n^{-1/2}$
in the spectrum of $X$. The extension of these
results to i.i.d. matrices remains an outstanding open problem, closely related to the unsolved
\emph{bulk universality conjecture}\footnote{The same universality at the edge of the spectrum along the unit circle
    has been
    proven in~\cite{MR4221653}.} for the local eigenvalue statistics of an i.i.d. matrix $X$,
which represents the non-Hermitian analogue of the celebrated Wigner-Dyson-Mehta universality
for Wigner matrices~\cite{MR2917064}.

\subsection*{Notations and conventions}
We introduce some notations we use throughout the paper. For integers \(k\in\N \) we use the
notation \([k]:= \{1,\dots, k\}\). We write \(\DD\subset \C\) for the open unit disk, and for any
\(z\in\C \) we use the notation \(\dif^2 z:= 2^{-1} \ii(\dif z\wedge \dif \overline{z})\) for the two
dimensional volume form on \(\C \). For positive quantities \(f,g\) we write \(f\lesssim g\) and \(f\sim g\)
if \(f \le C g\) or \(c g\le f\le Cg\), respectively, for some constants \(c,C>0\) which depend only on the
constants appearing in~\eqref{eq:hmb}  and on $a, \tau$ in~\eqref{eq:resctestf}.
For any two positive real numbers \(\omega_*,\omega^*\in\R _+\) by \(\omega_*\ll \omega^*\)
we denote that \(\omega_*\le c \omega^*\) for some small constant \(0<c\le 1/100\). We denote
vectors by bold-faced lower case Roman letters \({\bm x}, {\bm y}\in\C ^k\), for some \(k\in\N\).
Vector and matrix norms, \(\norm{\vx}\) and \(\norm{A}\), indicate the usual Euclidean norm and
the corresponding induced matrix norm.
For any \(d \times d\) matrix \(A\) we use the notation \(\braket{ A}:= \frac{1}{d}\Tr  A\) to denote the
normalized trace of \(A\). Moreover, for vectors \({\bm x}, {\bm y}\in\C ^n\) and matrices \(A,B\in \C ^{2n\times 2n}\) we define the scalar product
\[
    \braket{ {\bm x},{\bm y}}:= \sum_i \overline{x}_i y_i, \qquad \braket{ A,B}:= \braket{ A^*B}.
\]
For an open set $\Omega\subset \C$, by $H_0^2(\Omega)$ we denote the Sobolev space defined as the completion of the smooth compactly supported functions $C_c^\infty(\Omega)$ under the norm
\[
    \lVert f\rVert_{H_0^2(\Omega)}=\left(\int_\Omega \big|\nabla f(z)\big|^2\,\dif^2z\right)^{1/2}.
\] We will use the concept of ``with very high probability'' meaning that for any fixed \(D>0\) the probability of the event is bigger than \(1-n^{-D}\) if \(n\ge n_0(D)\). Moreover, we use the convention that \(\xi>0\) denotes an arbitrary
small exponent which is independent of \(n\). We introduce the notion of \emph{stochastic domination} (see e.g.~\cite{MR3068390}): given two families of non-negative random variables
\[
    X=\tuple*{ X^{(n)}(u) \given n\in\N, u\in U^{(n)} }\quad\text{and}\quad Y=\tuple*{ Y^{(n)}(u) \given n\in\N, u\in U^{(n)} }
\]
indexed by \(n\) (and possibly some parameter \(u\)  in some parameter space $U^{(n)}$),
we say that \(X\) is stochastically dominated by \(Y\), if for all \(\xi, D>0\) we have \begin{equation}\label{stochdom}
    \sup_{u\in U^{(n)}} \Prob\left[X^{(n)}(u)>n^\xi  Y^{(n)}(u)\right]\leq n^{-D}
\end{equation}
for large enough \(n\geq n_0(\xi,D)\). In this case we use the notation \(X\prec Y\) or \(X= \landauOprec*{Y}\).

\subsection*{Acknowledgement} The authors are grateful to Joscha Henheik for his
help with the formulas in Appendix~\ref{app:evect}.

\section{Main result}

Let $X$ be an $n\times n$ matrix with independent identically distributed  (i.i.d.) complex entries
such that \smash{\(x_{ab}\stackrel{{\rm d}}{=} n^{-1/2}\chi\)}
for some complex random variable \(\chi\), satisfying the following:
\begin{assumption}\label{ass:1}
    The random variable $\chi$ satisfies
    \(\E \chi=\E \chi^2=0\)
    and \(\E \abs{\chi}^2=1\).
    In addition we assume the existence of its high moments,
    i.e.\ that there exist constants \(C_p>0\), for any \(p\in\N \), such that
    \begin{equation}\label{eq:hmb}
        \E \abs{\chi}^p\le C_p.
    \end{equation}
\end{assumption}
We focus on the complex symmetry class, see Remark~\ref{rmk:real} for the necessary modifications
for the real case.

Denote by $\{\sigma_i\}_{i\in [n]}$ the eigenvalues of $X$, and consider the centered linear statistics
\begin{equation}
    \label{eq:linstat}
    L_n(f_{z_0,a}):=\sum_i f_{z_0,a}(\sigma_i)-\mathbf{E}\sum_i f_{z_0,a}(\sigma_i),
\end{equation}
with
\begin{equation}
    \label{eq:resctestf}
    f_{z_0,a}(z):=f\big(n^a(z-z_0)\big), \qquad a\in \left(0,\frac{1}{2}\right), \qquad |z_0|\le 1-\tau,
\end{equation}
for some small fixed $\tau>0$. Here $f\in H_0^{2}(\Omega)$ with
a compact set $\Omega\subset \mathbf{C}$. We already mentioned in the introduction
that the sum in $L_n(f_{z_0,a})$ contains roughly  $n^{1-2a}$
summands  but it is not normalized by $(n^{1-2a})^{-1/2}$ unlike in the standard CLT for independent summands.

The main result of this paper is the following Central Limit Theorem for all mesoscopic scales.

\begin{theorem}
    \label{theo:mesoclt}
    Let $X$ be an $n\times n$ matrix satisfying Assumption~\ref{ass:1}, fix a small $\tau>0$, and let $|z_0|\le 1-\tau$ and $a\in (0, \frac{1}{2})$. Let $f_{z_0,a}$ be defined as in \eqref{eq:resctestf}, with $f\in H_0^{2}(\Omega)$
    for some a compact set  $\Omega\subset \mathbf{C}$.
    Then $L_n(f_{z_0,a})$ converges (in the sense of moments\footnote{We say that a sequence of random variables $Y_n$ converges to $Y_\infty$ \emph{in the sense of moments}, $Y_n  \stackrel{\rm m}{\Longrightarrow} Y_\infty$,  if $\E |Y_n|^k=\E|Y_\infty|^k+\mathcal{O}(n^{-c(k)})$ for any $k\in\mathbf{N}$ for some small $c(k)>0$. Even if not stated explicitely, the implicit constant in $\mathcal{O}(\cdot)$ may depend on $k$.} and therefore in distribution) to a complex Gaussian random variable $L(f)$,
    $$
        L_n(f_{z_0,a}) \stackrel{\rm m}{\Longrightarrow} L(f),
    $$
    with expectation $\E L(f)=0$, and second moments
    $\E|L(f)|^2=C(f,f)$, $\E L(f)^2=C(\overline{f},f)$, where
    \begin{equation}\label{variance}
        C(g,f):=\frac{1}{4\pi}\braket{\nabla g,\nabla f}_{L^2(\Omega)}.
    \end{equation}
    Moreover, the expectation is given by
    \begin{equation}
        \label{eq:compexp}
        \begin{split}
            \mathbf{E}\sum_i f_{z_0,a}(\sigma_i)&=\frac{n^{1-2a}}{\pi}\int_\mathbf{C} f(z)\, \dif^2 z+\frac{1}{8\pi}\int_\mathbf{C}\Delta f(z)\,\dif^2z+\mathcal{O}\left(n^{-2a}+n^{-c}\right),
        \end{split}
    \end{equation}
    for some small fixed $c>0$. The implicit constant in $\mathcal{O}(\cdot)$ may depend on $\norm{\Delta f}_{L^2(\Omega)}$, and $|\Omega|$.

\end{theorem}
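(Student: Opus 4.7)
The plan is to establish moment convergence via Girko's formula~\eqref{girko} applied to $f_{z_0,a}$, following the architecture of the macroscopic proof in~\cite{1912.04100} but powered by the improved multi-resolvent local law of Theorem~\ref{theo:impll}. Applying~\eqref{girko} and substituting $\zeta=n^a(z-z_0)$ one obtains
\begin{equation*}
  L_n(f_{z_0,a}) = -\frac{1}{4\pi}\int_\C \Delta f(\zeta)\int_0^\infty \Im\,\un{\Tr G^{z_0+n^{-a}\zeta}(\ii\eta)}\,\dif\eta\,\dif^2\zeta,
\end{equation*}
where $\un{\Tr G^z(\ii\eta)}:=\Tr G^z(\ii\eta)-\E\Tr G^z(\ii\eta)$. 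Convergence in the sense of moments then reduces to computing $\E L_n^k\overline{L_n}^\ell$ for each fixed $k,\ell$ and matching the result with the Wick-pairing formula for a complex Gaussian with covariance $C$.

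For the expectation formula~\eqref{eq:compexp} I would insert into the uncentered Girko identity the single-resolvent local law~\eqref{1G} together with the universal $1/n$-refinement of $\E\braket{G^z(\ii\eta)}$ around $\braket{M^z(\ii\eta)}$. The $\eta$-integral of $\Im\braket{M^z(\ii\eta)}$ reproduces the circular-law density on $\DD$ and yields the leading term $n^{1-2a}\pi^{-1}\int f$; the deterministic $1/n$-correction integrated against $\Delta f$ produces the shift $\frac{1}{8\pi}\int\Delta f$; the $\mathcal O(n^{-2a})$ remainder records the distance from $z_0$ to $\partial\DD$; and the $\mathcal O(n^{-c})$ term reflects the precision of the local law together with the subleading cumulant terms. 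The subcritical range $\eta\le n^{-1-\delta}$ is discarded using the same tail bounds on the smallest singular value of $H^z$ employed in~\cite{1912.04100}.

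The core of the proof is the second moment, i.e.\ the double integral of the covariance $\E\,\Im\un{\Tr G^{z_1}(\ii\eta_1)}\,\Im\un{\Tr G^{z_2}(\ii\eta_2)}$ against $\Delta f(\zeta_1)\Delta f(\zeta_2)\,\dif\zeta_1\,\dif\zeta_2$ with $z_j=z_0+n^{-a}\zeta_j$. I would split the $(\eta_1,\eta_2)$-integration into the three regimes of the introduction. In the super-critical range $\eta_j\gg 1/n$ an iterated cumulant expansion reduces the covariance to $\braket{M_{12}}-\braket{M^{z_1}}\braket{M^{z_2}}$ plus a remainder governed by Theorem~\ref{theo:impll}; the $n^{-\epsilon_1}$ improvement in~\eqref{GGnaive} for $|z_1-z_2|\gg n^{-1/2+\epsilon_2}$ is decisive, since without it the remainder would overwhelm the main term after $\eta$-integration once $|z_1-z_2|$ descends to the minimal mesoscopic separation $n^{-a}$. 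In the critical regime $\eta\sim 1/n$ the DBM/homogenization machinery of~\cite{MR3541852, MR3914908} adapted to $H^z$ in~\cite{MR3916329} is run with the almost-orthogonality~\eqref{orth} of low-lying eigenvectors as input, itself a consequence of Theorem~\ref{theo:impll} through Lemma~\ref{lem:overb}; this yields asymptotic independence of the driving Brownian motions of the $\lambda^{z_1}$- and $\lambda^{z_2}$-DBMs and hence factorization of the covariance up to $n^{-c}$. Summing the three contributions and carrying out the explicit Gaussian integration in $(\zeta_1,\zeta_2,\eta_1,\eta_2)$ produces the announced covariance $C(f,f)=\norm{\nabla f}^2/(4\pi)$.

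Higher moments are handled by iterating the same cumulant-expansion scheme: joint cumulants of $\un{\Tr G^{z_j}(\ii\eta_j)}$ of order $\ge 3$ are estimated to contribute $\mathcal O(n^{-c})$ after integration in all variables, so only pair contractions survive and exactly reproduce the Wick formula for the complex Gaussian $L(f)$. The hardest step throughout, and the one that genuinely requires the new apparatus, is controlling the super-critical regime uniformly down to $|z_1-z_2|\sim n^{-a}$: the naive bound~\eqref{GGnaive} is only marginally integrable there and its $\eta$-integral against the rescaled $\Delta f$ produced by the change of variables would be of the same order as the main term, so no Gaussian limit can emerge without the $n^{-\epsilon_1}$ gain of Theorem~\ref{theo:impll} and the ensuing eigenvector orthogonality~\eqref{orth}. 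Once those two inputs are available, the remaining arguments are a careful mesoscopic reworking of the macroscopic proof of~\cite{1912.04100}.
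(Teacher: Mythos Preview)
Your proposal is correct and follows essentially the same architecture as the paper's proof: Girko's formula is split into sub-critical, critical, and super-critical $\eta$-regimes, the first handled by smallest-singular-value bounds, the second by the DBM argument fed with the eigenvector almost-orthogonality~\eqref{orth} (Theorem~\ref{lem:overb}), and the third by the resolvent CLT (Proposition~\ref{prop:CLTresm}) whose proof rests on the improved two-resolvent local law of Theorem~\ref{theo:impll}. The paper packages these inputs through Lemmas~\ref{lem:firstappr} and~\ref{lem:detterm} and then evaluates the explicit $(\eta_i,\eta_j)$- and $(z_i,z_j)$-integrals of $V_{i,j}+\kappa_4 U_iU_j$ to obtain the covariance~\eqref{variance}, exactly as you outline.
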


\begin{remark}~
    \begin{enumerate}[label=(\roman*)]
        \item In Theorem~\ref{theo:mesoclt} we stated the CLT for the mesoscopic regime $a\in (0,1/2)$.
              The complementary regime $a\in [0,\epsilon]$, for some small fixed $\epsilon>0$, was already
              covered in \cite[Remark 2.6]{1912.04100} (in fact that proof also holds on any
              almost macroscopic scale where the power function
              $n^a$ in~\eqref{eq:resctestf} is replaced with any sequence  $C_n\to \infty$, $C_n\le n^\epsilon$).
        \item Note that in the macroscopic regime, $a=0$,
              both asymptotic variance and expectation of \(L_n(f)\) depend on the fourth
              cumulant \(\kappa_4=\kappa_4(\chi)=\E\abs{\chi}^4-2(\E\abs{\chi}^2)^2\) of the entry
              distribution, c.f.~\cite[Theorem 2.2]{1912.04100}. The fact that in the mesoscopic regime the
              CLT only depends on the first two moments of $\chi$ is yet another manifestation of the effect that
              local properties of the spectrum are more universal than global properties.
    \end{enumerate}
\end{remark}

\begin{remark}[Gaussian Free Field]
    We recall from~\cite[Section 2.1]{1912.04100} that in the macroscopic case $a=0$ for \(\kappa_4\ge0\) the limiting Gaussian process \(L\) could be interpreted as
    \begin{equation}\label{L}
        L = \frac{1}{\sqrt{4\pi}} Ph + \sqrt{\kappa_4} (\braket{\cdot }_\DD-\braket{\cdot}_{\partial\DD})\Xi,
    \end{equation}
    where \(Ph\) is the projection of the Gaussian free field (GFF) \(h\) on some bounded domain \(\Omega\supset\DD\) conditioned to be harmonic in the complement \(\DD^c\) of the unit disk \(\DD\), \(\Xi\) is a standard real Gaussian
    variable, independent of $h$,  and \(\braket{\cdot}_{\DD},\braket{\cdot}_{\partial\DD}\)
    denote the averaging functionals on \(\DD,\partial\DD\).

    By Theorem~\ref{theo:mesoclt} we can now conclude that in the bulk on
    all mesoscopic scales \(0<a<1/2\),
    \begin{equation}
        L = \frac{1}{\sqrt{4\pi}} h
    \end{equation}
    for some \(h\) in the equivalence class of the whole-plane GFF
    (which is only defined modulo additive constants), see e.g.~\cite[Section 2.2.1]{1302.4738}.
    Compared to the macroscopic case, however, this interpretation is also valid for \(\kappa_4<0\).
\end{remark}

\begin{remark}
    By polarization we  also conclude a multivariate CLT.
    In fact, our proof actually gives the joint Gaussianity of
    $$
        L_n(f_{z_{1},a_1}^{(1)}), L_n(f_{z_{2},a_2}^{(2)}), \ldots,   L_n(f_{z_{p},a_p}^{(p)}), \qquad p\in \N,
    $$
    for the rescaled versions $f_{z_{i},a_i}^{(i)}(z): = f^{(i)}(n^{a_i}(z-z_{i}))$ of
    different test functions $f^{(i)}\in H_0^{2}(\Omega)$ around fixed reference points $z_{i}$
    with $|z_{i}|\le 1-\tau$ and scales $a_i\in (0,1/2)$ that are not necessarily equal.
    In particular, from our proof below (especially around~\eqref{ff}) one can easily see that $\E L_n(f_{z_{i},a_i}^{(i)})\overline{L_n(f_{z_{j},a_j}^{(j)}})$
    can be non-zero (in the limit $n\to\infty$) only if $a_i=a_j$, $z_{i}= z_{j}$ and
    $\mathrm{supp}(f^{(i)})\cap \mathrm{supp}(f^{(j)})\ne \emptyset$.
\end{remark}

\begin{remark}
    \label{rem:opt}
    In Theorem~\ref{theo:mesoclt} we assumed $f\in H^2_0(\Omega)$ for simplicity, but in fact the somewhat weaker
    regularity  condition of the form $\mbox{supp}(f)\subset \Omega$ with
    $\Delta f\in L^{1+\delta}(\Omega)$ for any fixed $\delta>0$ and compact $\Omega$  suffices.
    By the Calderon-Zygmund inequality, this condition implies $f\in W^{2, 1+\delta}_0(\Omega)$, hence $f\in H^1(\Omega)$
    as well, and these conditions are sufficient for our proof.
    Note that the assumption $f\in H^1(\Omega)$ is necessary to make sure that the variance in \eqref{variance} is finite; in particular, this implies that our regularity condition on $f$ is close to being  optimal.
\end{remark}

\section{Proof strategy}

To analyze the linear statistics in \eqref{eq:linstat}
for the test function $f_{z_0,a}$ given in \eqref{eq:resctestf},
we rely on Girko's formula \cite{MR773436} in the form used in \cite{MR3306005}:
\begin{equation}
    \label{eq:splitingirko}
    \begin{split}
        L_n(f_{z_0,a})&=\frac{1}{4\pi}\int \Delta f_{z_0,a}(z) \left[\log\big| \mathrm{det}(H^z-\ii T)\big|+
            \ii\left(\int_0^{\eta_0}+\int_{\eta_0}^{\eta_c}+\int_{\eta_c}^T\right) \bigg[\mathrm{Tr}[G^z(\ii\eta)]-\E\mathrm{Tr}[G^z(\ii\eta)]\bigg]\,\dif \eta\right]\,\dif^2 z \\
        &=: J_T(f_{z_0,a})+I_{0}^{\eta_0}(f_{z_0,a})+I_{\eta_0}^{\eta_c}(f_{z_0,a})+I_{\eta_c}^T(f_{z_0,a}),
    \end{split}
\end{equation}
where we choose the integration  thresholds
\[
    \eta_0:=n^{-1-\delta_0},\qquad \eta_c:=n^{-1+\delta_1},
\]
for some small $\delta_0,\delta_1>0$ and we set $T=n^{100}$.
We recall the Hermitisation $H^z$ of $X-z$ given by
\begin{equation}
    \label{eq:herm}
    H^z:=\left(\begin{matrix}
            0       & X-z \\
            (X-z)^* & 0
        \end{matrix}\right),
\end{equation}
and its resolvent $G^z(w):=(H^z-w)^{-1}$, with $w\in\mathbf{C}\setminus\mathbf{R}$.
By spectral symmetry, the eigenvalues of $H^z$ come in opposite pairs, so we label them
as $\{ \lambda^z_{\pm i}\}_{i\in [n]}$ where $\lambda_{-i}^z=-\lambda_{i}^z$. The corresponding
orthonormal eigenvectors \(\{{\bm w}^z_{\pm i}\}_{i\in [n]}\) consequently decompose into two $n$-vectors
with symmetry \(\{{\bm w}^z_{\pm i}\}_{i\in [n]}=\{({\bm u}_i^z,\pm {\bm v}_i^z)\}_{i\in [n]}\) and
with $\| {\bm u}_i^z\|^2=\| {\bm v}_i^z\|^2=1/2$.

We have split \eqref{eq:splitingirko} into the sum of four terms and each of them will be analyzed
using different techniques. The term $J_T$ will easily be estimated as in \cite[Proof of Theorem 2.3]{MR4408013},
whilst the  fact that $I_0^{\eta_0}$ is negligible will follow by smoothing inequalities for the smallest singular
value of $X-z$ (see \cite[Theorem 3.2]{MR2684367}). To estimate $I_{\eta_0}^{\eta_c}$, which is the regime
when $\eta$ is proportional to the level spacing of the eigenvalues of $H^z$ around zero,
we will need the asymptotic independence of $\mathrm{Tr} G^{z_1}$ and
$\mathrm{Tr} G^{z_2}$ for $|z_1-z_2|\gg n^{-1/2}$ (see Proposition~\ref{prop:indmr} below).
The proof of this proposition is analogous to \cite[Section 7]{1912.04100} relying on the Dyson Brownian motion,
once the following asymptotic orthogonality of the singular vectors of $X-z_1$ and $X-z_2$ is proven.

\begin{theorem}
    \label{lem:overb}
    Let \(\{{\bm w}^{z_l}_{\pm i}\}_{i=1}^n=\{({\bm u}_i^{z_l},\pm {\bm v}_i^{z_l})\}_{i=1}^n\)
    be the eigenvectors of  \(H^{z_l}\) for  \(l=1,2\). Then for any sufficiently small \(\omega_d, \omega_p>0\) there exist \(\omega_B, \omega_E>0\) such that if \(n^{-1/2+\omega_p}\le\abs{z_1-z_2}\le n^{-\omega_d}\), then
    \begin{equation}
        \label{eq:bbev}
        \abs*{\braket{ {\bm u}_i^{z_1}, {\bm u}_j^{z_2}}}+\abs*{\braket{ {\bm v}_i^{z_1}, {\bm v}_j^{z_2}}}\le n^{-\omega_E}, \quad 1\le i,j\le n^{\omega_B},
    \end{equation}
    with very high probability.
\end{theorem}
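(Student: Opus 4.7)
The plan is to extract the eigenvector overlaps from a product of imaginary Hermitized resolvents via spectral decomposition and to control the corresponding trace by invoking the improved multi-resolvent local law \cref{theo:impll}. The strategy is familiar from the Hermitian theory, but is only viable here because the $n^{-\epsilon_1}$ improvement brings the multi-resolvent error below the leading deterministic contribution throughout the mesoscopic regime $n^{-1/2+\omega_p}\le\abs{z_1-z_2}\le n^{-\omega_d}$.

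Concretely, from the spectral resolution
\[
\Im G^z(\ii\eta)=\sum_k \frac{\eta}{(\lambda_k^z)^2+\eta^2}\,\bm w_k^z(\bm w_k^z)^*
\]
one reads off the non-negative expansion
\[
\Tr\bigl[\Im G^{z_1}(\ii\eta)\,\Im G^{z_2}(\ii\eta)\bigr]=\sum_{i,j}\frac{\eta^2\,\abs{\braket{\bm w_i^{z_1},\bm w_j^{z_2}}}^2}{((\lambda_i^{z_1})^2+\eta^2)((\lambda_j^{z_2})^2+\eta^2)}.
\]
I fix $\eta:=n^{-1+\omega_\eta}$ with $\omega_B\ll\omega_\eta\ll \min(\omega_p,\epsilon_1)$. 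Eigenvalue rigidity for $H^z$ in the bulk, a standard consequence of the single-resolvent local law, yields $\abs{\lambda_k^z}\lesssim \abs k/n\le n^{\omega_B-1}\le\eta$ with very high probability for $\abs k\le n^{\omega_B}$. Each prefactor above is then at least $1/(2\eta)$ on that index range, so discarding the complementary indices produces the lower bound
\[
\frac{1}{4\eta^2}\sum_{\abs i,\abs j\le n^{\omega_B}}\abs{\braket{\bm w_i^{z_1},\bm w_j^{z_2}}}^2\le \Tr\bigl[\Im G^{z_1}(\ii\eta)\,\Im G^{z_2}(\ii\eta)\bigr].
\]

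For the matching upper bound I decompose $\Im G^z(\ii\eta)=(G^z(\ii\eta)-G^z(-\ii\eta))/(2\ii)$ and apply \cref{theo:impll} to each of the four resulting traces $\Tr[G^{z_1}(\pm\ii\eta)G^{z_2}(\pm\ii\eta)]$. The deterministic leading term $\Tr M_{12}$ is of size $\lesssim n/(\abs{z_1-z_2}^2+\eta)\lesssim n/\abs{z_1-z_2}^2$ because the stability operator $\mathcal B_{12}$ has smallest eigenvalue of order $\abs{z_1-z_2}^2+\eta$, whereas the stochastic error is $\prec n^{-\epsilon_1}/\eta^2$ throughout the regime $\abs{z_1-z_2}\ge n^{-1/2+\omega_p}$. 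Multiplying the resulting bound on the trace by $\eta^2 = n^{-2+2\omega_\eta}$ yields
\[
\sum_{\abs i,\abs j\le n^{\omega_B}}\abs{\braket{\bm w_i^{z_1},\bm w_j^{z_2}}}^2\prec n^{-2(\omega_p-\omega_\eta)}+n^{-\epsilon_1},
\]
which is $\le n^{-2\omega_E}$ for any $\omega_E$ strictly smaller than $\min(\omega_p-\omega_\eta,\epsilon_1/2)$. Therefore $\abs{\braket{\bm w_i^{z_1},\bm w_j^{z_2}}}\le n^{-\omega_E}$ for all $\abs i,\abs j\le n^{\omega_B}$, and from the identities
\[
2\braket{\bm u_i^{z_1},\bm u_j^{z_2}}=\braket{\bm w_i^{z_1},\bm w_j^{z_2}}+\braket{\bm w_i^{z_1},\bm w_{-j}^{z_2}},\quad 2\braket{\bm v_i^{z_1},\bm v_j^{z_2}}=\braket{\bm w_i^{z_1},\bm w_j^{z_2}}-\braket{\bm w_i^{z_1},\bm w_{-j}^{z_2}},
\]
the desired bound \cref{eq:bbev} follows by the triangle inequality.

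The only substantive obstacle is the upper bound in the previous paragraph: without the $n^{-\epsilon_1}$ improvement in the multi-resolvent local law, the naive error $\prec 1/\eta^2$ at the near-critical scale $\eta\sim n^{-1}$ would contribute a term of order $1$ on the right-hand side and supply no information about the overlaps. Thus the entire proof is reduced to \cref{theo:impll}, whose derivation via the characteristic-flow/GFT tandem sketched in the introduction constitutes the main technical novelty of the paper.
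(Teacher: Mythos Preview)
The proposal is correct and follows essentially the same route as the paper's proof: both bound the overlaps by $\eta^2\Tr[\Im G^{z_1}\Im G^{z_2}]$ via spectral decomposition and rigidity, then control this trace with \cref{theo:impll}. The only differences are cosmetic---you work with the full eigenvectors $\bm w_{\pm i}$ and recover $\bm u,\bm v$ at the end, while the paper uses the block form of $\Im G$ directly; and the paper picks $\eta=n^{-1/2-\delta}|z_1-z_2|$ rather than a fixed $\eta=n^{-1+\omega_\eta}$, but either choice works once one substitutes the explicit error terms of \cref{theo:impll} (your shorthand ``$n^{-\epsilon_1}/\eta^2$'' being the informal summary from the introduction rather than the precise bound, so in practice the effective $\epsilon_1$ is determined by $\omega_\eta$ and $\omega_p$, not imposed in advance).
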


The proof of this theorem is presented in Section~\ref{app:ind};
it will be a simple consequence of  the new two--resolvent local law for
$\Tr G^{z_1}(\ii \eta_1) G^{z_2}(\ii \eta_2)$ given in  Theorem~\ref{theo:impll} below.
Finally, the leading contribution to $L_n(f)$ comes  from the regime $I_{\eta_c}^T$,
which thus needs to be computed more precisely by identifying all its moments. This is
done first for the resolvent  in Proposition~\ref{prop:CLTresm} and later we extend it
to the $z-$ and $\eta$-integrals of the resolvent according to Girko's formula.
The fundamental input to prove Proposition~\ref{prop:CLTresm} will be again our new local law from Theorem~\ref{theo:impll}.

Before stating the new two--resolvent local law for
$\mathrm{Tr}G^{z_1}G^{z_2}$ we recall the local law for a single resolvent.
In the regime $|\Im w|\gg 1/n$ the resolvent $G^z(w)$ has a deterministic leading term.
This deterministic approximation is given by
\begin{equation}
    \label{Mz}
    M^z(w):=\left(\begin{matrix}
            m^z(w)              & -zu^z(w) \\
            -\overline{z}u^z(w) & m^z(w)
        \end{matrix}\right), \qquad u^z(w):=\frac{w}{w+m^z(w)},
\end{equation}
with $m^z$ being the unique solution of the scalar equation, with a side condition,
\begin{equation}
    \label{eq:MDEscal}
    -\frac{1}{m^z(w)}=w+m^z(w)-\frac{|z|^2}{w+m^z(w)}, \qquad \Im m^z(w)\Im w>0.
\end{equation}
Here $M^z=M^z(w)$ is a $2n\times 2n$ \emph{block constant} matrix, i.e. it has a $2\times 2$ block structure with each block being a constant multiple of the $n\times n$ identity. We remark that throughout this paper by $2\times 2$ block matrices we always refer to $2n\times 2n$ matrices which  consists of four $n\times n$ blocks. Furthermore, we will say that a matrix $A\in\C^{2n \times 2n}$ is block traceless if it is a $2\times 2$ block matrix such that the trace of each of its blocks is equal to zero. We will work on the imaginary axis, $\Re w=0$, as required in Girko's formula. More precisely, we have the following \emph{average} and \emph{isotropic} local laws from \cite{MR3770875}
\begin{equation}
    \label{eq:singlegllaw}
    \abs{\braket{A(G^z(\ii\eta)-M^z(\ii\eta))}} \prec \frac{\norm{A}}{n\abs{\eta}}, \qquad  \abs{\braket{\vx,(G^z(\ii\eta)-M^z(\ii\eta))\vy}} \prec \frac{\norm{\vx}\norm{\vy}}{\sqrt{n\abs{\eta}}}
\end{equation}
for any deterministic matrix and vectors \(A,\vx,\vy\), uniformly in $|\eta|\ge n^{-1+\epsilon}$ and $|z|\le 1-\tau$.  Furthermore, using trivial computations, along the imaginary axis we have the expansion for $m^z, u^z$:
\begin{lemma}
    Fix small $\eta,\tau>0$, $z$ such that $|z|\le 1-\tau$, let $m^z(\ii\eta)$ be the unique solution of \eqref{eq:MDEscal}, and let $u^z(\ii\eta)$ be defined as in \eqref{Mz}. Then, we have
    \begin{equation}
        \label{eq:expsmalleta}
        \begin{split}
            m^z(\ii\eta)&=\ii \sqrt{1-|z|^2}+\ii\eta\frac{2|z|^2-1}{2(1-|z|^2)}+\mathcal{O}(\eta^2), \\
            u^z(\ii\eta)&=1-\frac{\eta}{\sqrt{1-|z|^2}}+\mathcal{O}(\eta^2).
        \end{split}
    \end{equation}
    The implicit constant in $\mathcal{O}(\cdot)$ depends on $\tau$.
\end{lemma}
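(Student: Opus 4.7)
The plan is to reduce the lemma to elementary calculus on the polynomial form of the MDE. Multiplying the scalar equation~\eqref{eq:MDEscal} by \(m^z(w)(w+m^z(w))\) yields the polynomial constraint
\[
 F(m,w) := m(w+m)^2 + (w+m) - \abs{z}^2 m = 0,
\]
which is equivalent to~\eqref{eq:MDEscal} away from the singular locus \(m=0\), \(w+m=0\). Setting \(w=0\) gives \(m(m^2+1-\abs{z}^2)=0\). Discarding the spurious root \(m=0\) (it does not solve the original~\eqref{eq:MDEscal}) and using the side condition \(\Im m^z(\ii\eta)>0\) for \(\eta>0\), continuity at \(\eta=0^+\) selects the root \(m^z(0)=\ii\sqrt{1-\abs{z}^2}\). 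Note that for \(\abs{z}\le 1-\tau\) this value is bounded away from zero, which ensures we stay in the regime where \(F=0\) is equivalent to~\eqref{eq:MDEscal}.

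Next I would invoke the implicit function theorem. The partial derivative
\[
 F_m(m_0,0) = 3m_0^2 + 1 - \abs{z}^2 = -2(1-\abs{z}^2)
\]
is nonzero, and bounded away from zero uniformly in \(\abs{z}\le 1-\tau\). Hence \(w\mapsto m^z(w)\) is real-analytic in a neighbourhood of \(w=0\) whose size depends only on \(\tau\), and all implicit \(\mathcal{O}(\cdot)\) constants inherit that uniformity. Implicit differentiation gives
\[
 m'(0) \;=\; -\frac{F_w(m_0,0)}{F_m(m_0,0)} \;=\; -\frac{2m_0^2+1}{-2(1-\abs{z}^2)} \;=\; \frac{2\abs{z}^2-1}{2(1-\abs{z}^2)},
\]
using \(m_0^2=\abs{z}^2-1\). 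Since \(w=\ii\eta\), chain-rule gives the first claim of~\eqref{eq:expsmalleta} with an \(\mathcal{O}(\eta^2)\) remainder.

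For the second claim I would simply insert the expansion of \(m^z(\ii\eta)\) into the definition of \(u^z\). Using \(m_0=\ii\sqrt{1-\abs{z}^2}\) and the analyticity noted above, a one-step geometric expansion of \(1/(w+m^z(w))\) around \(w=0\) yields the stated form, with the linear coefficient \(-1/\sqrt{1-\abs{z}^2}\) and an \(\mathcal{O}(\eta^2)\) remainder whose implicit constant again depends only on \(\tau\).

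There is no real obstacle; the only subtlety worth flagging is choosing the correct root at \(w=0\) and verifying that \(F_m\) does not vanish on the bulk regime \(\abs{z}\le 1-\tau\), both of which are immediate once written out. No spectral input beyond the scalar MDE and its side condition is used.
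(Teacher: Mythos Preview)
Your argument is correct; the paper does not give a proof at all beyond the phrase ``using trivial computations,'' and your implicit-function-theorem computation is precisely the kind of direct calculation the authors had in mind. One minor remark: the definition of \(u^z\) displayed in~\eqref{Mz} appears to contain a typo (it should be \(m^z/(w+m^z)\), consistent with \(u^z(0)=1\) and the later uses of \(u\) in the paper), but your expansion is for the correct quantity.
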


The deterministic approximation $M^z$ comes from the unique solution of the \emph{matrix Dyson equation}
$$
    - [M^z(w)]^{-1} = \begin{pmatrix} w & z \cr \bar z & w \end{pmatrix} + \mathcal{S}[M^z(w)], \qquad w\in \C\setminus \R
$$
with the side condition $(\Im w)\Im M^z(w)>0$. Here $\mathcal{S}$ is the \emph{covariance operator}
which acts on any matrix $R\in\mathbf{C}^{2n\times 2n}$  as\footnote{Here we recall the convention that for any $A\in\C^{d\times d}$ we use the notation $\braket{A}=d^{-1}\mathrm{Tr}[A]$. For example, $\braket{R}=(2n)^{-1}\mathrm{Tr}[R]$ and $\braket{R_{11}}=n^{-1}\mathrm{Tr}[R_{11}]$.}
\begin{equation}\label{defS}
    \mathcal{S}[R]:=\left( \begin{matrix}
            \braket{R_{22}} & 0 \\ 0 & \braket{R_{11}}
        \end{matrix} \right) \quad \text{i.e.} \quad \mathcal{S}[R]=2\braket{RE_2}E_1+2\braket{RE_1}E_2=\braket{R}-\braket{RE_-}E_-,
\end{equation}
where $R_{ij}$, with $i,j\in [2]$ are the four blocks of $R$.
In the second formula we expressed $\mathcal{S}$ in a basis representation, where
we set $E_-:=E_1-E_2$ and we
defined the \emph{ $2\times 2$ \emph{block constant matrices}}
\begin{equation}
    \label{eq:defe1e2f}
    E_1:=\left(\begin{matrix}
            1 & 0 \\
            0 & 0
        \end{matrix}\right), \qquad E_2:=\left(\begin{matrix}
            0 & 0 \\
            0 & 1
        \end{matrix}\right), \qquad F:=\left(\begin{matrix}
            0 & 0 \\
            1 & 0
        \end{matrix}\right).
\end{equation}
Note that $\{\sqrt{2} E_1,\sqrt{2}E_2,\sqrt{2}F,\sqrt{2}F^*\}$
is an orthonormal basis of the $2\times 2$ block constant matrices.

The \emph{averaged (tracial) version} of  our key  two--resolvent local contained in
the following theorem.

\begin{theorem}
    \label{theo:impll}
    Fix small $\epsilon,\tau,\omega_d>0$, independent of $n$  and  let
    $z_1,z_2\in \mathbf{C}$ such that $|z_i|\le 1-\tau$ and $|z_1-z_2|\le n^{-\omega_d}$, then for any deterministic matrices $A, B$ it holds
    \begin{equation}
        \label{eq:goodll}
        \big|\braket{(G^{z_1}(\ii\eta_1)AG^{z_2}(\ii\eta_2)-M_{12}^A)B}\big|\prec \frac{1}{n\eta_*^{3/2}(\eta^*)^{1/2}} \left(\eta_*^{1/6}+n^{-1/10}+\frac{1}{\sqrt{n\eta_*}}+\left(\frac{\eta^*}{\eta^*+|z_1-z_2|^2}\right)^{1/4}\right),
    \end{equation}
    with
    \begin{equation}
        \label{eq:defM12A}
        M_{12}^A=M_{12}^A(z_1,\ii\eta_1,z_2,\ii\eta_2):=(1-M_1\mathcal{S}[\cdot]M_2)^{-1}[M_1AM_2], \qquad M_j := M^{z_j}(\ii \eta_j), \quad j=1,2,
    \end{equation}
    and $\eta_*:=|\eta_1|\wedge|\eta_2|$, $\eta^*:=|\eta_1|\vee|\eta_2|$.
    The bound in \eqref{eq:goodll} holds uniformly for any
    matrices $A,B$ with $\norm{A}+\norm{B}\lesssim 1$
    and for $\eta_*\ge n^{-1+\epsilon}$. Additionally, for the deterministic term in \eqref{eq:goodll}
    we have the bound
    \begin{equation}
        \lVert M_{12}^A\rVert\lesssim \frac{1}{|z_1-z_2|^2+\eta^*}.
    \end{equation}
\end{theorem}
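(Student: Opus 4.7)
The plan is to combine a dynamical \emph{characteristic flow} argument with a \emph{Green function comparison} (GFT) step in an alternating ``recursive tandem''. The main conceptual issue is that the stability operator $\mathcal{B}_{12}:=1-M_1\mathcal{S}[\cdot]M_2$ has an unstable direction whose eigenvalue $\beta$ is of order $|z_1-z_2|^2+\eta^*$; a naive inversion would cost $|\beta|^{-1}$, which is unaffordable in the mesoscopic regime. The whole scheme is designed so that this unstable mode is absorbed by the flow dynamics rather than by algebraic inversion, completely bypassing the Wigner-style resolvent identity that fails here because $H^{z_1}$ and $H^{z_2}$ have different spectral resolutions.

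I would start from the regime $\eta^*\sim 1$, where \eqref{eq:goodll} follows from the single-resolvent local law \eqref{eq:singlegllaw} together with a short resolvent expansion organized by the covariance operator $\mathcal{S}$. The propagation to small $\eta$ is then carried out by the Ornstein--Uhlenbeck flow $\dif X_t = -\tfrac12 X_t\,\dif t + \dif B_t$, $X_0=X$, under which $X_t$ preserves the covariance of $X$ while adding a Gaussian component. Applying Ito to the natural observable built from $\braket{(G_t^{z_{1,t}}(\ii\eta_{1,t}) A G_t^{z_{2,t}}(\ii\eta_{2,t}) - M_{12}^A)B}$, with the deterministic approximation evaluated along the characteristics, produces a drift involving the quadratic-in-$G$ terms generated by $\mathcal{S}$. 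The matrix-valued characteristic ODE for $(z_{j,t},\eta_{j,t})$ announced in \eqref{eq:newmatchar} is engineered so that the variation of $M_{12}^A$ along characteristics \emph{exactly} cancels those drift terms; this dynamical cancellation plays the role of inverting $\mathcal{B}_{12}$ without paying the $|\beta|^{-1}$ cost.

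Along the flow $\eta_{j,t}$ decreases, so a Gronwall-type argument driven by the martingale part and the remaining uncancelled pieces propagates the local law from $\eta\sim 1$ down to a target $\eta$ in a time $t\sim\eta$. The key gain is that the deterministic approximation satisfies $\norm{M_{12}^A}\lesssim(|z_1-z_2|^2+\eta^*)^{-1}$, so in the regime $\eta^*\lesssim|z_1-z_2|^2$ one factor of $\eta_*^{-1/2}$ in the naive martingale bound is replaced by $|\beta|^{-1/2}\sim|z_1-z_2|^{-1}$; this is precisely the origin of the improvement factor $\bigl(\eta^*/(\eta^*+|z_1-z_2|^2)\bigr)^{1/4}$ in \eqref{eq:goodll}. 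The added Gaussian component is subsequently removed by a standard Lindeberg/cumulant swap, but to preserve the \emph{improved} precision (not just the naive bound \eqref{GGnaive}) the expansion has to be pushed to high enough order that every unmatched cumulant is paired with a quantity already small by the current bootstrap hypothesis. The full range $\eta_*\ge n^{-1+\epsilon}$ is then covered by iterating the (flow, GFT) pair over $\cO(\log n)$ dyadic scales, with the regimes $\eta^*\gtrsim|z_1-z_2|^2$ and $\eta^*\lesssim|z_1-z_2|^2$ handled separately since the gain mechanism is different in each.

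The principal obstacle is the design of the matrix-valued characteristics so that the leading Ito drift of $G_t^{z_{1,t}} A G_t^{z_{2,t}}$ is matched by the variation of $M_{12}^A$ along those characteristics: this is the novel technical step that completely circumvents inverting $\mathcal{B}_{12}$ along its unstable direction, and checking compatibility requires analysing the matrix Dyson equation along the entire flow. A secondary subtlety is that one simultaneously needs a matching \emph{isotropic} control of the two-resolvent quantity — to evaluate the fluctuation-averaging terms in the Ito expansion uniformly in the deterministic matrix $A$ — which must be propagated by the same flow. Once these two ingredients are in place, the remaining estimates, namely Gronwall along characteristics, Burkholder--Davis--Gundy for the martingale term, and the cumulant bookkeeping in the GFT step, are more routine in spirit.
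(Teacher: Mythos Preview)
Your overall strategy matches the paper's: characteristic flow to propagate the local law from large to small $\eta$, then GFT to remove the Gaussian component, iterated in tandem, with the regimes $\eta^*\gtrsim|z_1-z_2|^2$ and $\eta^*\lesssim|z_1-z_2|^2$ treated separately. However, your account of how the unstable mode is handled in the first regime is not accurate, and this is the heart of the argument.

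You write that the characteristics are designed so that the variation of $M_{12}^A$ cancels the drift and that ``this dynamical cancellation plays the role of inverting $\mathcal{B}_{12}$ without paying the $|\beta|^{-1}$ cost''. In the paper the characteristic flow cancels only the most singular pieces of the It\^o drift (the $\braket{\Lambda_i\cdots}$ terms and the $M_i$--part of $\braket{G_iE_k}\braket{\cdots}$). What survives is a quadratic term $\braket{\mathcal{S}[G_1AG_2]G_2BG_1}$ whose linearisation around the deterministic approximation produces a \emph{linear} feedback $2\braket{M_{12,t}^I}\,Y_t$ with $|\braket{M_{12,t}^I}|\sim|\beta_t|^{-1}$; the $|\beta|^{-1}$ is \emph{not} avoided. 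The actual mechanism is two-layered: (i) first decompose $A^*,B^*$ along the spectral projections of $\mathcal{B}_{21}^*$, $\mathcal{B}_{12}^*$, handling all stable directions by a direct argument (Lemma~\ref{pro:imppro}) that \emph{does} invert $\mathcal{B}_{12}$ but only where this is harmless; (ii) for the single bad direction $A=L_{21,-}$, $B=L_{12,-}$, compute explicitly that the Gronwall propagator satisfies $\exp\bigl(2\int_s^t|\braket{M_{12,r}^I}|\,\dif r\bigr)\lesssim \eta_{*,s}\sqrt{|\eta_{1,s}\eta_{2,s}|}\big/\bigl(\eta_{*,t}\sqrt{|\eta_{1,t}\eta_{2,t}|}\bigr)$, which is \emph{exactly} the ratio of the target error bounds at times $s$ and $t$. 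So the large coefficient is absorbed by Gronwall via this explicit propagator estimate, not eliminated by the flow design. Your proposal mentions neither the spectral splitting of $A,B$ nor this propagator computation, and without them the scheme in the regime $\eta^*\gtrsim|z_1-z_2|^2$ does not close.

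Two smaller corrections. You claim a matching \emph{isotropic} two--resolvent input is needed to close the It\^o estimates; in fact the paper closes everything using only \emph{averaged} two--resolvent quantities, via Schwarz reductions of the type $|\braket{G_1AG_2BG_1E}|\le (\eta_*|\eta_1\eta_2|)^{-1/2}\braket{\Im G_1A\Im G_2A^*}^{1/2}$, together with the standard single--resolvent isotropic law. And the number of tandem iterations is a fixed $n$--independent constant (roughly $|\log\epsilon|$, via scales $\eta^{(k)}=n^{-1+(2/3)^k}\vee n^{-1+\epsilon}$), not $\mathcal{O}(\log n)$; this matters because the accumulating $n^\xi$ factors from each step must remain under control.
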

The proof of Theorem~\ref{theo:impll} will be divided into two steps: (i) in Section~\ref{sec:normllaw}
we use the characteristics flow (see the introduction for relevant references using this method)
to show that if we know \eqref{eq:goodll} for large $\eta$'s then we can propagate the same
bound to smaller $\eta$'s at the expense of
adding a Gaussian component to $X$ (this will be used with an initial $\eta\sim 1$),
(ii) in Section~\ref{sec:GFT} we use a \emph{Green function comparison argument (GFT)} to remove the Gaussian component added in (i).

As we already pointed out in the introduction,  the local law in \eqref{eq:goodll}
is a significant improvement compared to \cite[Theorem 5.2]{1912.04100}.
The main difference is that the error term in \cite[Theorem 5.2]{1912.04100} contained
an additional large factor $\| {\mathcal{B}}_{12}^{-1}\|$,
the norm of the inverse of the {\it stability operator}
\begin{equation}\label{stabop}
    {\mathcal{B}}_{12}:= 1-M_1\mathcal{S}[\cdot]M_2
\end{equation}
acting on $\C^{2n\times 2n}$ matrices.
As we have $\| {\mathcal{B}}_{12}^{-1}\|\sim \big[ |z_1-z_2|^2 +|\eta_1|+|\eta_2|\big]^{-1} $,
this factor was affordable in \cite{1912.04100}
since there we considered the regime $|z_1-z_2|\ge n^{-\epsilon}$
but it would be badly not affordable in the current situation $|z_1-z_2|\ge n^{-1/2+\epsilon}$.
The removal of the factor $\| {\mathcal{B}}_{12}^{-1}\|$, hence getting an improved bound
for any $|z_1-z_2|\ge n^{-1/2+\epsilon}$,  is
the main achievement of the new characteristic flow technique.
In connection with this improvement, the range of
$\eta$ has also been improved: the estimate in \eqref{eq:goodll} holds uniformly
in $\eta_*\ge n^{-1+\epsilon}$ while the local law \cite[Theorem 5.2]{1912.04100}
holds
only for $\eta_*\ge n^{-1+\epsilon}|z_1-z_2|^{-2}$, which in the current mesoscopic case can be basically order one,
rendering the estimate useless. Furthermore,
the error term in \eqref{eq:goodll} is better than the one in~\cite[Theorem 5.2]{1912.04100} even in the
almost macroscopic
regime $|z_1-z_2|\ge n^{-\epsilon}$
for certain values of $\eta_*$. Finally, for a complete comparison
we also mention  that~\cite[Theorem 5.2]{1912.04100} was valid for all $z$ values, including
the edge regime $|z|\approx 1$ of the circular law, while for technical convenience
we restricted Theorem~\ref{theo:impll} to the bulk regime $|z|\le 1-\tau$.

We stated only the averaged (tracial) version of the two--resolvent local law since
this is needed  in the proof of our main result Theorem~\ref{theo:mesoclt}.  However
our method would also give  an isotropic local law for the matrix elements  $\langle {\bf x}, G_1AG_2 {\bf y}\rangle $
for deterministic vectors ${\bf x}, {\bf y}$
with a similar gain from the regime $|z_1-z_2|\gg \eta^*$ as in \eqref{eq:goodll}.

By Theorem~\ref{theo:impll} we readily conclude (see Appendices~\ref{app:CLT}--\ref{app:ind}) the following  two
propositions, whose combination will prove Theorem~\ref{theo:mesoclt} in Section~\ref{sec:mesoCLT}.

\begin{proposition}[CLT for resolvents]\label{prop:CLTresm}
    Let \(\epsilon,\xi,\tau,\omega_p,\omega_d>0\) be small constants  and $p\in \N$. Denote by $\Pi_p$
    the set of pairings\footnote{Note that $\Pi_p=\emptyset$ if $p$ is odd.}  on \([p]\).
    Then for \(z_1,\dots,z_p\in\C\), with $|z_i|\le 1-\tau$, $n^{-1/2+\omega_p}\le|z_l-z_m|\le n^{-\omega_d}$, and \(\eta_1,\dots,\eta_p\ge n^{-1+\epsilon}\), we have
    \begin{gather}
        \begin{aligned}
            \E\prod_{i\in[p]} \braket{G_i-\E G_i} & = \sum_{P\in \Pi_p}\prod_{\{i,j\}\in P} \E  \braket{G_i-\E G_i}\braket{G_j-\E G_j}  + \mathcal{O}\left(\Psi\right)     \\
                                                  & = \frac{1}{n^p}\sum_{P\in \Pi_p}\prod_{\{i,j\}\in P} \frac{V_{i,j}+\kappa_4 U_i U_j}{2}+ \mathcal{O}\left(\Psi\right),
        \end{aligned}\label{eq CLT resovlent}\raisetag{-5em}
    \end{gather}
    where \(G_i=G^{z_i}(\ii \eta_i)\),
    \begin{equation}\label{eq psi error}
        \Psi:= \frac{n^\xi}{(n\eta_*)^{1/2}}\prod_{i\in[p]}\frac{1}{n\eta_i},
    \end{equation}
    \(\eta_*:= \min_i\eta_i\), and \(V_{i,j}=V_{i,j}(z_i,z_j,\eta_i,\eta_j)\) and \(U_i=U_i(z_i,\eta_i)\) are defined as
    \begin{equation}
        \label{eq:exder}
        \begin{split}
            V_{i,j}&:= \frac{1}{2}\partial_{\eta_i}\partial_{\eta_j} \log \bigl[ 1+(u_i u_j\abs{z_i}\abs{z_j})^2-m_i^2 m_j^2-2u_i u_j\Re z_i\overline{z_j}\bigr], \\
            U_i&:= \frac{\ii}{\sqrt{2}}\partial_{\eta_i} m_i^2,
        \end{split}
    \end{equation}
    with \(m_i=m^{z_i}(\ii\eta_i)\) and \(u_i=u^{z_i}(\ii\eta_i)\) from~\eqref{Mz}.
    Finally, $\kappa_4:= \E |\chi|^4-2 $ is the fourth cumulant
    of the random variable $\chi$ in Assumption~\ref{ass:1}.
\end{proposition}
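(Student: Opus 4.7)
The plan is to prove~\eqref{eq CLT resovlent} by induction on $p$ via an iterated cumulant expansion, with the new two-resolvent local law Theorem~\ref{theo:impll} as the essential technical input. The scheme follows the blueprint of~\cite[Section 6]{1912.04100}, but the improved error bound of Theorem~\ref{theo:impll} now allows us to close the argument in the entire mesoscopic regime $|z_i-z_j|\ge n^{-1/2+\omega_p}$, whereas the previous version required $|z_i-z_j|\ge n^{-\epsilon}$.

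The core object is a master equation for the fluctuation $\wt{G}_i:= G_i-\E G_i$, obtained by combining the resolvent identity $(H^{z_i}-\ii\eta_i)G_i=-I$ with the matrix Dyson equation for $M_i$, which isolates the $X$-dependence in a single renormalised term. Inserting this into one factor of $\E\prod_{i\in [p]}\braket{\wt{G}_i}$ and applying the cumulant expansion
\[
    \E[x_{ab}F(X)]= \sum_{k\ge 1}\frac{\kappa_{k+1}(x_{ab})}{k!}\E\bigl[\partial_{ab}^k F(X)\bigr],
\]
yields terms in which the derivative $\partial_{ab}$ acts on one of the remaining $p-1$ factors $\wt{G}_j$. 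Each such term contains either a two-resolvent trace $\braket{G_i A G_j B}$, which is replaced by its deterministic approximation $\braket{M_{ij}^A B}$ via Theorem~\ref{theo:impll}, or a product of two separate traces $\braket{G_i C}\braket{G_j D}$, which continues the recursion and is controlled by the induction hypothesis. The $k=1$ contributions generate the Gaussian pair contractions $V_{i,j}$; the $k=3$ contributions produce the $\kappa_4 U_i U_j$ correction; odd-order contributions vanish to leading order by the moment assumptions $\E\chi=\E\chi^2=0$; and cumulants of order $\ge 5$ are absorbed into the error $\Psi$ via the standard $n^{-(k-1)/2}$ decay of normalised cumulants together with routine index counting.

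To verify the explicit formulas~\eqref{eq:exder}, I will compute the pair covariance $\E\braket{\wt{G}_i}\braket{\wt{G}_j}$ to leading order. After one iteration of the master equation the answer reduces to a trace of the form $\braket{M_i \mathcal{S}[M_{ij}^{\bullet}] M_j}$ and its analogues, which I diagonalise in the $\{E_1,E_2,F,F^*\}$ basis from~\eqref{eq:defe1e2f}. The unstable mode of $\mathcal{B}_{12}$ lives on the invariant block $\mathrm{span}\{E_1,E_2\}$, and the restricted trace can be recognised as $\tfrac{1}{2}\partial_{\eta_i}\partial_{\eta_j}\log$ of the $2\times 2$ stability determinant, reproducing $V_{i,j}$. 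The coefficient $U_i$ arises analogously from a single-index fourth-cumulant contraction and simplifies to $\tfrac{\ii}{\sqrt{2}}\partial_{\eta_i} m_i^2$ via the expansions in~\eqref{Mz} of $m_i$ and $u_i$.

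The main obstacle is maintaining the $\Psi$ error uniformly in the mesoscopic regime. A naive inversion of $\mathcal{B}_{12}$ would lose a factor $\lVert\mathcal{B}_{12}^{-1}\rVert\sim(|z_1-z_2|^2+\eta^*)^{-1}$, which is prohibitive when $|z_i-z_j|\sim n^{-a}$. Theorem~\ref{theo:impll} circumvents this obstruction: the deterministic bound $\lVert M_{ij}^A\rVert\lesssim (|z_i-z_j|^2+\eta^*)^{-1}$, combined with the improved error factor $\bigl(\eta^*/(\eta^*+|z_1-z_2|^2)\bigr)^{1/4}$, precisely absorbs the stability blow-up and delivers the target bound $\Psi$ uniformly for all $\eta_*\ge n^{-1+\epsilon}$ and $|z_i-z_j|\ge n^{-1/2+\omega_p}$.
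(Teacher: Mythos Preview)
Your proposal is correct and follows essentially the same route as the paper's proof in Appendix~A.1: rewrite $\braket{G_i-\E G_i}$ via the underline renormalisation $\braket{G-M}=-\braket{A\un{WG}}+\ldots$, perform an iterated cumulant expansion, extract the Gaussian pair term $V_{i,j}$ and the fourth-cumulant correction $\kappa_4 U_iU_j$ from the resulting two-resolvent traces using Theorem~\ref{theo:impll} in place of~\cite[Theorem~5.2]{1912.04100}, and close by induction on $p$.

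One small correction: the third-order cumulant contributions do not ``vanish by the moment assumptions $\E\chi=\E\chi^2=0$'' --- the mixed third cumulants such as $\kappa(\chi,\chi,\bar\chi)$ are not assumed to be zero. They are instead absorbed into the error $\Psi$ by the standard $n^{-1/2}$ gain in the power counting, exactly as in~\cite[Eqs.~(6.26)--(6.29)]{1912.04100}. Also, your last paragraph slightly overstates the mechanism: in the actual application (the analogue of~\cite[Eq.~(6.23)]{1912.04100}) one simply replaces $\braket{G_1A_1EG_iA_iE'}$ and the accompanying $\SS$-term by their deterministic approximations with the error from Theorem~\ref{theo:impll}; no separate ``absorption of the stability blow-up'' is needed, because the whole point of Theorem~\ref{theo:impll} is that its error term already comes \emph{without} the factor $\lVert\mathcal{B}_{12}^{-1}\rVert$.
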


The expectation of $G_i$ appearing in \eqref{eq CLT resovlent} has already been identified with
sufficiently high precision in~\cite[Lemma 6.2]{1912.04100}:
\begin{equation}
    \label{prop clt exp}
    \braket{\E G}= \braket{M} - \frac{\ii\kappa_4}{4n}\partial_\eta(m^4) + \cO\Bigl(\frac{1}{\abs{1-\abs{z}}n^{3/2} (1+\eta)}+\frac{1}{\abs{1-\abs{z}}(n\eta)^2}\Bigr),
\end{equation}
which holds for $\eta\ge n^{-1+\epsilon}$ and $|z|\le C$, for some constant $C>0$.

The following proposition states that
$\braket{G^{z_1}}$ and $\braket{G^{z_2}}$ are asymptotically independent  as long as $n^{-1/2}\ll |z_1-z_2|\ll 1$.
This complements \cite[Proposition 3.5]{1912.04100}
which proves a similar result in the regime $|z_1-z_2|\sim 1$.

\begin{proposition}[Independence of resolvents with small imaginary part]
    \label{prop:indmr}
    Fix \(p\in \mathbf{N}\). For any sufficiently small constants \(\tau,\omega_p,\omega_d,\delta_0,\delta_1>0\), there exists \(\omega\gg \delta_0,\delta_1\) such that for any \(\abs{z_l}\le 1-\tau\), \(n^{-1/2+\omega_p}\le\abs{z_l-z_m}\le n^{-\omega_d}\), with \(l,m \in [p]\), \(l\ne m\), it holds
    \begin{equation}
        \label{eq:indtrlm}
        \E \prod_{l=1}^p \braket{ G^{z_l}(\ii\eta_l)}=\prod_{l=1}^p\E  \braket{ G^{z_l}(\ii\eta_l)}+\mathcal{O}\left(\frac{n^{p\delta_0+\delta_1}}{n^{\omega}}\right),
    \end{equation}
    for any \(\eta_1,\dots,\eta_p\in [n^{-1-\delta_0},n^{-1+\delta_1}]\).
\end{proposition}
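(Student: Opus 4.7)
The plan is to extend the DBM-based argument of \cite[Section 7]{1912.04100} and \cite{MR4235475} to the mesoscopic regime, using the newly established almost-orthogonality of singular vectors in \cref{lem:overb} as the essential new input that replaces the cruder overlap bound previously available only for \(\abs{z_l-z_m}\ge n^{-\omega_d}\).

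First I would consider the Ornstein--Uhlenbeck flow \(\dif X_t=-\tfrac12 X_t\,\dif t+\dif B_t/\sqrt{n}\) with \(X_0=X\), which induces an evolution of each Hermitisation \(H^{z_l}_t\) and hence of its positive eigenvalues \(\{\lambda^{z_l,t}_i\}_i\). These satisfy a Dyson Brownian motion driven by Brownian motions \(\{b^{z_l}_i(t)\}\) whose increments are quadratic forms in the corresponding singular vectors, so the cross-covariance of \(\dif b^{z_l}_i\) and \(\dif b^{z_m}_j\) is controlled by the overlaps \(\braket{\bm u^{z_l}_i,\bm u^{z_m}_j}\) and \(\braket{\bm v^{z_l}_i,\bm v^{z_m}_j}\). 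By \cref{lem:overb}, these overlaps are bounded by \(n^{-\omega_E}\) for low-index vectors whenever \(\abs{z_l-z_m}\ge n^{-1/2+\omega_p}\), so the DBM drivers for different labels \(l\) are almost independent on all mesoscopic scales.

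Next I would implement the coupling and homogenisation scheme of \cite{MR3541852,MR3914908,MR3916329} to construct, on an enlarged probability space, a parallel DBM system \(\{\widetilde\lambda^{z_l,t}_i\}\) driven by genuinely independent Brownian motions and compare it to the true system. The homogenisation identity transfers the overlap bound into a pointwise bound on the differences of low-lying eigenvalues, and hence on \(\braket{G^{z_l}_t(\ii\eta_l)}\), up to a power-of-\(n^{-\omega_E}\) error. Running the flow for time \(t\asymp n^{-1+\omega}\) with \(\omega\gg\delta_0,\delta_1\) is long enough for local equilibration at the cusp near zero but short enough that the joint expectation of the \emph{reference} traces factorises, which yields~\cref{eq:indtrlm} for the perturbed matrix \(X_t\) with an error of size \(n^{-\omega'}\), \(\omega'\gg\delta_0,\delta_1\).

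Finally I would remove the added Gaussian component by a Green function comparison argument using high-order cumulant expansions, as in \cite[Lemma 7.3]{1912.04100}: the product \(\prod_l\braket{G^{z_l}(\ii\eta_l)}\) changes by at most \(n^{-\omega''}\) along the OU flow on this short time scale, thanks to the high-moment assumption on \(\chi\) and the single-resolvent local law~\eqref{eq:singlegllaw}. The main obstacle is the coupling step: one must propagate the \emph{static} bound of \cref{lem:overb} through the DBM \emph{dynamics} so that near-independence of drivers persists along the entire flow, and this near-independence must be strong enough to survive homogenisation at the genuinely mesoscopic separation \(\abs{z_l-z_m}\sim n^{-1/2+\omega_p}\), far below the macroscopic threshold handled previously.
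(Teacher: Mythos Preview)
Your proposal is correct and follows essentially the same route as the paper. The paper's proof simply states that the argument of \cite[Section~7]{1912.04100} goes through verbatim once \cite[Lemma~7.9]{1912.04100} is replaced by the mesoscopic overlap bound of \cref{lem:overb}, which is exactly the DBM coupling/homogenisation plus GFT scheme you outline; your identified ``main obstacle'' is precisely what \cref{lem:overb} resolves by verifying \cite[Assumption~7.1]{1912.04100} in the full mesoscopic regime.
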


\begin{remark}\label{rmk:real} We formulated our results for the complex case.
    We now briefly explain how the above strategy needs to be modified for the real symmetry class, i.e.
    what should be changed to extend our real macroscopic CLT proof~\cite{MR4235475} to
    the entire mesoscopic regime. Apart from some irrelevant technicalities, the main difference between the proofs for the
    complex and the real cases is that the singular vector overlap bound in Theorem~\ref{lem:overb}
    is needed not only for the low lying singular values, but practically for all of them.
    This requires to prove the improved version of the local law~\eqref{GGnaive}
    for $\Tr G^{z_1}(w_1) G^{z_2}(w_2)$
    not only on the imaginary axis, $w_j=\ii \eta_j$, but for any spectral parameters $w_1, w_2$ (see~\cite[Theorem 3.5]{MR4235475}). The GFT argument can easily be adjusted to this
    more general case, but the characteristics flow requires very precise explicit calculations
    that were simpler on the imaginary axis. These calculations are elementary but fairly tedious,
    so we omit them here.
\end{remark}

\section{Mesoscopic CLT for linear statistics: Proof of Theorem~\ref{theo:mesoclt}}
\label{sec:mesoCLT}

The proof of Theorem~\ref{theo:mesoclt}  follows similarly to \cite[Section 4]{1912.04100} once
Propositions~\ref{prop:CLTresm}--\ref{prop:indmr} are given as inputs. We thus now only present the necessary
major  steps in the proof of Theorem~\ref{theo:mesoclt} without presenting the detailed
(fairly elementary) computations.
The computations to get the expectation in \eqref{eq:compexp}
are completely analogous (indeed easier since here $|z|\le 1-\tau$) to \cite[Section 4.2]{1912.04100},
and so omitted. Here we only focus on the computation of the variance and higher moments.
We remark that in \cite{1912.04100} we assumed that $f\in H^{2+\delta}_0(\Omega)$ instead
of $f\in H^2_0(\Omega)$ only to compute the boundary term from the last two lines of \cite[Eq. (4.29)]{1912.04100}.
Since we now have $|z_0|\le 1-\tau$ this additional ($+\delta$) regularity assumption
is not needed in this case (see also the related Remark~\ref{rem:opt}).

First of all we notice that the main contribution to \eqref{eq:splitingirko} comes when $\eta\gg n^{-1}$,
i.e. the regime $I_{\eta_c}^T$ is the only term in \eqref{eq:splitingirko} giving an order one contribution
to the linear statistics $L_n(f_{z_0,a})$. This fact is stated in the following lemma whose proof
is postponed to the end of this section.

\begin{lemma}
    \label{lem:firstappr}
    Fix $\tau>0$,  $|z_0|\le 1-\tau$ and $a\in (0, \frac{1}{2})$,
    then for any $p\in\N$, and for $f_{z_0,a}^{(i)}(z):=f^{(i)}(n^a(z-z_0))$,
    with $f^{(i)}\in H_0^2(\Omega)$, it holds
    \begin{equation}
        \label{eq:firstappr}
        \E\prod_{i=1}^p L_n(f_{z_0,a}^{(i)})=\E \prod_{i=1}^p I_{\eta_c}^T(f_{z_0,a}^{(i)})+\mathcal{O}\left(n^{-c(p)}\right),
    \end{equation}
    for some small $c(p)>0$. The implicit constant in $\mathcal{O}(\cdot)$ may depend on $p$, $\norm{\Delta f^{(i)}}_{L^2(\Omega)}$, and $|\Omega|$.
\end{lemma}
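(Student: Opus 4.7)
The plan is to show each of the three non-principal terms $J_T$, $I_0^{\eta_0}$, $I_{\eta_0}^{\eta_c}$ has $L^{2p}$-norm bounded by $n^{-c(p)}$, while $I_{\eta_c}^T$ has uniformly bounded $L^{2p}$-norm; the latter follows from Proposition~\ref{prop:CLTresm} applied with $2p$ copies (integrated over $z$ and $\eta$ as in Girko's formula). Expanding the product on the left side of~\eqref{eq:firstappr}, every term containing at least one factor drawn from $\{J_T,I_0^{\eta_0},I_{\eta_0}^{\eta_c}\}$ contributes at most $n^{-c(p)}$ by H\"older's inequality, leaving precisely the right-hand side of~\eqref{eq:firstappr} as the main term. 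The bounds for $J_T$ and $I_0^{\eta_0}$ are essentially identical to those in the macroscopic case~\cite{1912.04100, MR4235475}: $J_T$ is controlled by $T=n^{100}\gg\norm{H^z}$, which makes $\log\abs{\det(H^z-\ii T)}$ effectively deterministic up to negligible fluctuations; $I_0^{\eta_0}$ is controlled by smoothing inequalities for the smallest singular value of $X-z$ (see~\cite[Theorem 3.2]{MR2684367}), showing that only very few eigenvalues of $H^z$ fall in $[-\eta_0,\eta_0]$. Both estimates are simpler here since we work uniformly in the bulk, $\abs{z_0}\le 1-\tau$.

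The core step is the bound on $I_{\eta_0}^{\eta_c}$, for which Proposition~\ref{prop:indmr} is used in an essential way. Expanding the $2p$-th moment produces a $4p$-fold integral in $(z_1,\dots,z_{2p})$ and $(\eta_1,\dots,\eta_{2p})$ of $\E\prod_l[\Tr G^{z_l}(\ii\eta_l)-\E\Tr G^{z_l}(\ii\eta_l)]$ against the weights $\prod_l\Delta f_{z_0,a}(z_l)$. I split the $z$-domain into the well-separated region $\mathcal{W}:=\set*{(z_1,\dots,z_{2p})\given \abs{z_l-z_m}\ge n^{-1/2+\omega_p}\text{ for all }l\ne m}$ and its complement. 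On $\mathcal{W}$, an inclusion-exclusion expansion over subsets $S\subseteq[2p]$ combined with Proposition~\ref{prop:indmr} yields $\E\prod_l[\braket{G^{z_l}(\ii\eta_l)}-\E\braket{G^{z_l}(\ii\eta_l)}]=\mathcal{O}(n^{2p\delta_0+\delta_1-\omega})$; the leading telescoping sum $\sum_S(-1)^{2p-\abs{S}}\prod_l\E\braket{G^{z_l}}=0$ cancels. Multiplying by $(2n)^{2p}$, integrating $\eta_l\in[\eta_0,\eta_c]$ (length $n^{-1+\delta_1}$ each), and integrating each $z_l$ against $\int\abs{\Delta f_{z_0,a}}\lesssim\norm{\Delta f}_{L^1}$, the total contribution from $\mathcal{W}$ is $\mathcal{O}(n^{2p(\delta_0+\delta_1)+\delta_1-\omega})=n^{-c(p)}$, because Proposition~\ref{prop:indmr} supplies $\omega\gg\delta_0,\delta_1$.

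On $\mathcal{W}^c$, at least one pair satisfies $\abs{z_l-z_m}\le n^{-1/2+\omega_p}$; a union bound over $\binom{2p}{2}$ such pairs shows that the $\prod\abs{\Delta f_{z_0,a}}$-measure of this region is $\lesssim n^{2a-1+2\omega_p}$, using that the support of $\Delta f_{z_0,a}$ has area $\sim n^{-2a}$ while the constraint $\abs{z_l-z_m}\le n^{-1/2+\omega_p}$ carves out a disk of area $\sim n^{-1+2\omega_p}$. On this bad set the single-resolvent local law~\eqref{eq:singlegllaw} gives $\abs{\braket{G^z(\ii\eta)}-\E\braket{G^z(\ii\eta)}}\prec(n\eta)^{-1}$, which after $\eta$-integration yields $\abs*{\int_{\eta_0}^{\eta_c}[\Tr G^{z_l}-\E\Tr G^{z_l}]\dif\eta}\prec n^\xi$; the bad-set contribution is thus $\mathcal{O}(n^{2p\xi+2a-1+2\omega_p})=n^{-c(p)}$ for $\xi,\omega_p$ small enough, since $a<1/2$. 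The main obstacle is precisely matching these scales: the separation threshold $n^{-1/2+\omega_p}$ in Proposition~\ref{prop:indmr} must lie below the support radius $\sim n^{-a}$ of $\Delta f_{z_0,a}$, i.e.~$a+\omega_p<1/2$, and it is Theorem~\ref{lem:overb}, obtained from the new two--resolvent local law Theorem~\ref{theo:impll}, which supplies the decorrelation across the \emph{entire} mesoscopic scale and makes this delicate margin available.
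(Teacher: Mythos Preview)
Your proposal is correct in spirit and uses the same key inputs (Proposition~\ref{prop:indmr} for the critical regime, the smallest-singular-value bound for the sub-critical regime, and triviality of $J_T$), but it is organized differently from the paper and has one minor misattribution.

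The paper does \emph{not} bound the $L^{2p}$-norm of each bad term. Instead it proves the much simpler estimates
\[
    \abs{J_T}+\abs{I_0^{\eta_0}}+\abs{I_{\eta_0}^{\eta_c}}+\abs{I_{\eta_c}^T}\le n^\xi\quad\text{w.v.h.p.},\qquad \E\abs{I_0^{\eta_0}}\le n^{-\delta'},\qquad \E\abs{I_{\eta_0}^{\eta_c}}^2\le n^{-\delta'},
\]
and then concludes~\eqref{eq:firstappr} by bounding every cross term with a single bad factor via the high-probability a~priori bound on the remaining $p-1$ factors. In particular, only the \emph{second} moment of $I_{\eta_0}^{\eta_c}$ is needed (so Proposition~\ref{prop:indmr} is invoked only for two points), and the separation threshold used is $n^{-a-\delta_*}$ rather than your $n^{-1/2+\omega_p}$. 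Your direct $2p$-th moment computation also works, but it requires Proposition~\ref{prop:indmr} with $2p$ points and a more delicate bad-set estimate; the paper's route is more economical.

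One small correction: your justification that $I_{\eta_c}^T$ has bounded $L^{2p}$-norm ``by Proposition~\ref{prop:CLTresm}'' is not quite right, since that proposition requires the separation $\abs{z_l-z_m}\ge n^{-1/2+\omega_p}$ and hence cannot be integrated directly over all of $\supp(\Delta f_{z_0,a})^{2p}$. The correct (and simpler) argument is the high-probability a~priori bound $\abs{I_{\eta_c}^T}\le n^\xi$ coming from the single-resolvent local law~\eqref{eq:singlegllaw}, which is what the paper uses.
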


Using Lemma~\ref{lem:firstappr} we then compute the deterministic approximation of the moments
of $L_n(f_{z_0,a})$ via the moments of the leading term  $I_{\eta_c}^T$.
The proof of this lemma is presented after we conclude the proof of our main result Theorem~\ref{theo:mesoclt}.

\begin{lemma}
    \label{lem:detterm}
    Consider $f_{z_0,a}^{(i)}$ as above, and recall that $\Pi_p$   denotes the set of pairings on $[p]$. Then it holds
    \begin{equation}
        \label{eq:detterm}
        \E \prod_{i=1}^p L_n(f_{z_0,a}^{(i)})=\sum_{P\in \Pi_p} \prod_{\{i,j\}\in P} \biggl[-\int_\C \dif^2 z_i
            \Delta f_{z_0,a}^{(i)} (z_i) \int_\C\dif^2 z_j\Delta f_{z_0,a}^{(j)} (z_j)\int_{0}^\infty \dif\eta_i\int_{0}^\infty\dif\eta_j \frac{V_{i,j}+\kappa_4 U_i U_j}{8\pi^2}\biggr] + \mathcal{O}(n^{-c(p)}),
    \end{equation}
    for some small \(c(p)>0\), with the $(z_i,\eta_i)$-dependent quantities \(V_{i,j}\) and \(U_i\) being defined in~\eqref{eq:exder}.  The implicit constant in $\mathcal{O}(\cdot)$ may depend on $p$, $\norm{\Delta f^{(i)}}_{L^2(\Omega)}$, and $|\Omega|$.

\end{lemma}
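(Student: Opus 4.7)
The plan is to plug everything into Girko's formula, Fubini, and then apply the CLT for resolvents (Proposition~\ref{prop:CLTresm}) to the inner expectation. By Lemma~\ref{lem:firstappr} we may replace $\E\prod_i L_n(f_{z_0,a}^{(i)})$ by $\E\prod_i I_{\eta_c}^T(f_{z_0,a}^{(i)})$ at the price of $\mathcal{O}(n^{-c(p)})$. Writing $\Tr = 2n\braket{\cdot}$ and unfolding the $p$ copies of $I_{\eta_c}^T$, we get
\begin{equation*}
  \E\prod_{i=1}^p I_{\eta_c}^T(f_{z_0,a}^{(i)}) = \Bigl(\frac{\ii n}{2\pi}\Bigr)^{p} \int\!\!\!\int \prod_{i=1}^p \Delta f_{z_0,a}^{(i)}(z_i)\,\dif^2z_i\,\dif\eta_i\; \E\prod_{i=1}^p \bigl[\braket{G^{z_i}(\ii\eta_i)} - \E\braket{G^{z_i}(\ii\eta_i)}\bigr],
\end{equation*}
with each $\eta_i$ ranging over $[\eta_c,T]$.

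Next I would split the $z$-domain into the \emph{good} region $\mathcal{G}:=\{|z_i-z_j|\ge n^{-1/2+\omega_p}\text{ for all }i\neq j\}$ and its complement $\mathcal{B}$. Because $f^{(i)}$ is compactly supported, $\Delta f_{z_0,a}^{(i)}$ is supported in a ball of radius $\sim n^{-a}$ around $z_0$, so automatically $|z_i-z_j|\lesssim n^{-a}\le n^{-\omega_d}$ for any fixed $\omega_d<a$, placing us in the regime of Proposition~\ref{prop:CLTresm}. On $\mathcal{G}$ we apply the CLT for resolvents:
\begin{equation*}
  \E\prod_{i=1}^p\braket{G^{z_i}(\ii\eta_i)-\E G^{z_i}(\ii\eta_i)} = \frac{1}{(2n^2)^{p/2}}\sum_{P\in\Pi_p}\prod_{\{i,j\}\in P}\bigl[V_{i,j}+\kappa_4 U_i U_j\bigr] + \mathcal{O}(\Psi).
\end{equation*}
Combining with the prefactor, each pair contributes $\ii^2/[(2\pi)^2\cdot 2] = -1/(8\pi^2)$, which yields the claimed combinatorial structure. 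The error term $\Psi$ integrated against $|\Delta f_{z_0,a}^{(i)}|$ over $[\eta_c,T]^p$ produces a factor of $n^\xi(n\eta_*)^{-1/2}\prod(n\eta_i)^{-1}$, and after the $\eta$-integration and the trivial $(2n)^p$ prefactor one is left with $n^{\xi-1/2+\delta_1(p+1/2)}\times(\log n)^p$, which is $\mathcal{O}(n^{-c(p)})$ provided $\delta_1,\xi$ are chosen small enough.

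The main obstacle is the contribution of the \emph{bad} set $\mathcal{B}$, where some pair $\{i_0,j_0\}$ satisfies $|z_{i_0}-z_{j_0}|<n^{-1/2+\omega_p}$ and Proposition~\ref{prop:CLTresm} is no longer applicable. Here I would use the crude high-probability bound $|\braket{G^{z_i}(\ii\eta_i)}-\E\braket{G^{z_i}(\ii\eta_i)}|\prec 1/(n\eta_i)$ coming from the single-resolvent local law~\eqref{eq:singlegllaw}. Combined with the $L^\infty$-bound $\|\Delta f_{z_0,a}^{(i)}\|_\infty\lesssim n^{2a}$ and the trivial volume estimate for the set $\{|z_{j_0}-z_{i_0}|<n^{-1/2+\omega_p}\}$, one obtains that the integral over $\mathcal{B}$ of a single pair is bounded by $n^{2a-1+2\omega_p}$ up to $\mathcal{O}((\log n)^p)$ factors; a union bound over the $O(p^2)$ choices of the offending pair gives the same order. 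Choosing $\omega_p\ll (1-2a)/2$ makes this $\mathcal{O}(n^{-c(p)})$.

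Finally, I would extend each $\eta_i$-integration from $[\eta_c,T]$ to $[0,\infty)$. Using the explicit formulas~\eqref{eq:exder} together with the expansions~\eqref{eq:expsmalleta} one checks that $V_{i,j}$ and $U_i$ are bounded as $\eta_i,\eta_j\to 0^+$ (at fixed $z_i\neq z_j$ in the bulk, the argument of the logarithm in $V_{i,j}$ tends to $|z_i-z_j|^2+2(1-|z_i|^2)(1-|z_j|^2)>0$), while at large $\eta$ they decay like $\eta^{-2}$ thanks to $m^z(\ii\eta),u^z(\ii\eta)\to 0$ at the rate $\eta^{-1}$. Thus the missing $[0,\eta_c]\cup[T,\infty)$ pieces of each $\eta$-integral contribute an error $\mathcal{O}(\eta_c+T^{-1})=\mathcal{O}(n^{-c})$. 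The cutoff $|z_i-z_j|\ge n^{-1/2+\omega_p}$ that was imposed to apply Proposition~\ref{prop:CLTresm} can be removed at the same cost because the limiting integrand $V_{i,j}+\kappa_4 U_i U_j$ is locally $L^1$ in $(z_i,z_j)$ after an analysis of the logarithmic singularity as $|z_i-z_j|\to 0$, so that the contribution of the bad $z$-region to the limiting formula is itself $\mathcal{O}(n^{2a-1+2\omega_p})$. Summing over pairings $P\in\Pi_p$ yields the claimed formula \eqref{eq:detterm}.
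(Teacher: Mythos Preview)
Your approach is essentially the same as the paper's: reduce to $I_{\eta_c}^T$ via Lemma~\ref{lem:firstappr}, excise a small-separation region in the $z$-variables, apply Proposition~\ref{prop:CLTresm} on the remainder, and then restore the missing $\eta$- and $z$-regions using a priori bounds on $V_{i,j},U_i$. The paper excises $\{|z_i-z_j|<n^{-a-\nu}\}$ rather than your $\{|z_i-z_j|<n^{-1/2+\omega_p}\}$, but either threshold works.

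There is one gap: you estimate the bad-set contribution using $\|\Delta f_{z_0,a}^{(i)}\|_\infty\lesssim n^{2a}$, but the hypothesis is only $f^{(i)}\in H^2_0(\Omega)$, so $\Delta f^{(i)}$ need not be bounded, and the lemma explicitly requires the implicit constant to depend only on $\|\Delta f^{(i)}\|_{L^2(\Omega)}$. The paper handles this (in the analogous step inside Lemma~\ref{lem:firstappr}) via Cauchy--Schwarz: for a single offending pair $\{i_0,j_0\}$ with threshold $r$,
\[
\int_{|z_{i_0}-z_{j_0}|<r}|\Delta f_{z_0,a}^{(i_0)}(z_{i_0})|\,|\Delta f_{z_0,a}^{(j_0)}(z_{j_0})|\,\dif^2z_{i_0}\,\dif^2z_{j_0}
\lesssim \|\Delta f^{(i_0)}\|_{L^1}\cdot r\cdot n^a\|\Delta f^{(j_0)}\|_{L^2},
\]
which for $r=n^{-1/2+\omega_p}$ is $O(n^{a-1/2+\omega_p})=o(1)$; the remaining $p-2$ variables contribute $O(1)$ via their $L^1$ norms. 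With this substitution your argument goes through and gives the stated $L^2$-dependence.

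A secondary point: your $\eta$-extension step (``$V_{i,j}$ bounded as $\eta\to0^+$'') is only valid pointwise for fixed $z_i\ne z_j$; the uniform bound blows up like $|z_i-z_j|^{-2}$. The paper makes this quantitative by quoting
\[
|V_{i,j}|\lesssim \frac{(1+\eta_i)^{-2}(1+\eta_j)^{-2}}{|z_i-z_j|^2+\eta_i+\eta_j},\qquad |U_i|\lesssim (1+\eta_i)^{-3},
\]
which simultaneously controls the $[0,\eta_c]$, $[T,\infty)$ and bad-$z$ contributions to the deterministic formula. You should invoke these bounds rather than a bare ``bounded''.
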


We are now ready to prove Theorem~\ref{theo:mesoclt}.

\begin{proof}[Proof of Theorem~\ref{theo:mesoclt}]
    To keep the presentation concise we only present the computations for higher moments, the computations for the expectation are completely analogous (actually easier) and so omitted (see e.g. \cite[Section 4.2]{1912.04100}).

    By Lemma~\ref{lem:detterm} we are only left with the computation of the deterministic term in the right  hand side of \eqref{eq:detterm} for \(f^{(1)},\ldots,f^{(p)}\in \{f,\overline{f}\}\). In particular, using the explicit formulas for $V_{i,j}$ and $U_iU_j$ from \eqref{eq:exder}, and using the explicit formulas for $m_i$, $u_i$ at $\eta_i=0$, we readily conclude
    \begin{equation}
        -\int_{0}^\infty \dif\eta_i\int_{0}^\infty\dif\eta_j \big[V_{i,j}+\kappa_4 U_i U_j\big]= -\frac{1}{2}\log|z_i-z_j|^2+\frac{\kappa_4}{2}(1-|z_i|^2)(1-|z_j|^2),
    \end{equation}
    for $|z_i|,|z_j|\le 1-\tau$ (see~\cite[Sections 4.3.1--4.3.3]{1912.04100}).

    Then, performing integration by parts in the $z_i,z_j$-variables and using that
    \[
        -\partial_{z_1}\partial_{\overline{z_2}}\log|z_1-z_2|^2\dif^2z_1\dif^2 z_2= \frac{\pi}{2} \delta (z_1-z_2)
    \]
    in the sense of distributions, we obtain\footnote{ Here by $\approx$ we mean that the equality holds up to error of size $n^{-c}$ for some small $c>0$.}
    \begin{equation}
        \label{ff}
        \begin{split}
            \E L_n(f_{z_0,a}^{(i)})L_n(f_{z_0,a}^{(j)}) &\approx\frac{1}{4\pi} \int_\C \nabla f_{z_0,a}^{(i)}(z) \cdot \nabla f_{z_0,a}^{(j)}(z)\, \dif^2 z+\frac{\kappa_4}{\pi^2}\left(\int_\C f_{z_0,a}^{(i)}(z)\,\dif^2z\right)\left(\int_\C f_{z_0,a}^{(j)}(z)\,\dif^2z\right)  \\
            &=\frac{1}{4\pi}  \int_\C \nabla f^{(i)}(z) \cdot \nabla f^{(j)}(z)\, \dif^2 z+\mathcal{O}\left(n^{-4a}\right),
        \end{split}
    \end{equation}
    which concludes the proof of Theorem~\ref{theo:mesoclt}.
\end{proof}

We now conclude this section with the proof of Lemma~\ref{lem:firstappr} and Lemma~\ref{lem:detterm}.

\begin{proof}[Proof of Lemma~\ref{lem:firstappr}]

    This proof  is analogous to  \cite[Lemma 4.3]{1912.04100}. We only present a few minor differences for completeness. Proceeding as in the proof of \cite[Lemma 4.3]{1912.04100}, using that $\|\Delta f^{(i)}_{z_i,a_i}\|_{L^q(\Omega)}= \|\Delta f^{(i)}\|_{L^q(\Omega)}$, $q\ge 1$, we conclude the following a priori bounds
    \begin{equation}
        \label{eq:b1}
        \abs{J_T}\le \frac{n^{1+\xi}\norm{ \Delta f}_{L^1(\Omega)}}{T^2}, \qquad \abs*{I_0^{\eta_0}}+\abs*{I_{\eta_0}^{\eta_c}}+\abs{I_{\eta_c}^T}\le n^\xi \norm{ \Delta f}_{L^2(\Omega)} \abs{\Omega}^{1/2}
    \end{equation}
    with very high probability for each  $f=f^{(i)}$, with $J_T = J_T(f^{(i)}_{z_i,a_i})$, and similarly for the other terms.
    Additionally, by \cite[Theorem 3.2]{MR2684367} we conclude
    \begin{equation}
        \label{eq:b2}
        \E\abs*{I_0^{\eta_0}}\le n^{-\delta'} \norm{ \Delta f}_{L^2(\Omega)},
    \end{equation}
    for some small fixed $\delta'>0$. Next we will prove that
    \begin{equation}
        \label{eq:b3}
        \E\abs*{I_{\eta_0}^{\eta_c}}^2\le n^{-\delta'} \norm{ \Delta f}_{L^2(\Omega)}^2.
    \end{equation}
    Note that combining \eqref{eq:b1}--\eqref{eq:b3} we immediately conclude \eqref{eq:firstappr}.

    For the proof of~\eqref{eq:b3} we now compute (here we neglect $\log n$-factors)
    \begin{equation}
        \label{eq:b4}
        \begin{split}
            \E \abs{I_{\eta_0}^{\eta_c}}^2&=\frac{n^2}{4\pi^2}\int\int_{\abs{z_2-z_1}\ge n^{-a-\delta_*}} \dif^2 z_1\dif^2 z_2\, \Delta f_{z_0,a}(z_1)  \Delta \overline{f_{z_0,a}(z_2)}  \\
            &\qquad\quad\times \int\int_{\eta_0}^{\eta_c} \dif \eta_1 \dif \eta_2  \E \Big[\braket{  G^{z_1}(\ii\eta_1) -\E  G^{z_1}(\ii\eta_1)}\braket{  G^{z_2}(\ii\eta_2) -\E  G^{z_2}(\ii\eta_2)}\Big]+\mathcal{O}\left(\frac{n^{2\xi}}{n^{\delta_*}} \right),
        \end{split}
    \end{equation}
    where we used that the regime $|z_1-z_2|< n^{-a-\delta_*}$ is bounded by $n^{2\xi-\delta_*}$:
    \[
        \begin{split}
            &\frac{n^2}{4\pi^2}\int\int_{\abs{z_2-z_1}< n^{-a-\delta_*}} \dif^2 z_1\dif^2 z_2\, \Delta f_{z_0,a}(z_1)  \Delta \overline{f_{z_0,a}(z_2)}  \\
            &\qquad\quad\times \int\int_{\eta_0}^{\eta_c} \dif \eta_1 \dif \eta_2  \E \Big[\braket{  G^{z_1}(\ii\eta_1) -\E  G^{z_1}(\ii\eta_1)}\braket{  G^{z_2}(\ii\eta_2) -\E  G^{z_2}(\ii\eta_2)}\Big] \\
            & \qquad\lesssim n^2\int\int_{\abs{z_2-z_1}< n^{-a-\delta_*}} \dif^2 z_1\dif^2 z_2\, \big|\Delta f_{z_0,a}(z_1) \big|\big| \Delta \overline{f_{z_0,a}(z_2)}\big|  \\
            &\qquad \qquad\quad\times \int\int_{\eta_0}^{\eta_c} \dif \eta_1 \dif \eta_2  \E \Big[\big|\braket{G^{z_1}(\ii\eta_1) -\E  G^{z_1}(\ii\eta_1)}\big|^2+\big|\braket{G^{z_2}(\ii\eta_2) -\E  G^{z_2}(\ii\eta_2)}\big|^2\Big] \\
            &\qquad\lesssim n^{2\xi}\int \dif^2 z_1\, \big|\Delta f_{z_0,a}(z_1) \big| \left(\int_{\abs{z_2-z_1}< n^{-a-\delta_*}}\dif^2 z_2\right)^{1/2} \left(\int_{\abs{z_2-z_1}< n^{-a-\delta_*}}\dif^2 z_2\, \big| \Delta \overline{f_{z_0,a}(z_2)}\big|
            ^2\right)^{1/2} \\
            &\qquad\lesssim n^{2\xi} \lVert \Delta f\rVert_{L^2(\Omega)}^2 n^{-a-\delta_*}n^a\lesssim n^{2\xi-\delta_*},
        \end{split}
    \]
    where we recall that $f_{z_0,a}(z)=f(n^a(z-z_0))$ by \eqref{eq:resctestf}; more precisely, we used the scaling
    \[
        \lVert\Delta f_{z_0,a}\rVert_{L^1(\Omega)}=  \lVert \Delta f\rVert_{L^1(\Omega)}, \qquad\quad    \lVert\Delta f_{z_0,a}\rVert_{L^2(\Omega)}= n^a   \lVert\Delta f\rVert_{L^2(\Omega)}.
    \]
    We point out that in the second inequality we also used the averaged local law in \eqref{eq:singlegllaw}. In particular, we remark that the implicit constant in $\mathcal{O}(\cdot)$ in \eqref{eq:b4} depends on $\norm{\Delta f}_{L^2(\Omega)}$ even if not written explicitly.

    Then, by Proposition~\ref{prop:indmr}, we conclude that
    \begin{equation}
        \label{eq:b5}
        \E \Big[\braket{  G^{z_1}(\ii\eta_1) -\E  G^{z_1}(\ii\eta_1)}\braket{  G^{z_2}(\ii\eta_2) -\E  G^{z_2}(\ii\eta_2)}\Big]=\mathcal{O}\left(\frac{n^{c(\delta_0+\delta_1)}}{n^\omega}\right),
    \end{equation}
    for some $c>0$ and $\omega\gg \delta_0+\delta_1$. Plugging \eqref{eq:b5} in \eqref{eq:b4} we conclude the proof of \eqref{eq:b3}.

\end{proof}

\begin{proof}[Proof of Lemma~\ref{lem:detterm}]

    Again, this proof is basically the same as its macroscopic counterpart in \cite[Lemma 4.7]{1912.04100}.
    Using the same notation as in \cite[Proof of Lemma 4.7]{1912.04100}, we define
    \[
        \widehat{Z}_i:= \bigcup_{j<i}\set{  z_j : \,\abs{z_i-z_j}\le n^{-a-\nu}}
    \]
    for some small fixed $\nu>0$. Then, similarly to \eqref{eq:b4}, we start
    removing the regime $|z_i-z_j|<n^{-a-\nu}$ (recall that $\eta_c=n^{-1+\delta_1}$):
    \begin{equation}
        \E \prod_{i=1}^p L_n\left(f_{z_0,a}^{(i)}\right)=\frac{(-n)^p}{(2\pi \ii)^p}\prod_{i\in [p]}\int_{\widehat{Z}_i^c}\dif^2 z_i
        \Delta f^{(i)}(z_i) \E \prod_{i\in[p]} \int_{\eta_c}^T \braket{G^{z_i}(\ii\eta_i)-\E G^{z_i}(\ii\eta_i)} \dif \eta_i
        + \mathcal{O}(n^{-c}),
    \end{equation}
    for some small $c>0$ which depends on $p,\xi,\delta_1,\nu$ and that may change from line to line.
    We point out that, before removing $|z_i-z_j|<n^{-a-\nu}$, here we also used Lemma~\ref{lem:firstappr}  to remove the integration regimes $\eta\in [0,\eta_c)$ and $\eta\in (T,\infty)$.

    Then, by Proposition~\ref{prop:CLTresm}, it readily follows that
    \begin{equation}
        \label{eq:prodresappl}
        \E \prod_{i=1}^p L_n\left(f_{z_0,a}^{(i)}\right)=-\prod_{i=1}^p \int_{\widehat{Z}_i^c} \dif^2 z_i \Delta f_{z_0,a}^{(i)} \sum_{P\in \Pi_p} \prod_{\{i,j\}\in P}  \int\int_{\eta_c}^T \dif\eta_i\dif\eta_j \frac{V_{i,j}+\kappa_4 U_i U_j}{8\pi^2}+ \mathcal{O}(n^{-c}).
    \end{equation}
    Using the following bounds
    \[
        \big|V_{i,j}\big|\lesssim \frac{(1+\eta_i)^{-2}(1+\eta_j)^{-2}}{|z_1-z_2|^2+\eta_i+\eta_j}, \qquad \big|U_i\big|\lesssim \frac{1}{(1+\eta_i)^3},
    \]
    from \cite[Eq. (4.21)]{1912.04100}, to add back first the removed
    $\eta$-regimes $[0,\eta_c]\cup [T,\infty)$ and then the regime $\widehat{Z}_i$
    in \eqref{eq:prodresappl} we conclude \eqref{eq:detterm}.
\end{proof}

\nc

\section{Proof of Theorem~\ref{theo:impll} for matrices with a Gaussian component}
\label{sec:normllaw}

Consider the Ornstein-Uhlenbeck (OU) flow
\begin{equation}
    \label{eq:OUaa}
    \dif X_t=-\frac{1}{2}X_t \dif t+\frac{\dif \mathfrak{B}_t}{\sqrt{n}},
\end{equation}
with $\mathfrak{B}_t$ a matrix whose entries are complex i.i.d. Brownian motions. Let
$$
    W_t: = \begin{pmatrix} 0 & X_t \cr X_t^* & 0 \end{pmatrix},
$$
be the Hermitisation of $X_t$ as in \eqref{eq:herm} (for $z=0$), and similarly define $B_t$ being the Hermitisation of $\mathfrak{B}_t$. Furthermore, for $i=1,2$, let
$G_i=G_t^{z_i}(\ii\eta_i):=(H_t^{z_i}-\ii\eta_i)^{-1}$ be the resolvent of $H_t^{z_i}:=W_t-Z_i$, with $|\eta_i|>0$ and (recall the definition of $F$ from \eqref{eq:defe1e2f})
\[
    Z_i:=\left(\begin{matrix}
            0              & z_i \\
            \overline{z_i} & 0
        \end{matrix}\right)=\overline{z_i}F+z_i F^*.
\]
In particular we let $\eta_i$ assume both positive and negative values. Note that along the flow \eqref{eq:OUaa}
the first two moments of $X_t$ are preserved, hence the deterministic approximation of $G_t^{z_i}(\ii\eta_i)$
is given by $M^{z_i}(\ii\eta_i)$ for any $t\ge 0$ (see \eqref{eq:singlegllaw}), i.e.
the deterministic approximation of $G_t^{z_i}(\ii\eta_i)$ is independent of time.

By \eqref{eq:OUaa} and It\^{o}'s formula,  the evolution of $\braket{G_t^{z_1}AG_t^{z_2}B}$, for any deterministic matrices $A$, $B$\footnote{ Here $B$ denotes a deterministic matrix while $B_t$ denotes a matrix whose entries are i.i.d. Brownian motions; we apologize for this slight abuse of notation.}, and for time-independent spectral parameters $\eta_i$ and $z_i$, is described by the following flow (recall the definition of $E_1,E_2$ from \eqref{eq:defe1e2f}):
\begin{equation}
    \label{eq:eqderres}
    \begin{split}
        \dif \braket{G_1AG_2B}&=\sum_\alpha\partial_\alpha\braket{G_1AG_2B}\frac{\dif (B_t)_\alpha}{\sqrt{n}}+\braket{G_1AG_2B}\dif t \\
        &\quad+2\braket{G_1AG_2E_1}\braket{G_2BG_1E_2}\dif t+2\braket{G_1AG_2E_2}\braket{G_2BG_1E_1}\dif t \\
        &\quad+ 2\braket{G_1E_1}\braket{G_1AG_2BG_1E_2}\dif t + 2\braket{G_1E_2}\braket{G_1AG_2BG_1E_1}\dif t \\
        &\quad+ 2\braket{G_2E_1}\braket{G_2BG_1AG_2E_2}\dif t+ 2\braket{G_2E_2}\braket{G_2BG_1AG_2E_1}\dif t \\
        &\quad+\frac{1}{2}\braket{(Z_1+\ii\eta_1)G_1AG_2BG_1}\dif t+\frac{1}{2}\braket{(Z_2+\ii\eta_2)G_2BG_1AG_2}\dif t.
    \end{split}
\end{equation}
Here $\alpha=(a,b)\in [2n]^2$ denotes a double index, and $\partial_\alpha$ denotes the directional derivative $\partial_{w_\alpha}$, with $w_\alpha=w_\alpha(t):=(W_t)_\alpha$. We point out that the summation $\sum_\alpha$ in \eqref{eq:eqderres} is restricted to either $a\le n$, $n<b\le 2n$ or $n<a\le 2n$, $b\le n$ even if not stated explicitly; we will use this notation throughout the paper.

We now allow both pairs of spectral parameters, $\eta_i$ and $z_i$, $i=1,2$, to be time dependent
in a specific way. Define
\[
    \Lambda_i:=\left(\begin{matrix}
            \ii\eta_i      & z_i       \\
            \overline{z_i} & \ii\eta_i
        \end{matrix}\right),
\]
and consider its time evolution along the  following differential equation (called the {\it characteristic equation})
\begin{equation}
    \label{eq:matchar}
    \partial_t\Lambda_{i,t}=-\frac{\Lambda_{i,t}}{2}-\mathcal{S}[M(\Lambda_{i,t})]
\end{equation}
with some initial condition $\Lambda_{i,0}$, with $\mathcal{S}$ being defined in \eqref{defS}.
Here we used the notation $M(\Lambda_{i,t}):=M^{z_{i,t}}(\ii\eta_{i,t})$. Written component-wise, we thus have that
\begin{equation}\label{comp1}
    \partial_t \eta_{i,t}=-\Im m^{z_i,t}(\ii \eta_{i,t})-\frac{\eta_{i,t}}{2}, \qquad\qquad\quad \partial_t z_{i,t}=-\frac{z_{i,t}}{2}.
\end{equation}
Since $\Im m^z(\ii\eta)$ is undefined for $\eta=0$, we will always run this flow up to a maximal time
$$T^*=
    T^*(\Lambda_{i,0})= T^*(z_{i,0}, \eta_{i,0}):=\sup \{ t\; : \; \sgn \eta_{i,t}=\sgn \eta_{i,0} \}
$$
to guarantee that $\eta_{i, t}$  never crosses the real axis.
Define $m_{i,t}:=m^{z_{i,t}}(\ii\eta_{i,t})$, $u_{i,t}:=u_{z_{i,t}}(\ii\eta_{i,t})$, and note that
their time dependence is particularly simple:
\begin{equation}
    \label{eq:timeev}
    m_{i,t}=e^{t/2}m_{i,0}, \qquad u_{i,t}= e^t u_{i,0}, \qquad z_{i,t}=e^{-t/2}z_{i,0}, \qquad \eta_{i,t}=e^{-t/2}\eta_{i,0}-(e^{t/2}-e^{-t/2})\Im m_{i,0}.
\end{equation}
Additionally, by \eqref{eq:expsmalleta} we have
$ \Im m_{i,0}^{z_{i,0}} = \mathrm{sgn}(\eta_{i,0})\sqrt{1-|z_{i,0}|^2}
    +\mathcal{O}(|\eta_{i,0}|)$, which, together with \eqref{eq:timeev}, for $t\ll 1$ gives $\eta_{i,t}=\eta_{i,0}- \mathrm{sgn}(\eta_{i,0})c_it$,
with some time dependent positive coefficient
\begin{equation}
    \label{eq:defci}
    c_i=c_i(t)= \sqrt{1-|z_{i,t}|^2}+\mathcal{O}(\eta_{i,0}).
\end{equation}
Note that $c_i$ is well separated from zero along the whole flow, as a consequence of the fact that if initially $|z_{i,0}|\le 1-\tau$  for some $\tau>0$ then  we also have $|z_{i,t}|\le 1-\tau$ for any $t\ge 0$.  In particular, this shows that $|\eta_{i,t}|=|\eta_{i,0}|-c_it$, so the flow approaches the real axis with
a speed of order one in the regime away from the non-Hermitian spectral edge $|z|=1$. This shows that the characteristics are monotone in time, i.e. $|\eta_{i,t}|\le |\eta_{i,s}|$ for $s\le t$.

Define $G_{i,t}:=(W_t-\Lambda_{i,t})^{-1}$, then combining \eqref{eq:eqderres} with \eqref{eq:matchar} and using that $Z_i+\ii\eta_i=\Lambda_i$, we get (in the r.h.s. we use the notation $G_i=G_{i,t}$ for simplicity):
\begin{equation}
    \label{eq:fulleqaa}
    \begin{split}
        \dif \braket{G_{1,t}AG_{2,t}B}&=\sum_\alpha\partial_\alpha\braket{G_1AG_2B}\frac{\dif B_\alpha}{\sqrt{n}}+\braket{G_1AG_2B}\dif t \\
        &\quad+2\braket{G_1AG_2E_1}\braket{G_2BG_1E_2}\dif t+2\braket{G_1AG_2E_2}\braket{G_2BG_1E_1}\dif t \\
        &\quad+ 2\braket{(G_1-M_1)E_1}\braket{G_1AG_2BG_1E_2}\dif t \\
        &\quad + 2\braket{(G_1-M_1)E_2}\braket{G_1AG_2BG_1E_1}\dif t \\
        &\quad+ 2\braket{(G_2-M_2)E_1}\braket{G_2BG_1AG_2E_2}\dif t \\
        &\quad+ 2\braket{(G_2-M_2)E_2}\braket{G_2BG_1AG_2E_1}\dif t.
    \end{split}
\end{equation}
Note that the careful choice of the characteristic ODE~\eqref{eq:matchar} guarantees
that the last line of~\eqref{eq:eqderres} and the leading terms of the third and fourth lines  of~\eqref{eq:eqderres} cancel.

\begin{remark}
    Beside the spectral parameter $\eta_t$ our characteristic flow
    also moves the additional  parameter $z_t$;   previous
    applications of the flow method operated only with moving  the spectral parameter.
    The main cancellation concerns
    the   $\langle \ii\eta GBGAG\rangle$ term in~\eqref{eq:eqderres}
    and this could be achieved by the correct  choice of the $\eta_t$-flow alone.
    However, our choice of time dependent $z_t$ also cancels the $\langle ZGBGAG\rangle$
    term automatically, saving us from additional work to estimate it using the
    off-diagonality of $Z$. The canonical  form~\eqref{eq:matchar} of the flow seems the most efficient,
    and it also  gives a hint how to find the best characteristic flow for much more general ensembles.
\end{remark}

The main result of this section is the following Proposition~\ref{pro:charaveinfin} below
that shows how a two-resolvent
local law at large $\eta$ can be propagated to smaller $\eta$.   In Part 1 we formulate
a general estimate, which will then be improved in Part 2 for the special case
when $|z_1-z_2|$ is relatively big compared with $\eta$. Note that both results
are conditional: given a (small) $\eta$ where we want to prove the local law,
we construct a (larger) $\eta_0$ such that after time $T$ the characteristic  flow \eqref{comp1} with initial
condition $\eta_0$ ends up precisely at our target  $\eta=\eta_T$.
Assuming the local law at $\eta_0$, this proposition proves the local law at  $\eta=\eta_T$.

Before stating Proposition~\ref{pro:charaveinfin}, in the following lemma we prove
a simple property of the characteristics in \eqref{comp1}.
Recall that $T^*=T^*(z, \eta)$ is the maximal time so that the $\eta_t$-flow with initial condition $z, \eta$ does
not cross the real axis.

\begin{lemma}
    \label{lem:charpropert}
    Fix $n$-independent \nc
    $\tau,\omega_1>0$, and pick any $|\eta|>0$, $0< T\le n^{-\omega_1}$, $|z|\le 1-\tau$. Then, there exist
    an initial data $\eta_0,z_0$, with $T^*(z_0, \eta_0)\ge T$,
    such that the solutions to~\eqref{comp1}  with these initial data satisfy $\eta_T=\eta$, $z_T=z$ after time $T$. We have $|\eta_0|\gtrsim T$ and $|z_0|\le 1-\tau/2$.
\end{lemma}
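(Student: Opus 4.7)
The plan is to construct the required initial data by running the characteristic flow \eqref{comp1} backwards in time for duration $T$ starting from the target $(\eta,z)$. Since the $\eta$- and $z$-components decouple in \eqref{comp1}, I would handle them separately.

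For the $z$-component the flow is autonomous and linear, with explicit solution $z_t=e^{-t/2}z_0$. Setting $z_0:=e^{T/2}z$ forces $z_T=z$. Since $T\le n^{-\omega_1}$, for $n$ large enough $e^{T/2}\le 1+\tau/(2(1-\tau))$, so $|z_0|\le (1-\tau)e^{T/2}\le 1-\tau/2$. Along the entire trajectory $|z_t|=e^{(T-t)/2}|z|\le |z_0|\le 1-\tau/2$ for all $t\in[0,T]$, which secures the bulk condition needed to control $m^{z_t}(\ii\eta_t)$.

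For the $\eta$-component (WLOG $\eta>0$; the opposite sign is symmetric), introduce the time-reversed unknown $\tilde\eta_s:=\eta_{T-s}$, which satisfies the scalar ODE
\begin{equation*}
\partial_s\tilde\eta_s = \Im m^{\tilde z_s}(\ii\tilde\eta_s) + \tfrac{1}{2}\tilde\eta_s,\qquad \tilde\eta_0=\eta,
\end{equation*}
with $\tilde z_s:=z_{T-s}$ already determined by Step~1. Both terms on the right are strictly positive whenever $\tilde\eta_s>0$, so any solution is strictly increasing and stays in $(0,\infty)$. Using the expansion \eqref{eq:expsmalleta} on the bulk region $|\tilde z_s|\le 1-\tau/2$, the right-hand side is smooth and bounded by $C(1+\tilde\eta_s)$, so Picard iteration plus Gr\"onwall yields a unique solution on $[0,T]$ with $\tilde\eta_s=\mathcal O(1)$. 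Setting $\eta_0:=\tilde\eta_T$ produces initial data whose forward flow reaches $(\eta,z)$ at time $T$; since $\eta_t>0$ for every $t\in[0,T]$ by construction, the sign does not change and $T^*(z_0,\eta_0)\ge T$.

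Finally, for the lower bound $|\eta_0|\gtrsim T$, combine $|\tilde z_s|\le 1-\tau/2$ and $\tilde\eta_s=\mathcal O(1)$ with \eqref{eq:expsmalleta} to get $\Im m^{\tilde z_s}(\ii\tilde\eta_s)\ge c_\tau>0$ uniformly in $s\in[0,T]$. Integrating $\partial_s\tilde\eta_s\ge c_\tau$ from $0$ to $T$ yields $\eta_0=\tilde\eta_T\ge \eta+c_\tau T\ge c_\tau T$, which is the asserted bound. The only genuine point to check is the uniform positive lower bound on $\Im m$ along the reversed trajectory; everything else is routine ODE theory. I expect this to be the mildest of obstacles since the bulk assumption $|z|\le 1-\tau$ is preserved under the flow by design.
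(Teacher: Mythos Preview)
Your argument is correct and follows essentially the same route as the paper: solve the autonomous $z$-equation explicitly, then run the $\eta$-characteristic backward from the target using the bulk condition $|z_t|\le 1-\tau/2$ to secure a uniform positive lower bound on $\Im m^{z_t}(\ii\eta_t)$ and hence linear speed. The paper's proof is terser but relies on exactly the same observation; one small remark is that \eqref{eq:expsmalleta} is stated for small $\eta$, so strictly speaking you should either note that $\tilde\eta_s$ stays small since $T\le n^{-\omega_1}$, or (when $\eta\gtrsim T$) simply use that the backward flow is increasing so $\eta_0\ge\eta\gtrsim T$ trivially.
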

\begin{proof}
    This lemma is a simple consequence of the fact that the flow $t\to \eta_{i,t}$, given in the last equation of \eqref{eq:timeev}, moves toward the real axis with a linear speed that is well separated from zero. To establish this fact, we use that, since the time $T\ll 1$ is short, the right hand side of the second  equation of \eqref{comp1} is bounded and initially $|z_0|$ is well separated from $1$, we see that $|z_t|\le 1-\tau/2$ for all $t\le T\le n^{-\omega_1}$, in
    particular we stay in the bulk regime for all  $t\in[0,T]$. This guarantees that $|\Im m^z| \ge c$, with some small $n$-independent constant $c>0$ along the whole solution up to time $T$. Thus the right hand side of the first equation in~\eqref{comp1} is negative, well separated away from zero for all $t\in [0,T]$. This establishes the linear speed of $\eta_{i,t}$. Therefore, if we are given some $\eta$, $z$, and $T$, with $T\ll 1$ and $z$ in the bulk of the spectrum of $X$, then by running this approximately linear flow backward in time we can find initial values $z_0,\eta_0$ as required in the lemma.
\end{proof}

\begin{proposition}
    \label{pro:charaveinfin}
    Fix small $n$-independent constants $\epsilon,\tau, \omega_d>0$, and, for $i=1,2$, let
    \[
        \Lambda_{i,0}=\left(
        \begin{matrix}
                \ii\eta_{i,0}      & z_{i,0}       \\
                \overline{z_{i,0}} & \ii\eta_{i,0}
            \end{matrix}
        \right)
    \]
    with $|z_{i,0}| \le 1- \tau$, $|z_{1,0}-z_{2,0}| \le n^{-\omega_d}$ and $\eta_{i,0}\ne 0$. %
    Let $\Lambda_{i,t}$ be the solution of \eqref{eq:matchar}  with initial condition $\Lambda_{i,0}$
    for any $t\le T^*(z_{i,0}, \eta_{i,0})$.
    Set $G_{i,t}:=(W_t-\Lambda_{i,t})^{-1}$, and denote the deterministic approximation of
    $G_{1,t} A G_{2,t}$ by  $M_{12,t}^A := M_{12}^A(z_{1,t},\ii \eta_{1,t}, z_{2,t}, \ii \eta_{2,t})$
    as  given in \eqref{eq:defM12A}. Then we have the following statements:

    {\bf Part 1.}
    Choose $\eta_{i,0}$ such that $|\eta_{i,0}|\le n^{-\omega_1}$ for some $\omega_1>0$.
    Assume that
    for any arbitrary small $\xi>0$ it holds
    \begin{equation}
        \label{eq:incond}
        \big|\braket{G_{1,0}AG_{2,0}B-M_{12,0}^AB}\big|\le \frac{n^\xi}{n\eta_{*,0}\sqrt{|\eta_{1,0}\eta_{2,0}|}}
    \end{equation}
    with very high probability, uniformly for
    matrices $A, B$ with $\norm{A}+\norm{B}\le 1$. Then
    \begin{equation}
        \begin{split}
            \label{eq:largeeta}
            \big|\braket{G_{1,T}AG_{2,T}B-M_{12,T}^AB}\big|\le\frac{n^{2\xi}}{n\eta_{*,T}\sqrt{|\eta_{1,T}\eta_{2,T}|}},
        \end{split}
    \end{equation}
    with very high probability, uniformly in  $T \le \min_i T^*(z_{i,0}, \eta_{i,0})$
    such that $\eta_{*,T}:=\min\{ |\eta_{1, T}|, |\eta_{2, T}|\}
        \ge n^{-1+\epsilon}$ and uniformly in matrices $A,B$ with $\norm{A}+\norm{B}\le 1$.

        {\bf Part 2.} Choose $\eta_{i,0}$ such that $|\eta_{i,0}|\lesssim |z_{1,0}-z_{2,0}|^2$.   %
    Assume that for any arbitrary small $\xi>0$ it holds
    \begin{equation}
        \label{eq:incondsmalleta}
        \big|\braket{G_{1,0}AG_{2,0}B-M_{12,0}^AB}\big|\le n^\xi \mathcal{E}(n,\eta_{1,0},\eta_{2,0}),
    \end{equation}
    with very high probability uniformly in matrices $A, B$ with $\norm{A}+\norm{B}\le 1$, for some given error function
    $ \mathcal{E}(n,\eta_{1,0},\eta_{2,0})\lesssim (n\eta_{*,0}^2)^{-1}$. Then we have
    \begin{equation}
        \label{eq:smalleta}
        \big|\braket{G_{1,T}AG_{2,T}B-M_{12,T}^AB}\big|\le \frac{n^{2\xi}}{n\sqrt{|\eta_{1,T}\eta_{2,T}|\eta_{*,T}(\eta_T^*+|z_{1,T}-z_{2,T}|^2)}}+\frac{n^{3\xi}}{(n\eta_{*,T})^{3/2}\sqrt{|\eta_{1,T}\eta_{2,T}|}}+n^{2\xi} \mathcal{E}(n,\eta_{1,0},\eta_{2,0}),
    \end{equation}
    with very high probability  uniformly in  $T \le \min_i T^*(z_{i,0}, \eta_{i,0})$
    such that $\eta_{*,T}\ge n^{-1+\epsilon}$  and  uniformly in matrices $A,B$ with $\norm{A}+\norm{B}\le 1$.
    Here $\eta_T^*:=\max\{ |\eta_{1,T}|, |\eta_{2,T}|\}$.
\end{proposition}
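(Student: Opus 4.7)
The strategy is to derive a self-consistent stochastic differential inequality for
$$Y_t := \braket{G_{1,t}AG_{2,t}B - M_{12,t}^A B}$$
by combining the It\^o expansion \eqref{eq:fulleqaa} with the time-derivative of the deterministic approximation $M_{12,t}^A$ along the characteristic flow \eqref{eq:matchar}. The whole point of the matrix characteristic ODE is that the \emph{unstable direction} of the stability operator $\mathcal{B}_{12,t}$ is annihilated along the flow, so that $\|\mathcal{B}_{12,t}^{-1}\|$ never appears in the estimates. Once this cancellation is identified, the evolution of $Y_t$ takes the form
$$dY_t \;=\; dM_t \;+\; \mathcal{L}_t[Y_t]\, dt \;+\; \mathcal{R}_t\, dt,$$
where $M_t$ is a martingale, $\mathcal{L}_t$ is a linear map whose norm is $\landauO{1}$ throughout the flow (thanks to the characteristics), and $\mathcal{R}_t$ is an explicit forcing made of products of single-resolvent errors controlled by \eqref{eq:singlegllaw}.

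\textbf{Step 1 (cancellation).} The leading contributions to the last six drift terms in \eqref{eq:fulleqaa} are the \emph{deterministic} pieces $\braket{M_1 A M_2 E_k}\braket{M_2 B M_1 E_{k'}}$, obtained by replacing $G_{i,t}$ by $M_{i,t}$. Using the definition $M_{12,t}^A=(1-M_{1,t}\mathcal{S}[\cdot]M_{2,t})^{-1}[M_{1,t}AM_{2,t}]$ together with \eqref{eq:matchar}, a direct computation shows that these deterministic pieces cancel exactly with $\partial_t\braket{M_{12,t}^A B}$. What remains in the drift of $Y_t$ involves at least one factor of the type $\braket{(G_{i,t}-M_{i,t})E_k}$, a single-resolvent local law quantity of size $\landauOprec{1/(n|\eta_{i,t}|)}$, or of the type $\braket{\cdots(G_{i,t}-M_{i,t})\cdots}$, producing the structure above.

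\textbf{Step 2 (bootstrap for Part 1).} Define the envelope $\Psi_t^{(1)} := n^{2\xi}/(n\eta_{*,t}\sqrt{|\eta_{1,t}\eta_{2,t}|})$ and the stopping time $\tau := \inf\{t\in[0,T]: |Y_t|>\Psi_t^{(1)}\}\wedge T$. On $[0,\tau]$ the hypothesis gives $|Y_s|\le \Psi_s^{(1)}$, and one checks that $|\mathcal{R}_s|\lesssim n^{\xi}/(n^2\eta_{*,s}^2|\eta_{1,s}\eta_{2,s}|^{1/2})$, which is smaller than $\partial_s\Psi_s^{(1)}$ since $\partial_s\eta_{i,s}\sim-1$ from \eqref{eq:defci}. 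The martingale part $M_t$ is handled by estimating its quadratic variation
$$\int_0^\tau \frac{1}{n}\sum_\alpha\bigl|\partial_\alpha \braket{G_{1,s}AG_{2,s}B}\bigr|^2 ds,$$
which by expanding the derivatives into sums of three-resolvent chains and using a Ward identity plus the bootstrap hypothesis and \eqref{eq:singlegllaw} is bounded by $(\Psi_s^{(1)})^2/n\eta_{*,s}$, and then by BDG-type moment estimates. Gronwall's lemma together with the monotonicity $|\eta_{i,t}|\le |\eta_{i,s}|$ for $s\le t$ from \eqref{eq:timeev} yields $|Y_t|\le \tfrac12\Psi_t^{(1)}$ on $[0,\tau]$ with very high probability, hence $\tau=T$, proving \eqref{eq:largeeta}.

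\textbf{Step 3 (Part 2).} When $|\eta_{i,0}|\lesssim|z_{1,0}-z_{2,0}|^2$, one has $\norm{M_{12,t}^A}\lesssim 1/(\eta_t^*+|z_{1,t}-z_{2,t}|^2)$ as stated in the proposition, and $|z_{1,t}-z_{2,t}|^2\sim|z_{1,0}-z_{2,0}|^2$ for $t\le T\ll 1$ by \eqref{eq:timeev}. Re-running the bootstrap with the improved envelope
$$\Psi_t^{(2)} := \frac{n^{2\xi}}{n\sqrt{|\eta_{1,t}\eta_{2,t}|\eta_{*,t}(\eta_t^*+|z_{1,t}-z_{2,t}|^2)}} + \frac{n^{3\xi}}{(n\eta_{*,t})^{3/2}\sqrt{|\eta_{1,t}\eta_{2,t}|}} + n^{2\xi}\mathcal{E}(n,\eta_{1,0},\eta_{2,0}),$$
the factor $(\eta_t^*+|z_{1,t}-z_{2,t}|^2)^{-1}$ in $\norm{M_{12,t}^A}$ is inherited by the deterministic parts of $\mathcal{R}_t$ that involve $M_{12,t}^A$, while the purely single-resolvent contributions produce the second summand in $\Psi_t^{(2)}$, and the initial $\mathcal{E}$-term enters as $\landauO{n^{2\xi}\mathcal{E}}$ from the Gronwall constant. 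The two-step propagation (first Step~1 at large $\eta$, then iterating it on a finer grid as $\eta\to0$) is enforced through the monotonicity of the characteristics.

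\textbf{Main obstacle.} The hardest point is Step~1: establishing the exact algebraic cancellation that renders $\mathcal{L}_t$ bounded in norm by $\landauO{1}$ along the entire flow, rather than by $\|\mathcal{B}_{12,t}^{-1}\|$. This is precisely what the \emph{matrix} form of the characteristics \eqref{eq:matchar}, which couples the evolution of $z_{i,t}$ and $\eta_{i,t}$, achieves and what distinguishes the present argument from all previous applications of the characteristic flow method. A secondary technical obstacle is to bound the martingale contribution at the required precision, since the naive bound on the quadratic variation loses a factor of $\eta_{*,t}^{-1}$; this must be recovered using the specific cancellation of the $\braket{(Z_i+\ii\eta_i)G_iAG_jBG_i}$ terms in~\eqref{eq:eqderres} against $\partial_t M_{12,t}^A$ exhibited in Step~1.
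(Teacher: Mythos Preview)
Your proposal has a genuine gap in Step~1, and the misunderstanding propagates through Steps~2--3.

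You claim that the characteristics render the linear operator $\mathcal{L}_t$ bounded in norm by $\landauO{1}$, i.e.\ that ``the unstable direction of $\mathcal{B}_{12,t}$ is annihilated along the flow''. This is false. The characteristics \eqref{eq:matchar} cancel the three-$G$ terms $\braket{(Z_i+\ii\eta_i)G_iAG_jBG_i}$ in \eqref{eq:eqderres} and convert $\braket{G_iE_k}$ into $\braket{(G_i-M_i)E_k}$ in the third through sixth lines of \eqref{eq:fulleqaa}. They do \emph{not} touch the second line, $\braket{\mathcal{S}[G_1AG_2]G_2BG_1}$. After subtracting the deterministic part $\braket{\mathcal{S}[M_{12}^A]M_{21}^B}$ (which does cancel with $\partial_t\braket{M_{12,t}^AB}$ via Lemma~\ref{lem:mder}), the linearization around $M_{12}^A$, $M_{21}^B$ produces a term of the form $\braket{(G_1AG_2-M_{12}^A)E_k}\braket{M_{21}^BE_{k'}}$. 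The coefficient $\braket{M_{21}^BE_{k'}}$ is of size $|\beta_-|^{-1}\sim(\eta^*_t+|z_{1,t}-z_{2,t}|^2)^{-1}$, not $\landauO{1}$. With $\|\mathcal{L}_t\|$ of this size, a naive Gronwall over $[0,T]$ with $T\sim\eta_{*,0}$ would produce an uncontrolled factor, and your bootstrap as written collapses.

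The paper's mechanism is different in both parts. For Part~1, the matrices $A,B$ are first split along the spectral projection $\Pi_-$ of $\mathcal{B}_{12}^*$; on the range of $\Pi$ the bound follows directly from a non-dynamical argument (Lemma~\ref{pro:imppro}), with no flow. Only on the one-dimensional unstable direction $A=L_{21,-}$, $B=L_{12,-}$ does one run the flow, and there the linear coefficient is explicitly identified as $2\braket{M_{12,t}^I}$ (Lemma~\ref{lem:newlem1}). This is large, but its time integral is computed exactly (Lemma~\ref{lem:m12bound} and \eqref{eq:intgronwall}--\eqref{eq:fin12}): the Gronwall factor equals $\eta_{*,s}\sqrt{|\eta_{1,s}\eta_{2,s}|}/(\eta_{*,t}\sqrt{|\eta_{1,t}\eta_{2,t}|})$, which precisely matches the growth of the envelope $\Psi_t^{(1)}$. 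For Part~2, no spectral splitting is needed because now $\|\mathcal{L}_t\|\lesssim|z_{1,t}-z_{2,t}|^{-2}$ and the assumption $T\lesssim|z_{1,0}-z_{2,0}|^2$ makes the Gronwall factor $\landauO{1}$ --- but this is a feature of the regime, not of the characteristics. You need to supply both of these ingredients: the spectral decomposition plus the explicit Gronwall computation in Part~1, and the use of $T\lesssim|z_1-z_2|^2$ in Part~2.
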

Note that the difference between \eqref{eq:largeeta} and \eqref{eq:smalleta} lies in the fact that in  \eqref{eq:smalleta}
the leading error term is smaller. However,  this bound is a genuine  improvement compared to
\eqref{eq:largeeta} only for $\eta_T^*\lesssim |z_{1,T}-z_{2,T}|^2$.
The bounds \eqref{eq:largeeta}, \eqref{eq:smalleta} agree for
$\eta_T^*\gtrsim |z_{1,T}-z_{2,T}|^2$ and $|\eta_{1,T}|\sim |\eta_{2,T}|$.

\begin{proof}[Proof of Proposition~\ref{pro:charaveinfin}]

    At the
    beginning the proofs of both parts
    will be presented together
    and then we will specialize to the two cases later. In the sequel we often omit the $t$-dependence
    and use $G_i := G_{i,t}= (W_t-\Lambda_{i,t})^{-1}$, $M_i:= M^{z_{i,t}}(\ii \eta_{i,t})$, and a similar definition for $m_i$, $u_i$.
    Using the Schwarz inequality and the Ward identity $GG^*=\Im G/\eta$ we have
    \begin{equation}
        \label{eq:reflat}
        \begin{split}
            |\braket{G_1AG_2BG_1E_2}|&\le  \braket{G_1AG_2G_2^*A^*G_1^*}^{1/2}\braket{BG_1E_2E_2^*G_1^*B^*}^{1/2} \\
            & \le\frac{\braket{\Im G_1A\Im G_2 A^*}^{1/2}\braket{\Im G_1B^*B}^{1/2}}{|\eta_{1,t}|\sqrt{|\eta_{2,t}|}}\prec \frac{|\braket{\Im G_1 A\Im G_2 A^*}|^{1/2}\norm{B} }{\sqrt{\eta_{*,t}|\eta_{1,t}\eta_{2,t}|}}.
        \end{split}
    \end{equation}
    We point out that in the second inequality we used that
    \[
        \braket{BG_1E_2E_2^*G_1^*B^*}\le \lVert E_2\rVert^2 \braket{BG_1G_1^*B^*}\le \braket{BG_1G_1^*B^*}= \frac{\braket{\Im G_1B^*B}}{\eta_{1,t}},
    \]
    where the last equality follows by Ward identity, and in the last inequality of \eqref{eq:reflat} we used
    \[
        \braket{\Im G_1B^*B}\le \lVert BB^*\rVert \braket{\Im G_1} \lesssim \lVert B\rVert^2 \big( \braket{\Im M_1}+| \braket{\Im G_1-\Im M_1}|\big)\prec  \lVert B\rVert^2+\frac{ \lVert B\rVert^2}{n|\eta_{1,t}|}\lesssim \lVert B\rVert^2,
    \]
    which follows by the imaginary part of the local law $|\braket{G_1-M_1}|\prec 1/(n|\eta_{1,t}|)$ from \eqref{eq:singlegllaw}. We remark that the key point in the estimate \eqref{eq:reflat} is that the $G_1AG_2$ block is separated from the rest
    and the estimate reduces a trace with three $G$'s to one with two $G$'s
    (up to imaginary part), i.e. it is of the similar form as the left hand side
    in Proposition~\ref{pro:charaveinfin}. This allows us to have a closed Gronwall-like inequality for products with two resolvents. We point out that this philosophy will be often used within the proof of Proposition~\ref{pro:charaveinfin} (see e.g. \eqref{eq:estquadvar} below).

    Using the bound \eqref{eq:reflat} in \eqref{eq:fulleqaa}, together with the single resolvent local law \eqref{eq:singlegllaw}, we get
    \begin{equation}
        \begin{split}
            \label{eq:fulleqaaaftsimpl}
            \dif \braket{G_{1,t}AG_{2,t}B}&=\sum_\alpha\partial_\alpha\braket{G_1AG_2B}\frac{\dif B_\alpha}{\sqrt{n}}+\braket{G_1AG_2B}\dif t \\
            &\quad+\braket{\mathcal{S}[G_1AG_2]G_2BG_1}\dif t+\mathcal{O}\left(\frac{n^\xi |\braket{\Im G_1 A\Im G_2A^*}|^{1/2}}{n\eta_{*,t}^{3/2}\sqrt{|\eta_{1,t}\eta_{2,t}|}}\right)\dif t.
        \end{split}
    \end{equation}
    Here we estimated the error in terms of $|\braket{\Im G_1 A\Im G_2A^*}|^{1/2}$ and ignored $\norm{B}$
    for brevity (recall that $\norm{A}+\norm{B}\le 1$). Note that to go from \eqref{eq:fulleqaa}
    to \eqref{eq:fulleqaaaftsimpl} we also used that from \eqref{defS} we have
    \[
        2\braket{G_1AG_2E_1}\braket{G_2BG_1E_2}+2\braket{G_1AG_2E_2}\braket{G_2BG_1E_1}=\braket{\mathcal{S}[G_1AG_2]G_2BG_1}.
    \]

    We now consider the stochastic term in \eqref{eq:fulleqaaaftsimpl}.
    We first estimate its quadratic variation and then use the Burkholder--Davis--Gundy (BDG) inequality to conclude
    a bound with very high probability.  Let $\mathcal{F}_t$ denote the filtration generated
    by $(B_s)_{0\le s\le t}$, with $B_t$ from \eqref{eq:OUaa}.
    The quadratic variation
    process of $n^{-1/2}\sum_\alpha\partial_\alpha\braket{G_1AG_2B}\dif B_\alpha$ is given by
    \begin{equation}
        \label{eq:quadvar}
        \begin{split}
            &\E\left[ \frac{1}{n}\sum_{\alpha,\beta}\big[\braket{G_1\Delta^\alpha G_1AG_2B}+\braket{G_1AG_2\Delta^\alpha G_2 B}\big]\cdot \big[\overline{\braket{G_1\Delta^\beta G_1AG_2B}}+\overline{\braket{G_1AG_2\Delta^\beta G_2 B}}\big] \dif B_\alpha\overline{\dif B_\beta}\Bigg| \mathcal{F}_t\right] \\
            &=\frac{1}{n^2}\sum_{ab}\big[(G_1AG_2BG_1)_{ab}+(G_2BG_1AG_2)_{ab}\big]\cdot
            \big[\overline{(G_1AG_2BG_1)_{ab}}+\overline{(G_2BG_1AG_2)_{ab}}\big]\, \dif t \\
            &=\frac{1}{n^2}\Bigg[\braket{G_1AG_2BG_1 E_i G_1^*B^* G_2^*A^*G_1^*E_j}+
                \braket{G_2BG_1AG_2 E_iG_2^* A^* G_1^*B^*G_2^*E_j}
                +2\Re \braket{G_1AG_2BG_1E_iG_2^*A^*G_1^*B^*G_2^*E_j}\Bigg]\dif t,
        \end{split}
    \end{equation}
    where $\alpha, \beta$ denote index pairs, and $(\Delta^{ab})_{cd}=\delta_{ac}\delta_{bd}$.
    In the last line of \eqref{eq:quadvar} the indices $i,j$ are summed over two pairs $(i,j)\in \{(1,2),(2,1)\}$.
    Similarly to \eqref{eq:reflat}, by Schwarz inequality (performed by separating the block $G_1AG_2$ from the rest),
    it is easy to see that the quadratic variation is bounded by a multiple of (recall that $\norm{A}+\norm{B}\le 1$)
    \begin{equation}
        \label{eq:estquadvar}
        \frac{|\braket{\Im G_1A\Im G_2A^*}|}{n^2\eta_{*,t}^2|\eta_{1,t}\eta_{2,t}|}.
    \end{equation}
    Here we also used that $\norm{G_i}\le |\eta_{i,t}|^{-1}$ deterministically, and that $|\braket{G_i}|\le 1$ with very high probability by the single resolvent local law \eqref{eq:singlegllaw}. Then by the martingale inequality \cite[Appendix B, Eq. (18)]{10.1137/1.9780898719017}, with $c=0$ for continuous martingales, we conclude
    \begin{equation}
        \label{eq:BDG}
        \sup_{0\le t \le T}\left|\int_0^t \sum_\alpha\partial_\alpha\braket{G_1AG_2B}\frac{\dif B_\alpha}{\sqrt{n}}\right| \lesssim  n^\xi \left(\int_0^T \frac{|\braket{\Im G_1A\Im G_2A^*}|}{n^2\eta_{*,t}^2|\eta_{1,t}\eta_{2,t}|}\, \dif t\right)^{1/2},
    \end{equation}
    with very high probability. Recall that $0<T\ll 1$ is fixed.

    Combining \eqref{eq:fulleqaaaftsimpl} and \eqref{eq:BDG}, we get the integral equation
    \begin{equation}
        \label{eq:fulleqanewaa}
        \begin{split}
            \braket{G_{1,T}AG_{2,T}B}&=\braket{G_{1,0}AG_{2,0}B}+\int_0^T\braket{\mathcal{S}[G_{1,t}AG_{2,t}]G_{2,t}BG_{1,t}}\,\dif t+\int_0^T\braket{G_{1,t}AG_{2,t}B}\,\dif t \\
            &\quad+\mathcal{O}\left(n^\xi \left(\int_0^T \frac{|\braket{\Im G_{1,t}A\Im G_{2,t}A^*}|}{n^2\eta_{*,t}^2|\eta_{1,t}\eta_{2,t}|}\, \dif t\right)^{1/2}+n^\xi \int_0^T \frac{|\braket{\Im G_{1,t}A\Im G_{2,t}A^*}|^{1/2}}{n\eta_{*,t}^{3/2}\sqrt{|\eta_{1,t}\eta_{2,t}|}}\, \dif t\right),
        \end{split}
    \end{equation}
    with very high probability, ignoring the $B$-error terms.
    We now start distinguishing the proof of  \eqref{eq:largeeta} and \eqref{eq:smalleta}.

    \subsection*{Proof of Part 1.}   Recall that the \emph{stability operator},
    defined as $\mathcal{B}_{12}:= 1- M_1 \mathcal{S} [\cdot ]
        M_2$, with $M_i= M^{z_i}(\ii\eta_i)$, acts on the Hilbert space of $(2n)\times (2n)$ matrices
    equipped with the usual Hilbert-Schmidt scalar product.
    It will play a key role in the analysis, in fact we will need to compute the inverse of its adjoint
    $[\mathcal{B}_{12}^*]^{-1}$.
    In Appendix~\ref{app:evect} we collected all precise
    information on the eigenvalues and left/right eigenvectors\footnote{These are actually matrices, but
        we will call them eigenvectors since we view them as elements of the vector space of $(2n)\times (2n)$  matrices.}
    of $\mathcal{B}_{12}$, giving immediately
    the spectral data of $\mathcal{B}_{12}^*$ as well.
    First, the $2\times 2$ block structure of $M_i$ and $\mathcal{S}$ shows that $\mathcal{B}_{12}$
    and $\mathcal{B}_{12}^*$ are just the identity on the $4n^2-4$ dimensional subspace
    of block traceless matrices. So effectively we need to understand $\mathcal{B}_{12}$
    on the four dimensional  subspace of block constant matrices that is invariant
    for both $\mathcal{B}_{12}$  and $\mathcal{B}_{12}^*$.
    The main point
    is that in our relevant regime,  $|\eta_i|\le n^{-\omega_2}$,  $|z_i|\le 1-\tau$ and  $|z_1-z_2|\le n^{-\omega_d}$,
    only one simple eigenvalue, denoted by $\beta_-$, is very small,
    a second eigenvalue, denoted by $\beta_+$ is well separated away from zero,
    \begin{equation}\label{betam}
        |\beta_-| \sim |z_1-z_2|^2 + \eta_1 + \eta_2, \qquad |\beta_+|\sim 1,
    \end{equation}
    see  \eqref{eq:lowbeigva},
    and all the other eigenvalues are 1
    with multiplicity $4n^2-2$.
    The left and right eigenvectors corresponding to $\beta_\pm$,
    denoted by $L_\pm=L_{12,\pm}$ and $R_\pm= R_{12,\pm}$,
    are block constant matrices, defined to
    satisfy\footnote{Here we deliberately use the convention that a left eigenvector $L$ is \emph{defined}
        such that $L^*$ is the right eigenvector of the adjoint operator.
        This convention will simplify many formulas below. }
    \begin{equation}\label{LR}
        \mathcal{B}_{12}[R_{\pm}]=\beta_\pm R_{\pm}, \qquad
        \mathcal{B}_{12}^*[L_{\pm}^*]=\overline{\beta_\pm}L_{\pm}^*,
    \end{equation}
    noting that
    the two nontrivial eigenvalues  of $\mathcal{B}_{12}^*$ are $\overline{\beta_\pm}$
    (here we dropped the 12 indices).
    Explicit formulas are given in Appendix~\ref{app:evect} but they  are largely  irrelevant for us,
    here we only remark that it is possible
    to choose  the  normalization such that $\| L_\pm\|\sim 1$, $\| R_\pm\|\sim 1$ and $|\langle R_-^* L_-^*\rangle|\sim 1$.
    In fact, the spectral data
    corresponding to $\beta_+$, as well as eigenvectors
    corresponding to the trivial eigenvalues 1
    will not be necessary for our main argument.
    Also notice that the eigenvalues of $\mathcal{B}_{12}$
    and $\mathcal{B}_{21}$ are the same (see~\eqref{eq:evalues} in Appendix~\ref{app:evect}), but their
    eigenvectors are not identical, e.g. $R_{12,\pm}\ne R_{21,\pm}$.

    The inverse of  $\mathcal{B}_{12}^*$ can be computed by its spectral decomposition,
    separating the one dimensional (non-orthogonal) spectral projection
    $\Pi_-$ %
    corresponding
    to the small eigenvalue $\bar\beta_-$ from the spectral projection $\Pi$ corresponding to all the other
    eigenvalues.  Explicitly,
    any matrix $Q$ can be decomposed as
    \begin{equation}
        \label{eq:goodspecdec}
        Q=\Pi_-[Q]+\Pi[Q], \qquad \Pi_- [Q]:= \frac{\braket{R_-^*Q}}{\braket{R_-^*L_-^*}}L_-^*, \qquad
        \Pi[Q]:=\frac{1}{2\pi\ii}\oint \frac{1}{z-\mathcal{B}_{12}^*}[Q]  \,\dif z,
    \end{equation}
    with the integral over a contour which encircles $\overline{\beta_+}$ and $1$, excludes $\overline{\beta_-}$
    and is well separated (order one away) from all eigenvalues.
    The resolvent $(z-\mathcal{B}_{12}^*)^{-1}$ can be viewed only on the four dimensional invariant
    subspace of block constant matrices, hence its norm is bounded as all four eigenvalues of $z-\mathcal{B}_{12}^*$ are
    well separated away from zero when $z$ is on the contour. Thus both $\Pi$ and $\Pi_-$ are bounded;
    \begin{equation}\label{Pi}
        \| \Pi\| + \| \Pi_-\|\lesssim 1.
    \end{equation}
    We clearly have that $(\mathcal{B}_{12}^*)^{-1}$ is bounded on the range of $\Pi$, i.e. for any matrix $Q$
    \begin{equation}
        \label{eq:fundboundA}
        \norm{(\mathcal{B}_{12}^*)^{-1})[\Pi[Q]]}\lesssim \norm{Q}.
    \end{equation}
    Since $(\mathcal{B}_{12}^*)^{-1}$ on the complementary  one dimensional spectral subspace $\mbox{Span}(L_-^*)$
    can be very large, of order $|\beta_-|^{-1}$, this subspace requires a separate treatment.
    Note that all these bounds trivially extend to $(\mathcal{B}_{12}^*)^{-1}$ viewed on the space of all $(2n)\times (2n)$
    matrices due to the invariance of the space of block traceless matrices.

    Owing to the time evolution, we actually need the spectral data for
    $\mathcal{B}_{12,t}[\cdot]:=(1-M_{1,t}\mathcal{S}[\cdot]M_{2,t})$ for small times $t\ge 0$.
    Let $L_{\pm,t}$ and $R_{\pm,t}$, be the left and right eigenvectors of $\mathcal{B}_{12,t}$ with
    corresponding eigenvalues $\beta_{\pm,t}$, then by \eqref{eq:timeev} it readily follows that $L_{\pm,t}= L_{\pm,0}$, $R_{\pm,t}=e^tR_{\pm,0}$ for any $t\ge 0$. Since the difference between the eigenvectors at
    time $0$ and time $t$  amounts to a simple rescaling by an irrelevant factor $e^t=1+\mathcal{O}(t)$,
    we can use the zero-time eigenvectors $L_\pm:=L_{\pm,0}$, $R_\pm:=R_{\pm,0}$
    for all later times.
    The eigenvalues $\beta_{\pm,t}$ depend on $t$ nontrivially   but
    smoothly and the time dependent version of~\eqref{betam} holds, in particular
    $\beta_{-,t}$  still remains well separated from the rest of the spectrum if $t\ll 1$.

    After all these preparations, we first handle  \eqref{eq:largeeta} in the case when either $A^*$ or $B^*$
    lies in the range of $\Pi$. The proof  relies on the following technical lemma (whose proof is postponed to the Appendix~\ref{app:techres}):
    \begin{lemma}
        \label{pro:imppro}
        Fix any small $\tau,\epsilon>0$, and fix $z_i,\eta_i$, with $i=1,2$, such that $|z_i|\le 1-\tau$,
        $|\eta_i|\ge n^{-1+\epsilon}$. Let $A,B$ be any deterministic matrices with $\norm{A}+\norm{B}\le1$
        and such that at least one among $\norm{[\mathcal{B}_{21}^{-1}]^*[A^*]}$, $\norm{[\mathcal{B}_{12}^{-1}]^*[B^*]}$
        is bounded by an ($n$,$\eta$)--independent constant, where
        $\mathcal{B}_{12}[\cdot]:=1-M_1\mathcal{S}[\cdot]M_2$, $M_i= M^{z_i}(\ii\eta_i)$. Then it holds
        \begin{equation}
            |\braket{(G_1AG_2-M_{12}^A)B}|\prec \frac{1}{n\eta_*\sqrt{|\eta_1\eta_2|}},
        \end{equation}
        uniformly in $\eta_*\ge n^{-1+\epsilon}$.
    \end{lemma}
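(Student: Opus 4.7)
The plan is to derive a ``master equation'' for the two-resolvent quantity $G_1AG_2 - M_{12}^A$ that exposes the stability operator $\mathcal{B}_{12}$, and then to neutralise the dangerous unstable mode of $\mathcal{B}_{12}^{-1}$ by a duality argument against the ``good'' test matrix. The driving identity is the single-resolvent master equation
\[
 G_i - M_i = -M_i\,\underline{WG_i} + M_i\,\mathcal{S}[G_i-M_i]\,G_i, \qquad i=1,2,
\]
with $\underline{WG_i} := WG_i-\mathcal{S}[G_i]G_i$ the normal-ordered product (understood via cumulant expansion in the non-Gaussian case). Multiplying this by $AG_2$ from the right for $i=1$, by $G_1A$ from the left for $i=2$, and moving the quadratic term $M_1\mathcal{S}[G_1AG_2]M_2$ to the left-hand side, I would obtain
\[
 \mathcal{B}_{12}[G_1AG_2 - M_{12}^A] = \mathcal{D}_{12}^A,
\]
where $\mathcal{D}_{12}^A = -M_1\underline{WG_1}AG_2 - M_1 A\underline{G_2W}M_2$ plus quadratic ``self-correction'' terms of the form $M_1\mathcal{S}[G_1-M_1]G_1AG_2$ and its counterpart on the right, all controlled by the single-resolvent local law~\eqref{eq:singlegllaw}.

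The decisive step is duality. Inverting $\mathcal{B}_{12}$ and pairing with $B$ gives
\[
 \braket{(G_1AG_2 - M_{12}^A)B} = \braket{\mathcal{D}_{12}^A \cdot \tilde B},\qquad \tilde B := (\mathcal{B}_{12}^t)^{-1}[B],
\]
where the transpose $\mathcal{B}_{12}^t$ is defined by the bilinear pairing $\braket{\mathcal{B}_{12}[X]Y} = \braket{X\mathcal{B}_{12}^t[Y]}$. A short calculation (using $\mathcal{S}^t = \mathcal{S}$) shows that $\|(\mathcal{B}_{12}^t)^{-1}[B]\| = \|[\mathcal{B}_{12}^{-1}]^*[B^*]\|$, so the hypothesis of the lemma forces $\|\tilde B\|\lesssim 1$. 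In this way the dangerous $\|\mathcal{B}_{12}^{-1}\|\sim|\beta_-|^{-1}$ blow-up is defused: the instability ``touches'' $\mathcal{B}_{12}^{-1}$ only along one specific direction, which by assumption has bounded image. The symmetric hypothesis on $[\mathcal{B}_{21}^{-1}]^*[A^*]$ is handled identically after cycling $\braket{G_1AG_2B} = \braket{G_2BG_1A}$ and replaying the same argument with $\mathcal{B}_{21}$ in place of $\mathcal{B}_{12}$.

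It then remains to bound $|\braket{\mathcal{D}_{12}^A\tilde B}|$. For the fluctuating ``underlined'' pieces such as $\braket{M_1\underline{WG_1}AG_2\tilde B}$, I would proceed as in the Schwarz/Ward manipulation of~\eqref{eq:reflat}--\eqref{eq:estquadvar}: a cumulant expansion in the entries of $W$ shows that the leading Gaussian contribution vanishes by the normal ordering, while the variance is controlled using the Ward identity $G_iG_i^* = \Im G_i/|\eta_i|$ by $\|\tilde B\|^2$ times a two-resolvent imaginary-part trace, producing a bound of the claimed order $1/(n\eta_*\sqrt{|\eta_1\eta_2|})$. The quadratic self-correction terms factorise as a scalar $\braket{G_i-M_i}\prec 1/(n|\eta_i|)$ multiplied by the very quantity $\braket{(G_1AG_2-M_{12}^A)\tilde B}$ that we are estimating, leading to a Gronwall-type self-improving bootstrap that absorbs them.

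The main obstacle is the cumulant expansion of the fluctuating term together with its high-moment bootstrap: because $G_1$ and $G_2$ come from two \emph{different} Hermitisations $H^{z_1}, H^{z_2}$, one cannot reduce products of resolvents to single resolvents via the standard Wigner-type resolvent identity, so the expansion must be closed honestly on the two-resolvent quantity. At every iteration one has to verify that the $O(1)$ control on $\tilde B$ is preserved and that no hidden projection onto the unstable direction of $\mathcal{B}_{12}$ sneaks back in through the iterated subleading terms; this is the delicate step where the lemma's hypothesis on $\|[\mathcal{B}_{12}^{-1}]^*[B^*]\|$ (or its $A$-counterpart) plays its full role.
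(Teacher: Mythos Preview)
Your approach is essentially the same as the paper's: derive the two-resolvent master equation, move the $M_1\mathcal{S}[\cdot]M_2$ term to the left to expose $\mathcal{B}_{12}$, and absorb $\mathcal{B}_{12}^{-1}$ into the good test matrix via duality, so that $\tilde B=((\mathcal{B}_{12}^{-1})^*[B^*])^*$ has bounded norm by hypothesis. Two minor simplifications compared with your sketch: (i) only one expansion (of $G_1$) is needed, producing a single renormalised term $M_1\underline{WG_1AG_2}$ plus the elementary $M_1A(G_2-M_2)$ correction, rather than two separate underlined pieces; (ii) no Gronwall/bootstrap is required for the quadratic self-correction terms---writing $\mathcal{S}[G_1-M_1]$ and $\mathcal{S}[G_1AG_2]$ as block-constant scalars and using the averaged single-$G$ local law together with the crude Schwarz/Ward bound $|\braket{G_1AG_2C}|\prec \|C\|/\sqrt{|\eta_1\eta_2|}$ already gives the target size directly. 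The fluctuation bound $|\braket{M_1\underline{WG_1AG_2}\tilde B}|\prec (n\eta_*\sqrt{|\eta_1\eta_2|})^{-1}$ that you identify as the ``main obstacle'' is in fact already proved in~\cite[Eq.~(5.10c)]{1912.04100} and can simply be quoted.
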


    Given a deterministic matrix $A$, we split $A^*$ as in \eqref{eq:goodspecdec}, then,
    by \eqref{eq:fundboundA}, it clearly follows that $\norm{(\mathcal{B}_{12}^{-1})^*[\Pi[A^*]]}$ is bounded.
    In particular, by \eqref{eq:goodspecdec} and  bilinearity of~\eqref{eq:largeeta}, we obtain
    that Lemma~\ref{pro:imppro} proves \eqref{eq:largeeta} in all cases except for when $A^*$ and $B^*$ are in the range of
    the corresponding $\Pi_-$.
    Thus the remainder of the proof focuses on the case
    $A=L_{21,-}$ and $B=L_{12,-}$, where we recall from~\eqref{LR} that $L_{12,-}$ is defined by $\mathcal{B}_{12}^*[L_{12,-}^*]=\overline{\beta_-}L_{12,-}^*$.
    From now on we introduce the shorthand notations
    $$ L_\pm:=L_{12,\pm},\qquad L_\pm':=L_{21,\pm}.
    $$
    For definiteness we only consider the case %
    when $\eta_{1,t}\eta_{2,t}<0$. In this case we have (see  \eqref{eq:usetaylexp}):
    \[
        L_-=(1+\mathcal{O}(|z_{1,t}-z_{2,t}|))I+\mathcal{O}(|z_{1,t}-z_{2,t}|)E_-,
    \]
    and an analogous relation for $L_-'$. The case $\eta_{1,t}\eta_{2,t}>0$ is completely analogous and so omitted.

    Define the stopping time
    \begin{equation}
        \label{eq:tau1}
        \tau_1:=\inf\left\{t\ge 0 : \left|\braket{(G_{1,t}(\ii\eta_{1,t})L_-'G_{2,t}(\ii\eta_{2,t})-M_{12,t}^{L_-'})L_-}\right|= \frac{n^{2\xi}}{n\eta_{*,t}\sqrt{|\eta_{1,t}\eta_{2,t}|}}\right\}\wedge T.
    \end{equation}
    Here $\xi\le (\epsilon\wedge \omega_d)/10$ with $\epsilon>0$ such that $\eta_{*,t}\ge n^{-1+\epsilon}$ and $\omega_d$ such that $|z_{1,t}-z_{2,t}|\lesssim n^{-\omega_d}$ for any $t\ge 0$. We remark that $L_-, L_-'$ in \eqref{eq:tau1} are independent of time.

    Define
    \[
        Y_t:=\braket{(G_{1,t}(\ii\eta_{1,t})L_-'G_{2,t}(\ii\eta_{2,t})-M_{12,t}^{L_-'})L_-}.
    \]
    In order to study the time evolution of $Y_t$, we need to understand how $M_{12,t}^{L_-'}$ evolves in time. This is explained in the following lemma, whose proof is postponed to Appendix~\ref{app:techres} (see \eqref{eq:12derm}).
    \begin{lemma}
        \label{lem:mder}
        For any $A,B\in \mathbf{C}^{2n\times 2n}$  It holds that
        \begin{equation}
            \label{eq:domdera}
            \partial_t \braket{M_{12,t}^AB} = \braket{M_{12,t}^AB}+\braket{\SS[M_{12,t}^A]M_{21,t}^B}.
        \end{equation}
    \end{lemma}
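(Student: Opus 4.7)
The plan is to differentiate the defining equation $\mathcal{B}_{12,t}[M_{12,t}^A] = M_{1,t}AM_{2,t}$ along the characteristic flow, and then rewrite the resulting trace identity using a duality computation that trades $\mathcal{B}_{12,t}^{-1}$ on one side of the trace for an operator built from $\mathcal{B}_{21,t}^{-1}$ on the other side.

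First I would establish the deceptively simple identity $\partial_t M_{i,t} = M_{i,t}/2$. From~\eqref{eq:timeev} we have $m_{i,t}=e^{t/2}m_{i,0}$, $z_{i,t}=e^{-t/2}z_{i,0}$, $u_{i,t}=e^t u_{i,0}$, so $\partial_t m_{i,t}=m_{i,t}/2$ and $\partial_t(z_{i,t}u_{i,t}) = z_{i,t}u_{i,t}/2$; every entry of $M_{i,t}$ thus scales uniformly at rate $1/2$. Differentiating the identity $\mathcal{B}_{12,t}[M_{12,t}^A] = M_{1,t}AM_{2,t}$ and using this, the derivative of $\mathcal{B}_{12,t}$ applied to any matrix $R$ equals $-M_{1,t}\SS[R]M_{2,t} = \mathcal{B}_{12,t}[R]-R$, while $\partial_t(M_{1,t}AM_{2,t}) = M_{1,t}AM_{2,t}$. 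Substituting $R=M_{12,t}^A$ and simplifying, the $M_1AM_2$ terms cancel and one obtains the clean identity
\begin{equation*}
\partial_t M_{12,t}^A \;=\; \mathcal{B}_{12,t}^{-1}\bigl[M_{12,t}^A\bigr].
\end{equation*}

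Next I would handle the duality step. Let $\mathcal{B}_{12,t}^{\mathsf T}$ denote the transpose of $\mathcal{B}_{12,t}$ with respect to the bilinear pairing $(X,R)\mapsto \braket{XR}$, so that $\braket{X\mathcal{B}_{12,t}[R]} = \braket{\mathcal{B}_{12,t}^{\mathsf T}[X]R}$. Since $\SS$ is self-adjoint for this pairing (inspection of~\eqref{defS} shows $\braket{Q\SS[R]}=\braket{\SS[Q]R}$), cyclicity of the trace gives $\mathcal{B}_{12,t}^{\mathsf T}[X] = X-\SS[M_{2,t}XM_{1,t}]$. The key algebraic fact I would then verify is
\begin{equation*}
(\mathcal{B}_{12,t}^{\mathsf T})^{-1}[B] \;=\; B + \SS\bigl[M_{21,t}^B\bigr].
\end{equation*}
This is straightforward: apply $\mathcal{B}_{12,t}^{\mathsf T}$ to the right-hand side to get $B + \SS[M_{21,t}^B - M_{2,t}BM_{1,t} - M_{2,t}\SS[M_{21,t}^B]M_{1,t}] = B + \SS[\mathcal{B}_{21,t}[M_{21,t}^B] - M_{2,t}BM_{1,t}] = B$, using $\mathcal{B}_{21,t}[M_{21,t}^B] = M_{2,t}BM_{1,t}$.

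Combining the two steps,
\begin{equation*}
\partial_t \braket{M_{12,t}^A B} \;=\; \braket{\mathcal{B}_{12,t}^{-1}[M_{12,t}^A]\,B} \;=\; \braket{M_{12,t}^A\,(\mathcal{B}_{12,t}^{\mathsf T})^{-1}[B]} \;=\; \braket{M_{12,t}^A B} + \braket{M_{12,t}^A \SS[M_{21,t}^B]},
\end{equation*}
and one last application of the self-adjointness of $\SS$ rewrites the final term as $\braket{\SS[M_{12,t}^A]M_{21,t}^B}$, yielding~\eqref{eq:domdera}. The only nontrivial point is the duality identity $(\mathcal{B}_{12,t}^{\mathsf T})^{-1}[B] = B + \SS[M_{21,t}^B]$; everything else is a routine calculation riding on the observation that $M_{i,t}$ evolves by a trivial rescaling along the characteristics.
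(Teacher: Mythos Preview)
Your proof is correct and follows essentially the same route as the paper's: both establish $\dot M_{i,t}=M_{i,t}/2$, deduce $\partial_t M_{12,t}^A=\cB_{12,t}^{-1}[M_{12,t}^A]$ by differentiating the defining relation, and then pass the inverse stability operator across the trace via the transpose identity $(1-\SS[M_2\,\cdot\,M_1])^{-1}[B]=B+\SS[M_{21,t}^B]$. The only cosmetic differences are that the paper derives $\dot M_i=M_i/2$ abstractly from the matrix Dyson equation rather than from the explicit scalar formulas, and it writes the transpose inverse as $M_2^{-1}M_{21}^B M_1^{-1}$ before unpacking it into $B+\SS[M_{21}^B]$.
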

    Then, choosing $A=L_-'$, $B=L_-$ in \eqref{eq:fulleqanewaa} and \eqref{eq:domdera},  we obtain
    \begin{equation}
        \label{eq:intermstep}
        \begin{split}
            Y_T&=Y_0+\int_0^T Y_t\,\dif t+\int_0^T\big[\braket{\mathcal{S}[G_{1,t}L_-'G_{2,t}]G_{2,t}L_-G_{1,t}}-\braket{\mathcal{S}[M_{12}^{L_-'}]M_{21}^{L_-}}\big]\,\dif t\\
            &\quad+\mathcal{O}\left(n^\xi \left(\int_0^T \frac{|\braket{\Im G_{1,t}L_-'\Im G_{2,t}(L_-')^*}|}{n^2\eta_{*,t}^2|\eta_{1,t}\eta_{2,t}|}\, \dif t\right)^{1/2}+n^\xi \int_0^T \, \frac{|\braket{\Im G_{1,t}L_-'\Im G_{2,t}(L_-')^*}|^{1/2}}{n\eta_{*,t}^{3/2} \sqrt{|\eta_{1,t}\eta_{2,t}|}}\dif t\right).
        \end{split}
    \end{equation}

    Next, to estimate the last term in the first line of \eqref{eq:intermstep} we rely on the following lemma, whose proof is postponed to Appendix~\ref{app:techres}.
    \begin{lemma}
        \label{lem:newlem1}
        Denote $M_{12}^{L_-'}=M_{12,t}^{L_-'}$, $G_i=G_{i,t}(\ii\eta_{i,t})$, then it holds
        \begin{equation}
            \label{eq:addinfneed}
            \braket{\mathcal{S}[G_1L_-'G_2]G_2L_-G_1}=\braket{\mathcal{S}[M_{12}^{L_-'}]M_{21}^{L_-}}+2\braket{M_{12}^I}Y_t+\mathcal{O}\left((1+|Y_t|)|Y_t|+\frac{n^\xi}{n\eta_{*,t}^2\sqrt{|\eta_{1,t}\eta_{2,t}|}}\right).
        \end{equation}
    \end{lemma}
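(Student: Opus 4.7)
The plan is to expand the left-hand side of \eqref{eq:addinfneed} by subtracting the deterministic approximations of each two-resolvent product and then tracking which pieces of the resulting deviations are controlled by Lemma~\ref{pro:imppro} and which must be retained as $Y_t$-contributions. Set $P := G_1L_-'G_2-M_{12}^{L_-'}$ and $Q := G_2L_-G_1-M_{21}^{L_-}$. Using the identity $\braket{\SS[R]S} = \braket{R}\braket{S} - \braket{RE_-}\braket{SE_-}$, which follows from $\SS[R] = \braket{R}I - \braket{RE_-}E_-$ in \eqref{defS}, the left-hand side becomes
\[
\braket{\SS[M_{12}^{L_-'}]M_{21}^{L_-}} + \braket{\SS[P]M_{21}^{L_-}} + \braket{\SS[M_{12}^{L_-'}]Q} + \braket{\SS[P]Q},
\]
so I need to show that the first cross term plus the second cross term equals $2\braket{M_{12}^I}Y_t$ plus the stated error, while the last bilinear term is bounded by $(1+|Y_t|)|Y_t|$.

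The central idea is to decompose the test matrices $I$ and $E_-$ along the splitting of the block-constant subspace induced by the spectral projections $\Pi$ and $\Pi_-$ from \eqref{eq:goodspecdec}. Explicitly, $I = c_I L_- + \Pi[I]$ and $E_- = c_{E_-} L_- + \Pi[E_-]$ with bounded coefficients and bounded remainders, and analogously on the $21$-side with $L_-'$. Pairing with $P$ gives
\[
\braket{P\cdot I} = c_I Y_t + \braket{P\,\Pi[I]}, \qquad \braket{P E_-} = c_{E_-} Y_t + \braket{P\,\Pi[E_-]},
\]
and similarly for $Q$. The $\Pi$-pieces satisfy the hypothesis of Lemma~\ref{pro:imppro} and are therefore $\mathcal{O}_\prec((n\eta_*\sqrt{|\eta_1\eta_2|})^{-1})$. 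The deterministic prefactors in the two cross terms, $\braket{M_{21}^{L_-}}$ and $\braket{M_{12}^{L_-'}E_-}$ (and their analogues), are of size $|\beta_-|^{-1}$, because $M_{12}^{L_-'} = \mathcal{B}_{12}^{-1}[M_1L_-'M_2]$ inherits the small denominator from the $R_{12,-}$-component of its argument. Multiplying $|\beta_-|^{-1}\lesssim \eta_*^{-1}$ by a $\Pi$-controlled error of size $(n\eta_*\sqrt{|\eta_1\eta_2|})^{-1}$ produces precisely the claimed $n^\xi/(n\eta_*^2\sqrt{|\eta_1\eta_2|})$ error.

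The main obstacle is the precise bookkeeping that collapses the four $Y_t$-coefficients (one for each choice of $I$ or $E_-$ in each cross term) into the explicit scalar $2\braket{M_{12}^I}$. This requires combining the expansion $L_- = (1+\mathcal{O}(|z_1-z_2|))I + \mathcal{O}(|z_1-z_2|)E_-$ recorded above \eqref{eq:tau1}, the analogous expansion of $L_-'$, the eigen-relations $\mathcal{B}_{12}^*[L_-^*] = \bar\beta_- L_-^*$ and $\mathcal{B}_{21}^*[(L_-')^*] = \bar\beta_-(L_-')^*$ from \eqref{LR}, and the explicit formulas for $\beta_\pm$ and the eigenvectors from Appendix~\ref{app:evect}; the identification $M_{12}^I = \mathcal{B}_{12}^{-1}[M_1M_2]$ emerges from the matching after these substitutions. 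Finally, for the bilinear remainder $\braket{\SS[P]Q}$, applying the same $L_-$-vs-$\Pi$ decomposition to both $P$ and $Q$ and then to $E_-$ shows that each of $\braket{P\cdot}$, $\braket{Q\cdot}$ is bounded by $|Y_t| + \mathcal{O}_\prec((n\eta_*\sqrt{|\eta_1\eta_2|})^{-1})$. Under the stopping time $\tau_1$ from \eqref{eq:tau1} the Lemma~\ref{pro:imppro} error is absorbed into a constant multiple of $|Y_t|$, so the product is bounded by $(1+|Y_t|)|Y_t|$, yielding the claim.
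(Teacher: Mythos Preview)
Your approach is essentially the same as the paper's: expand via the identity $\braket{\SS[R]S}=\braket{R}\braket{S}-\braket{RE_-}\braket{SE_-}$, decompose the test matrices $I,E_-$ along the eigendirections of $\mathcal{B}_{12}^*$, apply Lemma~\ref{pro:imppro} to the stable ($L_+$/$\Pi$) pieces, and collect the $L_-$-contributions as $Y_t$-terms with coefficient $\approx\braket{M_{12}^I}$ each. The paper carries this out with the explicit decomposition $I=(1+\mathcal{O}(|z_1-z_2|))L_-+\mathcal{O}(|z_1-z_2|)L_+$, $E_-=(1+\mathcal{O}(|z_1-z_2|))L_++\mathcal{O}(|z_1-z_2|)L_-$ from~\eqref{eq:usetaylexp} rather than the abstract projection $\Pi$, which makes the bookkeeping for the coefficient $2\braket{M_{12}^I}$ immediate via $\braket{M_{21}^{L_-}}=\braket{M_{12}^I}(1+\mathcal{O}(|z_1-z_2|))$.

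One small correction to your last step: under $\tau_1$ the Lemma~\ref{pro:imppro} error $\epsilon\prec (n\eta_*\sqrt{|\eta_1\eta_2|})^{-1}$ is \emph{not} bounded by a constant multiple of $|Y_t|$ (which may be much smaller). The right way to handle $\braket{\SS[P]Q}$ is to note that $(|Y_t|+\epsilon)^2=|Y_t|^2+2|Y_t|\epsilon+\epsilon^2$ and absorb the mixed and pure-$\epsilon$ pieces into the \emph{other} error term: since $n\sqrt{|\eta_1\eta_2|}\ge n^\epsilon$ one has $\epsilon^2,\,|Y_t|\epsilon\lesssim n^\xi/(n\eta_*^2\sqrt{|\eta_1\eta_2|})$ (using the stopping-time bound on $|Y_t|$ for the latter). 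With this adjustment your argument goes through.
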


    Combining \eqref{eq:intermstep} with \eqref{eq:addinfneed}, we conclude
    \begin{equation}
        \label{eq:fulleqanewaaab}
        \begin{split}
            Y_t &=Y_0+2\int_0^t\braket{M_{12,s}^I}Y_s\,\dif s+\mathcal{O}\left(\int_0^t\big(1+|Y_s|\big)|Y_s|\,\dif s+\frac{n^\xi}{n\eta_{*,t}\sqrt{|\eta_{1,t}\eta_{2,t}|}}\right),
        \end{split}
    \end{equation}
    where we used that  $|\braket{\Im G_{1,t}L_-'\Im G_{2,t}(L_-')^*}|\le \eta_{*,t}^{-1}$ with very high probability, by Schwarz inequality and the single resolvent local law \eqref{eq:singlegllaw}, to estimate the error terms in the second line of \eqref{eq:intermstep}. Finally, using that, by the definition of the stopping time $\tau_1$, we have $|Y_s|\le n^{2\xi}(n\eta_{*,s}\sqrt{|\eta_{1,s}\eta_{2,s}|})^{-1}$, for $0\le s\le \tau_1$, to estimate $(1+|Y_s|)|Y_s|$ (here we also used that $\xi\le \epsilon/10$), we conclude
    \begin{equation}
        \label{eq:fulleqanewaaa}
        \begin{split}
            Y_t &=Y_0+2\int_0^t\braket{M_{12,s}^I}Y_s\,\dif s+\mathcal{O}\left(\frac{n^\xi}{n\eta_{*,t}\sqrt{|\eta_{1,t}\eta_{2,t}|}}\right).
        \end{split}
    \end{equation}

    We now precisely estimate the deterministic term $\braket{M_{12,s}^I}$ in \eqref{eq:fulleqanewaaa} (the proof is postponed to Appendix~\ref{app:techres}):
    \begin{lemma}
        \label{lem:m12bound}
        Let $m_{i,t}:=m^{z_{i,t}}(\ii\eta_{i,t})$, $u_{i,t}:=u^{z_{i,t}}(\ii\eta_{i,t})$, then we have\begin{equation}
            \label{eq:explM12}
            \begin{split}
                \big|\braket{M_{12,t}^I}\big|\le \frac{a_0}{b_0-a_0t}\big(1+\mathcal{O}\left(|\eta_{1,0}|+|\eta_{2,0}|+|z_{1,0}-z_{2,0}|^2+t\right)\big),
            \end{split}
        \end{equation}
        where
        \[
            a_0:=2-|z_{1,0}|^2-|z_{2,0}|^2, \qquad\quad b_0:=|z_{1,0}-z_{2,0}|^2+|\eta_{1,0}|\sqrt{1-|z_{1,0}|^2}+|\eta_{2,0}|\sqrt{1-|z_{2,0}|^2}.
        \]
    \end{lemma}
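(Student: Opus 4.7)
The plan is to reduce the computation of $\braket{M_{12,t}^I}$ to a finite-dimensional linear-algebra problem on the space of block-constant matrices, extract the explicit $t$-dependence from the time evolution \eqref{eq:timeev}, and Taylor expand around $t=0$ to read off the leading behavior $a_0/(b_0-a_0t)$.

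First, since $M_{1,t}M_{2,t}$ is block constant and $\mathcal{S}$ depends only on the block-diagonal part, inversion of $\mathcal{B}_{12,t}=1-M_{1,t}\mathcal{S}[\cdot]M_{2,t}$ reduces to a $2\times 2$ linear system on the subspace spanned by $E_1,E_2$ (the $F,F^*$ components contribute nothing to the normalized trace). Writing the system in terms of the scalars $\alpha_t:=m_{1,t}m_{2,t}$ and $\beta_t:=z_{1,t}\overline{z_{2,t}}u_{1,t}u_{2,t}$, a direct computation using the explicit form of $M_{i,t}$ yields the closed form
\[
\braket{M_{12,t}^I}=\frac{N(t)}{D(t)},\quad N(t):=\alpha_t+\Re\beta_t+\alpha_t^2-\abs{\beta_t}^2,\quad D(t):=1-2\Re\beta_t+\abs{\beta_t}^2-\alpha_t^2.
\]

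Next, using \eqref{eq:timeev} I would substitute $\alpha_t^2=e^{2t}\alpha_0^2$, $\abs{\beta_t}^2=e^{2t}\abs{\beta_0}^2$, and $\Re\beta_t=e^t\Re\beta_0$, obtaining fully explicit formulas for $N(t)$ and $D(t)$ in terms of the initial parameters and $t$. Taylor expanding at $t=0$, employing the small-$\eta$ asymptotics \eqref{eq:expsmalleta} for $m_{i,0},u_{i,0}$, and simplifying via the identity $\abs{z_{1,0}-z_{2,0}}^2=\abs{z_{1,0}}^2+\abs{z_{2,0}}^2-2\Re(z_{1,0}\overline{z_{2,0}})$, all the leading $O(1)$ contributions cancel and one is left with
\[
D(0)=b_0\bigl(1+O(\epsilon_*)\bigr),\quad -\partial_tD(0)=a_0\bigl(1+O(\epsilon_*)\bigr),\quad \abs{N(t)}\le a_0\bigl(1+O(\epsilon_*+t)\bigr),
\]
where $\epsilon_*:=\abs{\eta_{1,0}}+\abs{\eta_{2,0}}+\abs{z_{1,0}-z_{2,0}}^2$. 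The bulk assumption $\abs{z_{i,0}}\le 1-\tau$ guarantees $a_0\gtrsim\tau$ and keeps $\sqrt{1-\abs{z_{i,0}}^2}$ bounded away from zero, so all the small ratios appearing in the expansion are uniformly controlled by $\epsilon_*$. For $N(t)$ one actually finds $N(0)=a_0-\abs{z_{1,0}-z_{2,0}}^2/2+O(\epsilon_*)$ when $\eta_{1,0}\eta_{2,0}<0$ and an even smaller quantity when the signs agree, so the upper bound holds in both sign configurations.

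Combining these estimates with $D(t)=D(0)+t\,\partial_tD(0)+O(t^2)$, and observing that the $O(t^2)$ remainder is subleading relative to $a_0t$ since $t\le n^{-\omega_1}\ll 1$, the ratio $N(t)/D(t)$ becomes $a_0/(b_0-a_0t)$ up to the claimed multiplicative error. The main obstacle is the bookkeeping: the corrections to $D(0)$ and $\partial_tD(0)$ must be organized as \emph{multiplicative} perturbations of $b_0$ and $a_0$, respectively, because any purely additive error bound would spoil the estimate precisely when $a_0t$ approaches $b_0$, that is, in the most delicate regime near the real axis where the small eigenvalue $\beta_{-,t}$ of $\mathcal{B}_{12,t}$ vanishes.
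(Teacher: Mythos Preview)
Your proposal is correct and follows essentially the same route as the paper: both compute the explicit rational expression for $\braket{M_{12,t}^I}$ on the four-dimensional block-constant subspace, insert the time evolution~\eqref{eq:timeev}, and Taylor expand using~\eqref{eq:expsmalleta} to extract the leading form $a_0/(b_0-a_0t)$. The only organizational difference is that the paper first Taylor expands in $\eta_{i,t}$ at time $t$ (obtaining the denominator $|z_{1,t}-z_{2,t}|^2+\sum_i|\eta_{i,t}|\sqrt{1-|z_{i,t}|^2}$ with multiplicative error $1+\mathcal O(\Xi_t)$) and only then converts to time-$0$ data via the characteristics, whereas you first pull back to $t=0$ via $\alpha_t=e^t\alpha_0$, $\beta_t=e^t\beta_0$ and then Taylor expand in $t$; the paper's order makes the multiplicative structure of the error slightly more transparent (the small parameter is $\Xi_t$ rather than $\Xi_0+t$), which is exactly the bookkeeping concern you flagged in your last paragraph.
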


    In order to obtain an estimate for $Y_t$ from \eqref{eq:fulleqanewaaa} we will use a Gronwall inequality. For this purpose we compute
    \begin{equation}
        \label{eq:intgronwall}
        \begin{split}
            \exp\left(2\int_s^t \big|\braket{M_{12,r}^I}\big|\,\dif r\right)\le \exp\left(2\int_s^t \left|\frac{a_0}{b_0-a_0r}\right|\,\dif r\right)=\left(\left|\frac{b_0-a_0s}{b_0-a_0t}\right|\right)^2&\sim\left(\frac{|z_{1,s}-z_{2,s}|^2+|\eta_{1,s}|+|\eta_{2,s}|}{|z_{1,t}-z_{2,t}|^2+|\eta_{1,t}|+|\eta_{2,t}|}\right)^2.
        \end{split}
    \end{equation}
    Here in the last step we used various elementary properties of $z_{i,t}$ and $\eta_{i,t}$ that follow from \eqref{eq:timeev}. In particular, we used that $\eta_{i,t}=\eta_{i,0}- \mathrm{sgn}(\eta_{i,0})c_it$, with $c_i$ given in \eqref{eq:defci}, and that $1- |z_{i,t}| \sim 1- |z_{i,t}| \sim 1- |z_{i,0}|$, as well as $|z_{1,t}-z_{2,t}|\sim |z_{1,s}-z_{2,s}|\sim |z_{1,0}-z_{2,0}|$, for the times we consider $0\le s\le t\le T\ll 1$. Combining these information we obtain
    \[
        b_0-a_0t=|z_{1,0}-z_{2,0}|^2+|\eta_{1,0}|\sqrt{1-|z_{1,0}|^2}+|\eta_{2,0}|\sqrt{1-|z_{2,0}|^2}-t[2-|z_{1,0}|^2-|z_{2,0}|^2]\sim |z_{1,t}-z_{2,t}|^2+|\eta_{1,t}|+|\eta_{2,t}|,
    \]
    which was used in the last step of \eqref{eq:intgronwall}. Note that in our regime $|z_{1,t}-z_{2,t}|\lesssim |\eta_{1,t}|+|\eta_{2,t}|$ for any $0\le t\le T$, hence the $|z_{1,t}-z_{2,t}|$ terms are negligible in \eqref{eq:intgronwall}. Using again the elementary properties of $z_{i,t}$ and $\eta_{i,t}$ described above, we note that
    \begin{equation}
        \label{eq:step121}
        \frac{|\eta_{1,s}|+|\eta_{2,s}|}{|\eta_{1,t}|+|\eta_{2,t}|}\lesssim 1+(t-s)\frac{(\sqrt{1-|z_{1,0}|^2}+\sqrt{1-|z_{2,0}|^2})}{|\eta_{1,t}|+|\eta_{2,t}|}\lesssim 1+(t-s)\frac{(\sqrt{1-|z_{1,0}|^2}+\sqrt{1-|z_{2,0}|^2})}{\eta_{*,t}}\lesssim \frac{\eta_{*,s}}{\eta_{*,t}},
    \end{equation}
    by $1- |z_{i,t}| \sim 1- |z_{i,t}| \sim 1- |z_{i,0}|$ and $|\eta_{i,s}|=|\eta_{i,t}|+(c_i+o(1))(t-s)$, with $c_i$ defined below \eqref{eq:timeev}. In particular, this also implies that
    \begin{equation}
        \label{eq:step122}
        \frac{|\eta_{1,s}|+|\eta_{2,s}|}{|\eta_{1,t}|+|\eta_{2,t}|}\lesssim \frac{\sqrt{|\eta_{1,s}\eta_{2,s}|}}{\sqrt{|\eta_{1,t}\eta_{2,t}|}}.
    \end{equation}
    Combining \eqref{eq:intgronwall}--\eqref{eq:step122}, we conclude
    \begin{equation}
        \label{eq:fin12}
        \exp\left(2\int_s^t \big|\braket{M_{12,r}^I}\big|\,\dif r\right)\lesssim \frac{\eta_{*,s}\sqrt{|\eta_{1,s}\eta_{2,s}|}}{\eta_{*,t}\sqrt{|\eta_{1,t}\eta_{2,t}|}}.
    \end{equation}

    Finally, using \eqref{eq:fin12}, by a simple Gronwall inequality, together with \eqref{eq:incond} to bound $Y_0$, we conclude
    \begin{equation}
        |Y_t|\lesssim \frac{n ^\xi}{n\eta_{*,t}\sqrt{|\eta_{1,t}\eta_{2,t}|}},
    \end{equation}
    with very high probability for any $t< \tau_1$. This proves that $\tau_1=T$ and concludes the proof of \eqref{eq:largeeta}.

    \subsection*{Proof of Part 2.}

    Since $|\eta_{i,0}|\lesssim |z_{1,0}-z_{2,0}|^2$ by assumption, and
    \[
        \begin{split}
            \eta_{i,t}&=\eta_{i,0}-(1-|z_{i,0}|^2)^{1/2}t+\mathcal{O}(t|\eta_{i,0}|), \\
            |z_{1,0}-z_{2,0}|^2&=e^t|z_{1,t}-z_{2,t}|^2=|z_{1,t}-z_{2,t}|^2(1+\mathcal{O}(t)),
        \end{split}
    \]
    we also have that $\eta_{i,t}^*\lesssim |z_{1,t}-z_{2,t}|^2$ for any $0\le t\le T$ in our perturbative regime $T\lesssim |z_{1,0}-z_{2,0}|^2 \le n^{-2\omega_d}$.

    Unlike in Part 1 (see \eqref{eq:goodspecdec}), we do not need to separate the spectral projection
    corresponding to the smallest eigenvalue $\beta_-$, this is because  the norm estimate~\eqref{Pi} combined with
    ~\eqref{betam} giving
    \begin{equation}\label{Bb}
        \norm{ (\mathcal{B}_{12}^*)^{-1}}\lesssim |\beta_-|^{-1}\lesssim |z_1-z_2|^{-2}
    \end{equation}
    is affordable in the regime
    of Part 2 when $|z_1-z_2|$ is relatively large.  Therefore we  can use a standard
    orthogonal decomposition in the space of block constant matrices
    and we use the orthonormal basis $\{I,E_-,\sqrt{2}F,\sqrt{2}F^*\}$
    in which the covariance operator $\mathcal{S}$ is particularly simple.
    We thus decompose any deterministic matrices $A$ as
    \begin{equation}
        \label{eq:decA}
        A=:\braket{A}I+\braket{AE_-}E_-+2\braket{AF^*}F+2\braket{AF}F^*+\mathring{A},
    \end{equation}
    with $\mathring{A}$ being defined by this formula. In particular,  $\mathring{A}$
    is just the orthogonal projection (with respect to the usual Hilbert-Schmidt scalar product)
    of $A$ onto the space of block traceless matrices, i.e. matrices whose all four blocks are traceless.
    Let $A,B\in \{E_-,I,F,F^*\}$, then we define
    \begin{equation}
        \begin{split}
            Y_t:&=\sup_{A,B\in \{E_-,I,F,F^*\}}|\braket{\big(G_{1,t}(\ii\eta_{1,t})AG_{2,t}(\ii\eta_{2,t})B-M_{12,t}^A(\ii\eta_{1,t},\ii\eta_{2,t})\big)B}|\\
            &\quad+\sup_{A,B\in \{E_-,I,F,F^*\}}[|\braket{\big(G_{1,t}(\ii\eta_{1,t})AG_{2,t}(-\ii\eta_{2,t})B-M_{12,t}^A(\ii\eta_{1,t},-\ii\eta_{2,t})\big)B}|.
        \end{split}
    \end{equation}

    Choose $\xi\le \epsilon/10$ and define the stopping time
    \[
        \tau_2:=\inf\left\{t\ge 0\, :\, Y_t= \frac{n^{2\xi}}{n\sqrt{\eta_{*,t}|\eta_{1,t}\eta_{2,t}|(|z_{1,t}-z_{2,t}|^2+\eta_t^*)}}+\frac{1}{\sqrt{n\eta_{*,t}}}\cdot\frac{n^{3\xi}}{n\eta_{*,t}\sqrt{|\eta_{1,t}\eta_{2,t}|}}+n^{2\xi}\mathcal{E}(n,\eta_{1,0},\eta_{2,0})\right\}\wedge T.
    \]
    Note that by the definition of $\tau_2$ and the assumption $|\mathcal{E}(n,\eta_{1,0},\eta_{2,0})|\lesssim (n\eta_{*,0}^2)^{-1}\lesssim (n\eta_{*,t}^2)^{-1}$  it follows that $Y_t\le n^{2\xi} (n\eta_{*,t}^2)^{-1}$ for any $t<\tau_2$. Here we also used that $\xi\le \epsilon/10$ (recall that $\eta_{*,t}\ge n^{-1+\epsilon}$ for any $t\ge 0$) so that $n^\xi/\sqrt{n\eta_{*,t}}\le 1$

    We now proceed similarly to \eqref{eq:addinfneed}--\eqref{eq:fulleqanewaaa}, we thus do not write all the details but only explain the main differences.
    By adding and subtracting the deterministic approximation of all the terms in  \eqref{eq:fulleqanewaa}, by using Lemma~\ref{lem:mder} to show that all the deterministic terms exactly cancel, and that $\norm{M_{12,t}^A}\lesssim 1/|z_{1,t}-z_{2,t}|^2$, for any $t\le \tau_2$, we obtain
    \begin{equation}
        \begin{split}
            Y_t&\le Y_0+C\int_0^t \left(\frac{1}{|z_{1,s}-z_{2,s}|^2}+Y_s\right)Y_s\,\dif s+n^\xi \left(\int_0^t \frac{Y_s}{n^2\eta_{*,s}^2|\eta_{1,s}\eta_{2,s}|}\,\dif s\right)^{1/2}+n^\xi\int_0^t\frac{Y_s^{1/2}}{n\eta_{*,s}^{3/2}\sqrt{|\eta_{1,s}\eta_{2,s}|}}\, \dif s \\
            &+\frac{n^\xi}{n\sqrt{\eta_{*,t}|\eta_{1,t}\eta_{2,t}|(|z_{1,t}-z_{2,t}|^2+\eta_t^*)}} \\
            &\le  Y_0+C\int_0^t \left(\frac{1}{|z_{1,s}-z_{2,s}|^2}+Y_s+ \frac{n^\xi}{\sqrt{n}\eta_{*,s}^{3/2}}\right)Y_s\,\dif s+\frac{n^\xi}{N\sqrt{\eta_{*,t}|\eta_{1,t}\eta_{2,t}|(|z_{1,t}-z_{2,t}|^2+\eta_t^*)}}\\
            &\quad+\frac{1}{\sqrt{n\eta_{*,t}}}\cdot\frac{n^{2\xi}}{n\eta_{*,t}\sqrt{|\eta_{1,t}\eta_{2,t}|}},
        \end{split}
    \end{equation}
    for some constant $C>0$. Note that in the last inequality we first used Schwarz inequality (here we also use the monotonicity of the characteristics $|\eta_{i,t}|\le |\eta_{i,s}|$ for $s\le t$)
    \[
        \begin{split}
            \frac{Y_s^{1/2}}{n\eta_{*,s}^{3/2}\sqrt{|\eta_{1,s}\eta_{2,s}|}}&\le  \frac{Y_s}{\sqrt{n}\eta_{*,s}^{3/2}}+\frac{1}{\sqrt{n\eta_{*,s}}}\cdot\frac{1}{N\eta_{*,s}^2\sqrt{|\eta_{1,s}\eta_{2,s}|}} \\
            \left(\int_0^t \frac{Y_s}{n^2\eta_{*,s}^2|\eta_{1,s}\eta_{2,s}|}\,\dif s\right)^{1/2}&\le \frac{1}{n^{3/4}\eta_{*,t}^{3/4}(\eta_{1,s}\eta_{2,s})^{1/4}} \left(\int_0^t \frac{Y_s}{\sqrt{n}\eta_{*,s}^{3/2}}\,\dif s\right)^{1/2} \\
            &\le \frac{1}{\sqrt{n\eta_{*,t}}}\cdot\frac{1}{n\eta_{*,t}\sqrt{|\eta_{1,t}\eta_{2,t}|}}+\int_0^t \frac{Y_s}{\sqrt{n}\eta_{*,s}^{3/2}}\,\dif s,
        \end{split}
    \]
    and then we used that $Y_s\le n^{2\xi}(n\eta_{*,s}^2)^{-1}$ for $s\le \tau_2$. Finally, by a simple Gronwall inequality, using that $T\lesssim |z_{1,0}-z_{2,0}|^2$ and the bound $Y_0\le n^\xi \mathcal{E}(n,\eta_{1,0},\eta_{2,0})$ from \eqref{eq:incondsmalleta} ,we conclude that
    \[
        Y_t\lesssim \frac{n^\xi}{n\sqrt{\eta_{*,t}|\eta_{1,t}\eta_{2,t}|(|z_{1,t}-z_{2,t}|^2+\eta_t^*)}}+\frac{1}{\sqrt{n\eta_{*,t}}}\cdot\frac{n^{2\xi}}{n\eta_{*,t}\sqrt{|\eta_{1,t}\eta_{2,t}|}}+n^\xi \mathcal{E}(n,\eta_{1,0},\eta_{2,0}),
    \]
    for any $t\le\tau_2$. This shows that $\tau_2=T$ and thus that \eqref{eq:smalleta} holds, completing the proof of Proposition~\ref{pro:charaveinfin}.

\end{proof}

\section{GFT: Proof of Theorem~\ref{theo:impll}}
\label{sec:GFT}

The goal of this section is to show that we can remove the Gaussian components added in Section~\ref{sec:normllaw}. Consider the Ornstein-Uhlenbeck flow
\begin{equation}
    \label{eq:OU2}
    \dif X_t=-\frac{1}{2}X_t \dif t+\frac{\dif B_t}{\sqrt{N}}.
\end{equation}
and define
\begin{equation}
    R_t:=\braket{G_{1}^tAG_2^tB-M_{12}^{A}B},
\end{equation}
with
\begin{equation}
    G_i^t := ( W_t - Z_i - \ii\eta_i )^{-1}, \qquad Z_i:=\begin{pmatrix}
        0 & z_i \\\ov{z_i}&0
    \end{pmatrix},\qquad W_t := \begin{pmatrix}
        0     & X_t \\
        X_t^* & 0
    \end{pmatrix},
\end{equation}
where  $t$ in \(G_i^t\) denotes time dependence and it
should not be confused with the transpose \(G_i^\intercal\). Here we recall that $A,B$ are generic deterministic square matrices of bounded norm. Also we mention that this
\(G_i^t\) is not the same as $G_{i,t} = (W_t - \Lambda_{i,t})^{-1}$
used in Section~\ref{sec:normllaw} since now both
spectral parameters $z$ and $\eta$ are time independent and only $W_t$ changes
with time.

Note that along the OU flow the first two moments of $X_t$ are preserved hence $M_{12}^{A_1}$ is independent of time. Our main technical result of this section is the following~\cref{gft prop}.
\begin{proposition}\label{gft prop}
    Let $A,B$ be arbitrary deterministic matrices with $\norm{A}+\norm{B}\le 1$, and let $z,\eta_1,\eta_2$ be spectral parameters with $\abs{z}\le 1-\tau$ and $\eta_\ast:=\abs{\eta_1}\wedge \abs{\eta_2}\ge n^{-1+\epsilon}$ for some fixed $\epsilon,\tau>0$. Then for any $\xi>0$ and any even \(p\ge 4\) it holds that
    \begin{equation}
        \label{eq:GFTboundgood}
        \abs{\dif \E \abs{R_t}^p} \lesssim  \sum_{k=4}^p \frac{n^{2-k/2+\xi}}{(n\eta_1\eta_2)^k}\E \abs{R_t}^{p-k}  + n^\xi\Bigl(1+\frac{1}{\sqrt{n}\eta_\ast}\Bigr) \E\biggl[\frac{\abs{R_t}^{p-1}}{n\eta_1\eta_2} + \abs{R_t}^{p-3}\Bigl(\frac{1}{n\eta_1\eta_2}\Bigr)^3\biggr].
    \end{equation}
\end{proposition}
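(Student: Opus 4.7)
The plan is to apply Itô's formula to $F_t := \abs{R_t}^p$ along the OU flow~\eqref{eq:OU2} and then to replace the resulting drift by a complex cumulant expansion. Writing $\alpha = (a,b)$ and using $\partial_{x_\alpha} G_i^t = -G_i^t \Delta^\alpha G_i^t$, Itô's formula gives, after taking expectation and dropping the martingale part,
\begin{equation*}
    \partial_t\E F_t \;=\; \E\Bigl[-\tfrac{1}{2}\sum_\alpha x_\alpha \partial_{x_\alpha} F_t + \tfrac{1}{2n}\sum_\alpha \partial_{x_\alpha}\partial_{\bar x_\alpha} F_t\Bigr] + \mathrm{c.c.}
\end{equation*}
Using the complex cumulant expansion $\E[x_\alpha \Phi]=\sum_{s,r\ge 0}\frac{c_{s+1,r}(X_t)}{s!\,r!}\E[\partial_{x_\alpha}^s\partial_{\bar x_\alpha}^r\Phi]$ with $\Phi=\partial_{x_\alpha}F_t$, the second-cumulant term $c_{1,1}\E[\partial_{\bar x_\alpha}\partial_{x_\alpha}F_t]/n$ cancels exactly the Itô correction (since $c_{1,1}=1/n$), while the remaining order-two cumulants vanish by $c_{2,0}=c_{0,2}=0$ (equivalent to $\E\chi^2=0$). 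What survives is a sum over cumulants of total order $m+1:=s+r+1\ge 3$, bounded by $\abs{c_{s+1,r}(X_t)}\lesssim n^{-(s+r+1)/2}$ thanks to~\cref{ass:1}.

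\textbf{Combinatorial reduction.} For each fixed cumulant order $m+1\ge 3$, I expand $\partial_{x_\alpha}^s\partial_{\bar x_\alpha}^r\partial_{x_\alpha}F_t$ by Leibniz across the $p$ factors of $F_t=(R_t\overline{R_t})^{p/2}$. If exactly $k\le m+1$ of these factors receive at least one derivative, with multiplicities $\ell_1+\cdots+\ell_k=m+1$, then a typical term has the shape $\abs{R_t}^{p-k}\prod_j\partial^{(\ell_j)}R_t$, where $\partial^{(\ell_j)}R_t$ is an averaged trace of $\ell_j+2$ resolvents with $\ell_j$ inserted copies of $\Delta^\alpha$. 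Each \emph{single} derivative ($\ell_j=1$), after the $\sim n^2$ summation over $\alpha$ and an application of the Ward identity $G_iG_i^*=\Im G_i/\eta_i$ together with the single-resolvent local law~\eqref{eq:singlegllaw}, contributes $O((n\eta_1\eta_2)^{-1})$. The ``diagonal'' arrangements ($\ell_j=1$ for all $j$, so $m+1=k$) then yield error terms of the form $n^{2-k/2+\xi}(n\eta_1\eta_2)^{-k}\E\abs{R_t}^{p-k}$, matching the main sum of~\eqref{eq:GFTboundgood} for $k\ge 4$. The $k=2$ diagonal piece can be absorbed into the $k=1$ and $k=3$ terms via the AM--GM inequality
\begin{equation*}
    \abs{R_t}^{p-2}(n\eta_1\eta_2)^{-2}\le\tfrac{1}{2}\bigl(\abs{R_t}^{p-1}(n\eta_1\eta_2)^{-1}+\abs{R_t}^{p-3}(n\eta_1\eta_2)^{-3}\bigr),
\end{equation*}
together with the rough bound $\abs{R_t}\prec n^\xi$ to soak up the extra power of $n$ coming from the third cumulant.

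\textbf{Concentrated arrangements and the main obstacle.} The contributions $k=1$ and $k=3$ in the last bracket of~\eqref{eq:GFTboundgood} arise from the ``concentrated'' Leibniz arrangements in which some $\ell_j\ge 2$, producing a long resolvent chain $\braket{G_1\Delta^{\alpha_1}G_1\cdots G_1AG_2\Delta^{\beta_1}G_2\cdots G_2B}$ of length $\ge 4$. This is the principal technical difficulty: a naive bound $\norm{G_i}\le\abs{\eta_i}^{-1}$ would lose several powers of $\eta_*^{-1}$ and spoil the claimed inequality. The resolution is to apply the Schwarz--Ward reduction~\eqref{eq:reflat} iteratively, isolating the block $G_1AG_2$ and reducing the remainder to $\braket{\Im G_1A\Im G_2A^*}^{1/2}/\sqrt{\eta_*\abs{\eta_1\eta_2}}$. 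The factor $\braket{\Im G_1A\Im G_2A^*}^{1/2}$ is then estimated in two complementary ways: a trivial norm estimate of size $O(1)$, responsible for the ``$1$'' in the prefactor $(1+1/(\sqrt n\eta_*))$, and a refined estimate of size $O((\sqrt n\eta_*)^{-1})$ obtained from the averaged two-resolvent local law~\eqref{eq:goodll} applied to the OU-evolved matrix $X_t$ (which is available in the required regime via~\cref{pro:charaveinfin}). Adding the two regimes and multiplying by the combinatorial prefactors from the Leibniz expansion yields the last two terms of~\eqref{eq:GFTboundgood} and completes the plan.
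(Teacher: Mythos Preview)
Your setup (It\^o plus cumulant expansion, Leibniz rule, and the a priori bound $|\partial_{\bm\alpha}R_t|\prec (n\eta_1\eta_2)^{-1}$) matches the paper and correctly produces the $k\ge 4$ sum in~\eqref{eq:GFTboundgood}. The gap is entirely in your treatment of the third-order cumulant terms $(\partial^3 R)R^{p-1}$, $(\partial^2R)(\partial R)R^{p-2}$, $(\partial R)^3 R^{p-3}$.

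First, the naive bound you quote for these is $n^{1/2}(n\eta_1\eta_2)^{-k}$ for $k=1,2,3$, which is a factor $n^{1/2}$ too large compared with the target prefactor $(1+(\sqrt n\,\eta_\ast)^{-1})$. You propose to recover this loss by the trace-level Schwarz--Ward reduction~\eqref{eq:reflat} and then by estimating $\braket{\Im G_1 A\Im G_2A^*}^{1/2}$ either trivially by $O(1)$ or by the two-resolvent local law as $O((\sqrt n\,\eta_\ast)^{-1})$. Both estimates are incorrect: the quantity $\braket{\Im G_1 A\Im G_2A^*}$ has a deterministic leading term of size $\sim(|z_1-z_2|^2+\eta^*)^{-1}$, hence its square root is $\sim\eta_\ast^{-1/2}$ in the worst case, not $O(1)$ and certainly not $O((\sqrt n\,\eta_\ast)^{-1})$. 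Moreover, invoking~\eqref{eq:goodll} (or~\cref{pro:charaveinfin}) here is circular, since that local law is established only \emph{after} the present GFT step in the inductive scheme of the proof of~\cref{theo:impll}.

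Second, and more structurally, the reduction~\eqref{eq:reflat} is a bound on \emph{traces} $\braket{G_1AG_2BG_1E}$ used in the characteristic-flow section; it is not the right tool for the GFT terms, which are sums over $a,b$ of \emph{products of matrix entries} such as $(G_1AG_2BG_1)_{ab}G_{aa}G_{bb}$ or $(GAG)_{aa}(GBG)_{bb}G_{ab}$. The paper handles these entrywise: for terms with at least two off-diagonal resolvent chains one sums into a full trace and gains $(n\eta_\ast)^{-1}$ via two Ward estimates (this is how $(\partial R)^3$ is done, cf.~\eqref{RRR est}); for the remaining terms with diagonal factors $G_{aa},G_{bb}$ one splits $G_{aa}=m+(G-m)_{aa}$, uses the isotropic local law for the fluctuation, and for the $m$-part performs an \emph{isotropic resummation} $\sum_b(\cdots)_{ab}=\braket{\bm e_a,(\cdots)\bm 1_a}$; the hardest remaining pieces (those in~\cref{lemma extra terms}) require one or two further cumulant expansions $G_{ab}=M_{ab}-(M\underline{WG})_{ab}+\ldots$ to generate additional off-diagonal gains. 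None of this machinery appears in your proposal, and without it the $k=1,2,3$ terms cannot be closed at the claimed precision.
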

From~\cref{gft prop} we obtain the following~\cref{pro:GFTf1} by integration:
\begin{proposition}
    \label{pro:GFTf1} Let $X$ be an i.i.d.\ matrix, and let $X_t$ the solution of the OU flow \eqref{eq:OUaa}, with initial data $X_0=X$. Then for any small $\tau,\epsilon>0$, for any $1\ge |\eta_i|\ge n^{-1+\epsilon}$, $\abs{z_i}\le 1-\tau$, and for any $p\in \mathbf{N}$, denoting $G_i^t:=(H_t-Z_i-\ii\eta_i)^{-1}$, it holds:
    \begin{equation}
        \label{eq:GFTfav}
        \begin{split}
            \Big(\mathbf{E}|\braket{(G_1^0AG_2^0-M_{12}^A)B}|^p\Big)^{1/p} &\lesssim e^{t \left(\frac{\eta^*+|z_1-z_2|^2}{\eta^*}+\frac{1}{\sqrt{n}\eta_*^{3/2}}\right)} \Bigg[\Big(\mathbf{E}|\braket{(G_1^tAG_2^t-M_{12}^A)B}|^p\Big)^{1/p}\\
            &\quad+\frac{n^\xi}{n\eta_*\eta^*}\left(\eta_*^{1/6}+n^{-1/10}+\left(\frac{\eta^*}{\eta^*+|z_1-z_2|^2}\right)^{1/4}\right)\Bigg],
        \end{split}
    \end{equation}
    for any $0\le t\lesssim 1$ and any small $\xi>0$, where $A,B$ are deterministic matrices with $\|A\|+\|B\|\le 1$, and $\eta_*:=|\eta_1|\wedge\abs{\eta_2}$, $\eta^*:=|\eta_1| \vee |\eta_2|$.
\end{proposition}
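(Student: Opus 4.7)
The plan is to turn Proposition~\ref{gft prop} into a Gronwall-type inequality for $\Phi_s := \E\abs{R_s}^p$, $s\in[0,t]$, and integrate it backwards from $s=t$ to $s=0$; the factor $e^{t(\cdots)}$ in~\eqref{eq:GFTfav} ultimately comes from this exponential integration. Note that $M_{12}^A$ is independent of $s$ because the first two moments of $X_s$ are preserved by the Ornstein-Uhlenbeck flow~\eqref{eq:OU2}, so the entire $s$-dependence of $R_s$ sits in $G_i^s$.

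First I would apply Hölder's inequality, $\E\abs{R_s}^{p-k}\le \Phi_s^{(p-k)/p}$, to each lower-moment term in~\eqref{eq:GFTboundgood}, and then linearise each resulting $c_k\Phi_s^{(p-k)/p}$ by Young's inequality with conjugate exponents $p/(p-k)$ and $p/k$,
\[
c_k\Phi_s^{(p-k)/p}\le \frac{(p-k)\delta_k}{p}\Phi_s+\frac{k}{p}\delta_k^{-(p-k)/k}c_k^{p/k},
\]
with free parameters $\delta_k>0$. Collecting contributions yields a linearised differential inequality
\[
\abs{\partial_s\Phi_s}\le A\,\Phi_s + B^p,
\]
where $A$ absorbs the linear-in-$\Phi_s$ Young pieces (including those of the $k=1,3$ terms, which carry the extra prefactor $1+1/(\sqrt n\eta_*)$) and $B^p$ absorbs the $c_k^{p/k}$ contributions. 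Setting $g(s):=(\Phi_s+B^p)^{1/p}$, a short manipulation converts this into $\abs{g'(s)}\le (A/p)g(s)+B/p$, and the standard Gronwall lemma integrated backwards from $s=t$ to $s=0$ gives $g(0)\le e^{At/p}(g(t)+Bt)$. Since $\Phi_s^{1/p}\le g(s)\le \Phi_s^{1/p}+B$ and $t\lesssim 1$, absorbing $p$-dependent constants into $\lesssim$ produces exactly the shape of~\eqref{eq:GFTfav}.

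The main obstacle is the precise choice of the free parameters $\delta_k$ so that $A$ and $B$ match the claimed expressions. Heuristically, the $(\eta^*+\abs{z_1-z_2}^2)/\eta^*$ summand in $A$ reflects the amplification by $\norm{M_{12}^A}\lesssim 1/(\eta^*+\abs{z_1-z_2}^2)$ that multiplies the linear-in-$\Phi_s$ Young piece, while the $1/(\sqrt n\eta_*^{3/2})$ summand is the footprint of the $k=1,3$ terms after balancing against their $1/(\sqrt n\eta_*)$ prefactor. For $B$, the $n^{-1/10}/(n\eta_*\eta^*)$ component is pinned by the $k=5$ term via $(n^{-1/2}/(n\eta_1\eta_2)^5)^{1/5}$, the $\eta_*^{1/6}/(n\eta_*\eta^*)$ contribution comes from optimising the $k=3$ term against its $1/(\sqrt n\eta_*)$ prefactor, and the $\bigl(\eta^*/(\eta^*+\abs{z_1-z_2}^2)\bigr)^{1/4}$ factor originates from the improved initial-condition bound supplied by Part~2 of Proposition~\ref{pro:charaveinfin}, fed into the characteristic-flow step. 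Because different $k$-terms dominate in different sub-regimes of $(\eta_*,\eta^*,\abs{z_1-z_2})$, one must tune each $\delta_k$ regime-by-regime, and this piecewise Young optimisation is the delicate bookkeeping at the heart of the proof.
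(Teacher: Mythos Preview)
Your overall strategy (H\"older to reduce $\E|R_s|^{p-k}$ to $\Phi_s^{(p-k)/p}$, Young to linearise, then Gronwall) is correct and is essentially what the paper does. However, your heuristic identification of where the individual pieces of $A$ and $B$ come from has two concrete errors that would derail the implementation.

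First, the factor $\bigl(\eta^*/(\eta^*+|z_1-z_2|^2)\bigr)^{1/4}$ does \emph{not} come from Part~2 of Proposition~\ref{pro:charaveinfin}. The characteristic flow plays no role whatsoever in the proof of Proposition~\ref{pro:GFTf1}; this is a pure GFT statement at fixed spectral parameters, and Proposition~\ref{pro:charaveinfin} is only combined with it later, in the proof of Theorem~\ref{theo:impll}. The $1/4$-power is produced \emph{inside} the GFT estimate from the $k=4$ term of~\eqref{eq:GFTboundgood}, via the algebraic identity
\[
\Bigl(\frac{1}{n\eta_1\eta_2}\Bigr)^r = \frac{\eta^*+|z_1-z_2|^2}{\eta^*}\Bigl(\frac{1}{n\eta_*\,(\eta^*)^{1-1/r}(\eta^*+|z_1-z_2|^2)^{1/r}}\Bigr)^r,
\]
which for $r=4$ gives exactly $\bigl(\eta^*/(\eta^*+|z_1-z_2|^2)\bigr)^{1/4}$ in the $B$-term. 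In your Young-parameter language this amounts to choosing $\delta_4\sim(\eta^*+|z_1-z_2|^2)/\eta^*$.

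Second, the same identity shows that the rate $(\eta^*+|z_1-z_2|^2)/\eta^*$ in $A$ is not an ``amplification by $\norm{M_{12}^A}$''---the deterministic approximation $M_{12}^A$ never appears in this argument. It is inserted artificially: one multiplies and divides by $\eta^*+|z_1-z_2|^2$, sending one factor to the rate and the reciprocal to sharpen $B$. The companion identity $1/(\sqrt{n}\eta_*)\cdot(1/(n\eta_1\eta_2))^r=(1/(\sqrt{n}\eta_*^{3/2}))\cdot(\eta_*^{1/(2r)}/(n\eta_1\eta_2))^r$ handles the $k=1,3$ terms and produces both the $1/(\sqrt{n}\eta_*^{3/2})$ rate and (at $r=3$) the $\eta_*^{1/6}$ contribution to $B$, as you correctly guessed. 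With these two identities applied uniformly for $r=1,3,4$ (and the trivial $n^{2/k-1/2}\le n^{-1/10}$ for $k\ge 5$), no regime-by-regime tuning is needed; a single application of Young $a^{p-r}b^r\lesssim a^p+b^p$ then yields the linearised inequality $|\partial_s\Phi_s|\lesssim A(\Phi_s+B^p)$ and Gronwall finishes.
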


We are now ready to prove Theorem~\ref{theo:impll}.

\begin{proof}[Proof of Theorem~\ref{theo:impll}]

    The proof of this theorem follows by an induction argument, and it is divided into two cases: (i) $\eta^*\ge|z_1-z_2|^2$, (ii) $\eta^*<|z_1-z_2|^2$. Since the proofs of (i) and (ii) are analogous we mostly focus on the proof in case (i) and then explain the minor changes for the proof of case (ii).

    We start with the first step of the induction. By \cite[Theorem 5.2]{1912.04100} for any $z_1,z_2$ it holds
    \begin{equation}
        \label{eq:inll}
        \big|\braket{(G^{z_1}(\ii\eta_1)AG^{z_2}(\ii\eta_2)-M_{12}^A)B}\big|\lesssim \frac{n^{2\xi}}{n\eta_*\sqrt{|\eta_1\eta_2|}},
    \end{equation}
    with very high probability for any small $\xi>0$ uniformly in $|\eta_*|\gtrsim n^{-\xi}$. Fix $|z_1|,|z_2|\le 1-\tau$
    such that $|z_1-z_2|\le n^{-\omega_d}$, and $\eta_1,\eta_2$ such that $\eta_*\ge n^{-1/3}$
    and $\eta^*\ge |z_1-z_2|^2$, and fix $T=n^{-\xi}$, then by
    Lemma~\ref{lem:charpropert} there exist $\eta_{1,0},\eta_{2,0}, z_{1,0},z_{2,0}$, with $|\eta_{i,0}|\gtrsim n^{-\xi}$, $|z_{i,0}|\le 1-\tau/2$, such that for the solution of \eqref{eq:matchar} with initial condition $\eta_{i,0}$, $z_{i,0}$ it holds $\eta_{i,T}=\eta_i$, $z_{i,T}=z_i$. Additionally, since \eqref{eq:incond} for those $\eta_{i,0},z_{i,0}$ is verified thanks to \eqref{eq:inll},
    by Part 1 of Proposition~\ref{pro:charaveinfin} we conclude that
    \begin{equation}
        \label{eq:onestep}
        \big|\braket{(G_T^{z_1}(\ii\eta_1)AG_T^{z_2}(\ii\eta_2)-M_{12,T}^A)B}\big|\lesssim \frac{n^{3\xi}}{n\eta_*\sqrt{|\eta_1\eta_2|}},
    \end{equation}
    with very high probability uniformly in $\eta_*\gtrsim n^{-1/3}$ and $\eta^*\ge |z_1-z_2|^2$. Then, by Proposition~\ref{pro:GFTf1}, we readily conclude that
    \begin{equation}
        \label{eq:onestepgft}
        \big|\braket{(G^{z_1}(\ii\eta_1)AG^{z_2}(\ii\eta_2)-M_{12}^A)B}\big|\lesssim \frac{n^{3\xi}}{n\eta_*\sqrt{|\eta_1\eta_2|}},
    \end{equation}
    with very high probability uniformly in $\eta_*\gtrsim n^{-1/3}$ and $\eta^*\ge |z_1-z_2|^2$, i.e. the
    Gaussian component added in Proposition~\ref{pro:charaveinfin} can be completely removed using
    Proposition~\ref{pro:GFTf1}. This concludes the first step of the induction that reduced the lower threshold for $\eta_*$ from $|\eta_*|\gtrsim n^{-\xi}$ in~\eqref{eq:inll} to $|\eta_*|\gtrsim n^{-1/3}$ in \eqref{eq:onestepgft}.

    For the general induction  step define $\eta^{(k)}:=n^{-1+(2/3)^k}\vee n^{-1+\epsilon}$, we now show that if \eqref{eq:onestepgft} holds for $\eta_*\ge \eta^{(k)}$ then it holds for $\eta_*\ge \eta^{(k+1)}$ as well. Pick $z_1,z_2$ as above and $\eta_1,\eta_2$ such that $\eta_*\ge \eta^{(k+1)}$ and $\eta^*\ge |z_1-z_2|^2$.
    Choose $T=\eta^{(k)}$ and use the local law  \eqref{eq:onestepgft} for $\eta_*\ge \eta^{(k)}$
    as an initial input to apply Part 1 of Proposition~\ref{pro:charaveinfin} again. With the output
    of this step,  together with  an  application of Proposition~\ref{pro:GFTf1} for $\eta_*\ge \eta^{(k+1)}$, we conclude that \eqref{eq:onestepgft} holds for $\eta_*\ge \eta^{(k+1)}$. Iterating this procedure $k\sim |\log \epsilon|$ times we conclude \eqref{eq:goodll} for $\eta^*\ge |z_1-z_2|^2$ and $\eta_*\ge n^{-1+\epsilon}$.
    Note that the apparently accumulating factors of $n^\xi$ at each step are not a problem since the exponent $\xi$ can
    always be redefined before every iteration step. Since $\epsilon$ is given at the beginning, the number
    of iteration steps is finite, hence $\xi$ needs readjustment only finitely many times.

    We now prove Theorem~\ref{theo:impll} for the complementary
    case $\eta^*<|z_1-z_2|^2$; as before we proceed by induction. We start describing the first step of the induction. Fix
    $|z_1|,|z_2|\le 1-\tau$ such that $|z_1-z_2|\le n^{-\omega_d}$,
    $\eta_1,\eta_2$ with $|\eta_i|<|z_1-z_2|^2$, and choose $T=C|z_1-z_2|^2$ for a constant $C>0$.
    Applying Lemma~\ref{lem:charpropert},
    there exist $|\eta_{i,0}|\ge |z_1-z_2|^2$ and $z_{i,0}$ as initial conditions of the characteristics flow
    so that $\eta_{i,T}=\eta_i$ and $z_{i,T}=z_i$. Additionally, since $T\lesssim |z_{1,0}-z_{2,0}|^2$, by \eqref{comp1} it follows that $|\eta_{i,0}|\lesssim |z_{1,0}-z_{2,0}|^2$.   Then by an application of  Part 2 of  Proposition~\ref{pro:charaveinfin}, with $\mathcal{E}(n,\eta_{1,0},\eta_{2,0})=(n\eta_{1,0}\eta_{2,0})^{-1}$ for the  $\eta_{i,0},z_{i,0}$ from Lemma~\ref{lem:charpropert} and Proposition~\ref{pro:GFTf1} we conclude
    \begin{equation}
        \label{eq:onestepgftbet}
        \begin{split}
            \big|\braket{(G^{z_1}(\ii\eta_1)AG^{z_2}(\ii\eta_2)-M_{12}^A)B}\big|&\lesssim \frac{n^\xi}{n\eta_*\eta^*}\left(\eta_*^{1/6}+n^{-1/10}+\left(\frac{\eta^*}{\eta^*+|z_1-z_2|^2}\right)^{1/4}\right) \\
            &\quad+\frac{n^{2\xi}}{n\sqrt{|\eta_1\eta_2|\eta_{*,T}(\eta^*+|z_1-z_2|^2)}}+\frac{n^{3\xi}}{(n\eta_*)^{3/2}\sqrt{|\eta_1\eta_2|}} \\
            &\quad+n^{2\xi} \mathcal{E}(n,\eta_{1,0},\eta_{2,0}) \\
            &\lesssim  \frac{n^{3\xi}}{n\eta_*^{3/2}(\eta^*)^{1/2}}\left(\eta_*^{1/6}+n^{-1/10}+\frac{1}{\sqrt{n\eta_*}}+\left(\frac{\eta^*}{\eta^*+|z_1-z_2|^2}\right)^{1/4}\right)
        \end{split}
    \end{equation}
    holds with very high probability uniformly in $\eta_*\ge |z_1-z_2|^4\vee n^{-1/3}$.
    Note that in the last inequality in~\eqref{eq:onestepgftbet} we used that $\mathcal{E}(n,\eta_{1,0},\eta_{2,0})=(n\eta_{1,0}\eta_{2,0})^{-1}\lesssim [n\eta_*(\eta^*+|z_1-z_2|^2)]^{-1}$ since $|\eta_{i,0}|\ge |z_1-z_2|^2$.
    Along the flow $z_1$ and $z_2$ also move slightly, but it is easy to check that $|z_{1,t}-z_{i,t}|\sim |z_1-z_2|$ for any $0\le t\le T$ since $T\ll |z_1-z_2|$ and the speed of $z_{i,t}$ is bounded. Therefore we ignored the difference between $|z_{1,t}-z_{i,t}|$ and $|z_1-z_2|$ in the above estimates.

    The induction step is now exactly as above using $\widetilde{\eta}^{(k)}:=\big[ |z_1-z_2|^{2(k+1)}\vee n^{-1+(2/3)^k}\big]\vee n^{-1+\epsilon}$ instead of $\eta^{(k)}$ at each induction step and using
    \begin{equation}
        \label{eq:errortime0}
        \mathcal{E}(n,\eta_{1,0},\eta_{2,0})=  \frac{n^{3k\xi}}{n\eta_{*,0}^{3/2}(\eta_0^*)^{1/2}}\left(\eta_{*,0}^{1/6}+n^{-1/10}+\frac{1}{\sqrt{n\eta_{*,0}}}+\left(\frac{\eta_0^*}{\eta_0^*+|z_1-z_2|^2}\right)^{1/4}\right)
    \end{equation}
    for $T= \widetilde{\eta}^{(k)}$ and $|\eta_{i,0}|\ge \widetilde{\eta}^{(k)}$ as an input for  Part 2 of Proposition~\ref{pro:charaveinfin}. The application of Proposition~\ref{pro:GFTf1} is completely analogous to the
    case $\eta^*\ge |z_1-z_2|^2$ in the first part of the proof.
    In particular, at the $k+1$-th step we get that for $\eta_*\ge \widetilde{\eta}^{(k+1)}$ it holds
    \begin{equation}
        \begin{split}
            \label{eq:finproof}
            \big|\braket{(G^{z_1}(\ii\eta_1)AG^{z_2}(\ii\eta_2)-M_{12}^A)B}\big|&\lesssim  \frac{n^{3k \xi}}{n\eta_*\eta^*}\left(\eta_*^{1/6}+n^{-1/10}+\left(\frac{\eta^*}{\eta^*+|z_1-z_2|^2}\right)^{1/4}\right) \\
            &\quad+\frac{n^{2\xi}}{n\sqrt{|\eta_1\eta_2|\eta_{*,T}(\eta^*+|z_1-z_2|^2)}}+\frac{n^{3\xi}}{(n\eta_*)^{3/2}\sqrt{|\eta_1\eta_2|}} \\
            &\quad+n^{2\xi} \mathcal{E}(n,\eta_{1,0},\eta_{2,0}) \\
            &\lesssim  \frac{n^{3(k+1)\xi}}{n\eta_*^{3/2}(\eta^*)^{1/2}}\left(\eta_*^{1/6}+n^{-1/10}+\frac{1}{\sqrt{N\eta_*}}+\left(\frac{\eta^*}{\eta^*+|z_1-z_2|^2}\right)^{1/4}\right),
        \end{split}
    \end{equation}
    where we used the expression of $\mathcal{E}(n,\eta_{1,0},\eta_{2,0})$ from \eqref{eq:errortime0}.
    In the last inequality of \eqref{eq:finproof} we used the definition of $\widetilde{\eta}^{(k)}$ and
    the fact that $|\eta_{i,t}|$ is decreasing in time to show that $\mathcal{E}(n,\eta_{1,0},\eta_{2,0})$ is smaller than the last line of \eqref{eq:finproof} (recall that $\eta_i=\eta_{i,T}$).
    Iterating this procedure $\sim 1/(2\omega_d)\vee |\log \epsilon|$ times we conclude the proof of this theorem in the case $\eta^*<|z_1-z_2|^2$ as well. We remark that, also in this case, the accumulating factors of $n^\xi$ is not a problem,
    since the exponent $\xi$ can  always be redefined before every iteration step and the number of iteration steps is finite.

\end{proof}

We conclude this section with the proofs of Propositions~\ref{gft prop}--\ref{pro:GFTf1}.

\begin{proof}[Proof of~\cref{pro:GFTf1}]

    By \eqref{eq:GFTboundgood}, estimating $n^{-2/k-1/2}\le n^{-1/10}$ for any $k\ge 5$ in the first term of \eqref{eq:GFTboundgood}, we get
    \begin{equation}
        \label{eq:befyoung}
        \abs{\dif \E \abs{R_t}^p} \lesssim \left(\frac{n^\xi}{n\eta_1\eta_2}\right)^4|R_t|^{p-4}+\sum_{k=5}^p\left(\frac{n^{-1/10+\xi}}{n\eta_1\eta_2}\right)^k+\Bigl(1+\frac{1}{\sqrt{n}\eta_\ast}\Bigr) \E\biggl[\frac{\abs{R_t}^{p-1}}{n\eta_1\eta_2} + \abs{R_t}^{p-3}\Bigl(\frac{1}{n\eta_1\eta_2}\Bigr)^3\biggr].
    \end{equation}
    We now estimate
    \begin{equation}
        \begin{split}
            \label{eq:usedb}
            \left(\frac{n^\xi}{n\eta_1\eta_2}\right)^r|R_t|^{p-r}&\le \left(\frac{\eta^*+|z_1-z_2|^2}{\eta^*}\right)\left(\frac{n^\xi}{n\eta_*(\eta^*)^{1-1/r}(|z_1-z_2|^2+\eta^*)^{1/r}}\right)^r|R_t|^{p-r}, \\
            \frac{1}{\sqrt{n}\eta_*}\left(\frac{1}{n\eta_1\eta_2}\right)^r|R_t|^{p-r}&\le \frac{1}{\sqrt{n}\eta_*^{3/2}}\left(\frac{\eta_*^{1/2r}}{n\eta_1\eta_2}\right)^r|R_t|^{p-r}.
        \end{split}
    \end{equation}
    Then, combining \eqref{eq:befyoung} with \eqref{eq:usedb}, where we use the first bound for $r=1,3,4$ and the second bound for $r=1,3$, and using that $\eta^*\le 1$, $(\eta^*+|z_1-z_2|^2)/\eta^*\ge 1$ we conclude
    \begin{equation}
        \label{eq:bafty}
        \abs{\dif \E \abs{R_t}^p}\lesssim \left(\left(\frac{\eta^*+|z_1-z_2|^2}{\eta^*}\right)^4+\frac{1}{\sqrt{n}\eta_*^{3/2}}\right)\left(|R_t|^p+\left(\frac{n^\xi\eta_*^{1/6}+n^{-1/10+\xi}}{n\eta_*\eta^*}\right)^p+\left(\frac{n^\xi}{n\eta_*(|z_1-z_2|^2+\eta^*)}\right)^p\right).
    \end{equation}
    Finally, \eqref{eq:GFTfav} readily follows from \eqref{eq:bafty} by a simple Gronwall inequality.

\end{proof}

\begin{proof}[Proof of~\cref{gft prop}]
    For notational simplicity we drop the absolute value. In fact the whole proof verbatim applies to \(\prod_{k=1}^pR_t(\sigma_{k,1}\eta_1,\sigma_{k,2}\eta_2)\) for any \(\sigma_{k,1},\sigma_{k,2}=\pm 1\) and therefore the absolute value can be obtained by choosing half the \(\sigma\)'s to be \(+1\) and the other half \(-1\) with \(A,B\) replaced by \(B^\ast,A^\ast\).

    By It\^{o}'s formula we have
    \begin{equation}
        \label{eq:evexp}
        \dif \E R_t^p=\mathbf{E}\left[-\frac{1}{2}\sum_\alpha w_\alpha(t)\partial_\alpha R_t^p+\frac{1}{2}\sum_{\alpha,\beta} \kappa(\alpha,\beta) \partial_\alpha\partial_\beta R_t^p\right],
    \end{equation}
    where $w_\alpha(t)$ are the entries of \(W_t\) for double-indices \(\alpha=(a,b)\in[2n]^2\), and \(\kappa(\alpha,\beta)\) denotes the joint cumulant of \(w_\alpha,w_\beta\). Performing a cumulant expansion\footnote{Such an expansion
        was first used in the random matrix context in~\cite{MR1411619} and later
        revived in~\cite{MR3678478, MR3405746}. Technically we use a truncated version of the expansion above, see e.g.~\cite{MR3941370,MR3678478,MR3941370}.
        The truncation error of the cumulant expansion after \(K=\mathrm{const}\cdot p\) terms can be estimated trivially by the single-\(G\) local law for resolvent entries, and by norm for entries of \(GAG\cdots\) resolvent chains.} in \eqref{eq:evexp} we conclude that
    \begin{equation}
        \label{eq:cumexp}
        \dif \E R_t^p= - \frac{1}{2} \sum_{k=3}^K\sum_{ab}\sum_{\bm\alpha\in \set{ab,ba}^k} \frac{\kappa(\bm\alpha)}{(k-1)!} \E \partial_{\bm\alpha} R_t^p + \Omega_K
    \end{equation}
    with an error term \(\Omega_K=\landauOprec{(n\eta_1\eta_2)^{-p}}\) for \(K=\mathrm{const}\cdot p\), where we recall from~\cref{eq:eqderres} that \(\sum_{ab}\) is a shorthand notation for $\sum_{a\le n}\sum_{b>n}+\sum_{a>n}\sum_{b\le n}$. We first establish a priori bounds on derivatives of \(R_t\) which are sufficient for higher order cumulants.
    \begin{lemma}\label{lemma a priori}
        For any \(\bm\alpha\in\set{ab,ba}^k\) and \(k\ge 1\) we have
        \begin{equation}\label{lemma a priori eq}
            \abs{\partial_{\bm\alpha}R_t} \prec \frac{1}{n\eta_1\eta_2}.
        \end{equation}
    \end{lemma}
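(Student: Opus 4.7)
The plan is to reduce the statement to entrywise bounds on resolvent chains via explicit iterated differentiation. Since $M_{12}^A$ is deterministic, $\partial_{\bm\alpha}R_t=\partial_{\bm\alpha}\braket{G_1^tAG_2^tB}$ and the $(2n)^{-1}$ prefactor from the normalized trace is preserved throughout. Iterating $\partial_{ab}G_i^t=-G_i^t\Delta^{ab}G_i^t$ and expanding by Leibniz, the $k$-th directional derivative becomes a signed sum of at most $C_k$ terms (with $C_k$ depending only on $k$), each of the form $(2n)^{-1}$ times a product of exactly $k$ matrix entries of chains $(G_{i_1}^tA_1G_{i_2}^tA_2\cdots A_{r-1}G_{i_r}^t)_{\alpha\beta}$, in which $A_j\in\{A,B,I\}$ have bounded norm and the labels $i_j\in\{1,2\}$ strictly alternate.

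The key combinatorial observation I will establish by induction is that every such chain has length at most $3$. The initial cyclic word $\Tr(G_1^tAG_2^tB)$ contains only two alternating $G$-sites, so the first derivative cuts at one of them and produces a single open chain of length $3$. Each subsequent derivative cuts one existing chain of length $r$ into two new chains of total length $r+1$; applying this to $r\le 3$ shows the maximum length is preserved at $3$ throughout. Writing $c_r$ for the number of length-$r$ chains in a given term, the constraints $\sum_r c_r=k$ and $\sum_r r c_r=k+2$ together with $c_r=0$ for $r\ge 4$ force $(c_2,c_3)\in\{(2,0),(0,1)\}$.

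For the individual entry estimate, I will combine Cauchy--Schwarz with the Ward identity $G_i^t(G_i^t)^*=\Im G_i^t/\eta_i$ and the isotropic form of the single-$G$ local law~\eqref{eq:singlegllaw}, which together yield
\[
\bigl\lVert(G_i^t)^*\vx\bigr\rVert^2=\braket{\vx,G_i^t(G_i^t)^*\vx}\prec\frac{\lVert\vx\rVert^2}{|\eta_i|}
\]
in the bulk. Splitting each chain at its middle, bounding the two endpoints by this inequality and the interior factors by the deterministic estimate $\lVert G_i^t\rVert\le|\eta_i|^{-1}$, one obtains
\[
\bigl|(G_{i_1}^tA_1\cdots A_{r-1}G_{i_r}^t)_{\alpha\beta}\bigr|\prec|\eta_{i_1}|^{-1/2}|\eta_{i_r}|^{-1/2}\prod_{j=2}^{r-1}|\eta_{i_j}|^{-1}.
\]
Specialised to alternating chains this reads: length $1$ contributes $\prec 1$, length $2$ contributes $\prec(|\eta_1\eta_2|)^{-1/2}$, and length $3$ contributes exactly $(|\eta_1\eta_2|)^{-1}$, \emph{independently} of which label sits at the two endpoints (since two endpoints of one label at weight $\tfrac12$ each plus one interior of the opposite label at weight $1$ yield $\eta_1^{-1}\eta_2^{-1}$ either way).

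Putting the pieces together, in both admissible configurations $(c_2,c_3)=(2,0)$ and $(c_2,c_3)=(0,1)$ the product of all $k$ entry bounds equals $(|\eta_1\eta_2|)^{-1}$. Multiplying by $(2n)^{-1}$ and absorbing the $n$-independent combinatorial constant $C_k$ into $\prec$ then gives $|\partial_{\bm\alpha}R_t|\prec 1/(n|\eta_1\eta_2|)$, as claimed. The only delicate point I anticipate is the inductive bookkeeping of the alternating/length-$\le 3$ property, i.e.\ showing that the splitting operation never lengthens a chain; once that is in hand the weight count is entirely mechanical.
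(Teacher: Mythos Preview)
Your proposal is correct and follows essentially the same approach as the paper. The paper states directly that every derivative is a sum of monomials of the three types $(G_1AG_2BG_1)\prod G$, $(G_2BG_1AG_2)\prod G$, or $(G_1AG_2)(G_2BG_1)\prod G$ (i.e.\ exactly your two cases $(c_2,c_3)=(0,1)$ and $(2,0)$), and then bounds the length-$2$ and length-$3$ chains by the same Cauchy--Schwarz/Ward/isotropic local law combination you use; you have simply supplied the inductive combinatorial justification that the paper leaves implicit.
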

    \begin{proof}
        By the differentiation identity $\partial_{ab}G=-G\Delta^{ab}G$ for the resolvent (which follows from applying the resolvent identity to the difference quotient) we have that
        \begin{equation}
            \partial_{ab} R_t
            = -\frac{(G_1AG_2BG_1)_{ba}+(G_2BG_1AG_2)_{ba}}{n}
        \end{equation}
        and therefore any derivative is a sum of monomials of types
        $$
            (G_1AG_2BG_1)\prod G, \quad  (G_2BG_1AG_2)\prod G, \quad \mbox{or}\quad  (G_1AG_2)(G_2BG_1)\prod G,
        $$
        where \(\prod G\) stands for some product of entries of either \(G_1,G_2\). By estimating \(\abs{G_{ab}}\prec 1\) (see \eqref{eq:singlegllaw}) and Schwarz-inequlities of the form
        \begin{equation}\label{entrywise R der}
            \begin{split}
                \abs{(G_1AG_2)_{ab}}&\le \sqrt{(G_2^\ast A^\ast AG_2)_{bb}(G_1G_1^\ast)_{aa}} \le \frac{\norm{A}\sqrt{(\Im G_2)_{bb}(\Im G_1)_{aa}}}{\sqrt{\eta_1\eta_2}} \prec \frac{1}{\sqrt{\eta_1\eta_2}}\\
                \abs{(G_1AG_2BG_1)_{ab}} &\le \sqrt{(G_1^\ast B^\ast G_2^\ast A^\ast AG_2BG_1)_{bb}(G_1G_1^\ast)_{aa}} \le \frac{\norm{A}\norm{B}\sqrt{(\Im G_1)_{aa}(\Im G_1)_{bb}}}{\eta_1\eta_2} \prec \frac{1}{\eta_1\eta_2}
            \end{split}
        \end{equation}
        the claim follows. Here we also used the operator-bound $0\le A^\ast A\le \norm{A}^2I$ and the Ward identity $GG^\ast = \Im G/\eta$.
    \end{proof}
    By distributing the derivatives~\cref{eq:cumexp} according to the Leibniz rule and using~\cref{lemma a priori} it follows that we can estimate the \(k\)-th order terms using \(\abs{\kappa(\bm\alpha)}\lesssim N^{-k/2}\) by
    \begin{equation}
        n^{2-k/2}\sum_{l=1}^{k\wedge p} \Bigl(\frac{1}{n\eta_1\eta_2}\Bigr)^{l} \abs{R_t}^{p-l}.%
    \end{equation}

    We now consider the third order terms (\(k=3\)) which symbolically (with \(\partial=\partial_{ab}+\partial_{ba}\) and \(R=R_t\) and ignored summations and constants) are given by
    \begin{equation}\label{cumulant third order}
        (\partial^3 R) R^{p-1} + (\partial^2 R)(\partial R) R^{p-2} + (\partial R)^3 R^{p-3}.
    \end{equation}
    We begin with the last term in~\cref{cumulant third order} which is given by
    \begin{equation}\label{RRR est}
        \begin{split}
            n^{-3/2}\abs{(\partial R)^3} &\lesssim n^{-9/2} \sum_{ab} \abs{(G_1AG_2BG_1)_{ba} + (G_2BG_1 A G_2)_{ba} }^3 \\
            &\prec \frac{1}{n^{9/2}\eta_1\eta_2}\sum_{ab}\Bigl(\abs{(G_1AG_2BG_1)_{ba}}^2+\abs{(G_2BG_1 A G_2)_{ba}}^2\Bigr)\\
            &= \frac{1}{n^{7/2}} \Bigl(\frac{\braket{\Im G_1AG_2B\Im G_1 B^\ast G_2^\ast A^\ast }}{\eta_1^3\eta_2}+\frac{\braket{\Im G_2BG_1A\Im G_2 A^\ast G_1^\ast B^\ast }}{\eta_1\eta_2^3}\Bigr)\\
            &\prec \frac{1}{n^{7/2}}\frac{\eta_\ast}{\eta_\ast^2(\eta_1\eta_2)^3}=
            \Bigl(\frac{1}{n\eta_1\eta_2}\Bigr)^3 \frac{1}{\sqrt{n}\eta_\ast},
        \end{split}
    \end{equation}
    where in the second step we used the entrywise estimate from~\cref{entrywise R der} for one of the factors, and in the third step performed the \(a,b\) summation for the other two. The power counting behind~\cref{RRR est} is rather simple: the naive size of \((\partial R)^3\) would be given by \(n^{-3/2}\times n^2\times (n\eta_1\eta_2)^{-3}\) if all three factors were estimated by the a priori estimate in~\cref{lemma a priori eq}, where the \(n^{-3/2}\) comes from the third order cumulant, \(n^2\) from the summation, and \((n\eta_1\eta_2)^{-3}\) from the a priori estimate in~\cref{lemma a priori eq}. However, by summation off-diagonal resolvent chains, i.e.\ products of resolvents and deterministic matrices evaluated in the $(a,b)$ or $(b,a)$-entry, can be estimated more efficiently.
    By two \emph{Ward estimates} we can gain an additional factor of \([(n\eta_\ast)^{-1/2}]^2\) over the naive size, giving \((n\eta_1\eta_2)^{-3} n^{-1/2}\eta_\ast^{-1}\) for the final estimate. The very same power counting principle applies also to the first two terms in~\cref{cumulant third order} whenever at least two off-diagonal resolvent chains appear (in which case all three resolvent chains are off-diagonal by parity) that allows us to sum up \(a,b\) into a full trace. Now we record the remaining terms explicitly:
    \begin{equation}\label{R RR est}
        \begin{split}
            n^{-3/2}\abs*{(\partial^3 R)} &\prec \frac{(\sqrt{n}\eta_\ast)^{-1}}{n\eta_1\eta_2} +  \abs*{\sum_{ab} \frac{(GAGBG)_{ab} G_{aa} G_{bb} + (GAG)_{ab}(GBG)_{aa}G_{bb} + G_{ab} (GAG)_{aa} (GBG)_{bb}}{n^{5/2}} }\\
            n^{-3/2}\abs{(\partial^2 R)(\partial R)} &\prec \frac{(\sqrt{n}\eta_\ast)^{-1}}{(n\eta_1\eta_2)^2} +  \abs*{\sum_{ab} \frac{(GAGBG)_{ab} \Bigl((GAGBG)_{aa} G_{bb} + (GAG)_{aa}(GBG)_{bb}\Bigr)}{n^{7/2}} }
        \end{split}
    \end{equation}
    where we dropped the subscripts from \(G\) for notational brevity as they play no role in the sequel.
    For the terms in~\cref{R RR est} with some \(G_{aa}\) or \(G_{bb}\) we further split \(G_{aa}=(G-m)_{aa}+m\) so that using the entry--wise local law \(\abs{(G-m)_{aa}}\prec (n\eta_\ast)^{-1/2}\) from~\cref{eq:singlegllaw} gives the same estimate as the first term on the right hand side of~\cref{R RR est}\footnote{For instance, for the $(GAGBG)(GAGBG)(G-m)$ term we obtain a bound of \[n^{-7/2}\eta^{-2}(n\eta)^{-1/2}\sum_{ab}\abs{(GAGBG)_{ab}}\le n^{-5/2}\eta^{-5/2}\braket{\abs{GAGBG}^2}^{1/2}\prec n^{-5/2}\eta^{-5}=(n\eta^2)^{-2}(\sqrt{n}\eta)^{-1},\]ignoring the difference of $\eta_1,\eta_2$ for convenience.}. For the \(m\) contribution we use a so called \emph{isotropic resummation} trick: for example for the second line of~\cref{R RR est} after splitting \(G_{bb}=m+(G-m)_{bb}\) for the \(m\)-contribution the \(b\)-index only appears once in \((GAGBG)_{ab}\) and can be summed into the inner product \(\braket{\bm e_a,GAGBG \bm 1_a}\)
    with the \(a\)-th unit vector \(\bm e_a\) and the constant-\((0,1)\) vector $\bm 1_a$ defined as
    \(\bm 1_a:=(0,\ldots,0,1,\ldots,1)\) for  \(a>n\) and
    \(\bm 1_a: = (1,\ldots,1,0,\ldots,0)\) for \(a\le n\) (recall that the $\sum_{ab}$ summation is restricted to either $a\le n,b>n$ or $a>n,b\le n$). Thus
    \begin{equation}
        \begin{split}
            &\abs*{\sum_{ab} \frac{(GAGBG)_{ab} (GAGBG)_{aa}}{n^{7/2}}}\\
            &\quad=\abs*{\sum_{a} \frac{\braket{\bm e_a, GAGBG\bm 1_a} (GAGBG)_{aa}}{n^{7/2}}} \\
            &\quad\prec \frac{\sqrt{\braket{\bm1_n,G^\ast B^\ast G^\ast A^\ast G^\ast E_1 GAGBG\bm 1_n}}+\sqrt{\braket{\bm1_{2n},G^\ast B^\ast G^\ast A^\ast G^\ast E_2 GAGBG\bm 1_{2n}}}}{n^{3}\eta_1\eta_2} \\
            &\quad\le \frac{\norm{A}\norm{B}\sqrt{\braket{\bm1_n,\Im G\bm 1_n}+\braket{\bm1_{2n},\Im G\bm 1_{2n}}}}{n^{3}\eta_1^2\eta_2^2\sqrt{\eta_1}} \prec \Bigl(\frac{1}{n\eta_1\eta_2}\Bigr)^2\frac{1}{\sqrt{n\eta_\ast}}
        \end{split}
    \end{equation}
    using Cauchy-Schwarz, the operator bound $B^\ast G^\ast A^\ast G^\ast E_1 GAGB\lesssim (\norm{B}^2\norm{A}^2\eta^{-4})I$ and the \emph{isotropic} local law \(\braket{\bm 1_a,\Im G\bm 1_a}\prec \norm{\bm 1_a}^2= n\). The same argument applies to the diagonal \(G_{aa},G_{bb}\) terms in the first line of~\cref{R RR est}. Together with the following~\cref{lemma extra terms} for the remaining terms of~\cref{R RR est} we thus conclude the proof of~\cref{gft prop}.
\end{proof}
\begin{lemma}\label{lemma extra terms}
    \begin{align}\label{extra 1}
        \abs*{n^{-5/2}\sum_{ab} \E G_{ab} (GAG)_{aa} (GBG)_{bb}}       & \prec \frac{1}{n\eta_1\eta_2}\Bigl(1+\frac{1}{\sqrt{n}\eta_\ast}\Bigr),              \\ \label{extra 2}
        \abs*{n^{-7/2}\sum_{ab} \E (GAGBG)_{ab} (GAG)_{aa} (GBG)_{bb}} & \prec \Bigl(\frac{1}{n\eta_1\eta_2}\Bigr)^2\Bigl(1+\frac{1}{\sqrt{n}\eta_\ast}\Bigr)
    \end{align}
\end{lemma}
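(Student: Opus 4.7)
The plan is to view each sum as a bilinear form \(\braket{\bm X, K\bm Y}\) with \(X_a := (GAG)_{aa}\), \(Y_b := (GBG)_{bb}\), and \(K\) equal to \(G\) for~\eqref{extra 1} and to \(GAGBG\) for~\eqref{extra 2}; both estimates will then follow from the elementary inequality \(\abs{\braket{\bm X, K\bm Y}}\le \norm{\bm X}\cdot\norm{K}_{\mathrm{op}}\cdot\norm{\bm Y}\), combined with separate bounds for the three factors.

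For the \(L^2\)-bounds on \(\bm X,\bm Y\), I will use the crucial structural fact that every occurrence of \(GAG\) in the Leibniz expansion of \(\partial^3 R_t^p\) is of the form \(G_i A G_j\) with \(i\neq j\): this is because \(A\) sits between \(G_1\) and \(G_2\) in \(R_t=\braket{G_1 A G_2 B-\ldots}\), and the derivative rule \(\partial_{ab}G_i=-G_i\Delta^{ab}G_i\) only splits resolvents of matching spectral parameter, preserving the alternation around \(A\). By Schwarz combined with the Ward identity \(G_iG_i^\ast=\Im G_i/\eta_i\),
\[
\abs{(G_iAG_j)_{aa}}^2 \le \norm{A}^2(G_iG_i^\ast)_{aa}(G_j^\ast G_j)_{aa}=\norm{A}^2\frac{(\Im G_i)_{aa}(\Im G_j)_{aa}}{\eta_i\eta_j},
\]
and summation in \(a\) using the isotropic local-law bound \((\Im G_i)_{aa}\prec 1\) together with \(\sum_a(\Im G_i)_{aa}\prec n\) yields \(\norm{\bm X}^2\prec n/(\eta_1\eta_2)\), and identically \(\norm{\bm Y}^2\prec n/(\eta_1\eta_2)\).

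The operator-norm bounds follow from \(\norm{G_i}_{\mathrm{op}}\le 1/\eta_i\): namely \(\norm{G}_{\mathrm{op}}\le 1/\eta_\ast\) for~\eqref{extra 1}, and, by the same chain-tracking argument (noting that \(GAGBG\) must be \(G_1AG_2BG_1\) or \(G_2BG_1AG_2\) up to cyclic rearrangement), \(\norm{GAGBG}_{\mathrm{op}}\le \norm{A}\norm{B}/(\eta_1\eta_2\eta_\ast)\) for~\eqref{extra 2}. Combining the three factors yields
\begin{align*}
n^{-5/2}\abs{\braket{\bm X,G\bm Y}}&\prec n^{-5/2}\cdot\frac{n}{\eta_1\eta_2}\cdot\frac{1}{\eta_\ast}=\frac{1}{n\eta_1\eta_2}\cdot\frac{1}{\sqrt{n}\eta_\ast},\\
n^{-7/2}\abs{\braket{\bm X,GAGBG\,\bm Y}}&\prec n^{-7/2}\cdot\frac{n}{\eta_1\eta_2}\cdot\frac{1}{\eta_1\eta_2\eta_\ast}=\Bigl(\frac{1}{n\eta_1\eta_2}\Bigr)^2\cdot\frac{1}{\sqrt{n}\eta_\ast},
\end{align*}
both well within the claimed targets. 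The only subtlety — and the main potential obstacle — is the chain-tracking verification that guarantees the mixed bound \(1/(\eta_1\eta_2)\) in \(\norm{\bm X}^2\prec n/(\eta_1\eta_2)\), rather than the strictly worse \(1/\eta_\ast^2\) that a degenerate chain \(G_iAG_i\) would force; this is ultimately secured by the fixed positions of \(A\) and \(B\) in the original resolvent chain \(G_1 A G_2 B\) being preserved throughout all three Leibniz-rule expansions.
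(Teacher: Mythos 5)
Your argument is correct and takes a genuinely different, considerably shorter route than the paper's. Where the paper first expands \(G = M - M\un{WG} + G\cO_\prec((n\eta_\ast)^{-1})\) and then performs (for~\eqref{extra 2}, iterated) cumulant expansions for the \(\un{WG}\) part combined with an isotropic resummation trick, you read the restricted sum directly as a bilinear form \(\langle\ov{\bm X},K\bm Y\rangle\) and invoke the elementary inequality \(\abs{\langle\ov{\bm X},K\bm Y\rangle}\le\norm{\bm X}\,\norm{K}_{\mathrm{op}}\,\norm{\bm Y}\), with the kernel bounded deterministically by its operator norm and the diagonal vectors bounded in \(\ell^2\) via the Ward identity together with the isotropic and averaged single-resolvent local laws. (A small point you should still record: the restriction \(\sum_{ab}\) to off-diagonal blocks means the kernel is really \(E_1KE_2+E_2KE_1\), which only loses a factor \(2\) in the operator norm; also the \(\eta_i\) may have either sign, so the Ward identities should carry \(|\eta_i|\).) Your resulting bound is \((n\abs{\eta_1\eta_2})^{-r}(\sqrt n\,\eta_\ast)^{-1}\) with \(r=1,2\), which is in fact slightly sharper than stated — it reproduces only the \((\sqrt n\eta_\ast)^{-1}\)-weighted part of the right-hand side — and amply suffices. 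As for the chain-tracking observation you flag as the ``main potential obstacle'': it is indeed essential for your route, since without the alternating structure \(G_1AG_2\), \(G_2BG_1\) you would only get \(\norm{\bm X}^2\prec n/\eta_\ast^2\), which is too weak. The observation is valid, for exactly the reason you give (the fixed positions of \(A,B\) in \(G_1AG_2B\) together with the self-splitting rule \(\partial_{ab}G_i=-G_i\Delta^{ab}G_i\)), and it is also implicitly used by the paper — the paper's ``the subscripts play no role'' remark amounts to saying that, thanks to the alternation, the final bounds depend on \(\eta_1,\eta_2\) only through the symmetric products \(\eta_1\eta_2\) and \(\eta_\ast\), so one can suppress the subscripts without ambiguity. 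The net effect is that your proof and the paper's both ultimately rely on the same structural input, but yours replaces the cumulant machinery by a one-line Cauchy--Schwarz, which is a clean simplification.
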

\begin{proof}[Proof of~\cref{lemma extra terms}]
    For~\cref{extra 1} we write
    \begin{equation}\label{G unWG}
        G=M-M\un{WG}+ G\landauOprec*{\frac{1}{n\eta_\ast}} :=M - M (WG+\SS[G]G) + G\landauOprec*{\frac{1}{n\eta_\ast}},
    \end{equation}
    c.f.~\cite[Eq.(5.2)]{1912.04100}, in terms of the ``underline renormalization'' from~\cref{underline def}. When using~\cref{G unWG} for \(G_{ab}\) in~\cref{extra 1} the \(M\)- and \(\landauOprec{\cdots}\)-contributions are trivially of the claimed size. For the \(\un{WG}\)-part we compute with another cumulant expansion
    \begin{equation}\label{1cum imrpov}
        \begin{split}
            \abs*{n^{-5/2}\sum_{ab} \E (\un{WG})_{ab} (GAG)_{aa} (GBG)_{bb}}
            &\lesssim n^{-7/2} \abs*{\sum_{abc} \E G_{cb} \partial_{ca}\Bigl((GAG)_{aa} (GBG)_{bb}\Bigr)} \\
            &\quad + n^{-5/2}\sum_{k\ge 2} \abs*{\sum_{abc} \sum_{\bm\beta \in\set{ac,ca}^k} \frac{\kappa(ac,\bm\beta)}{k!}\E \partial_{\bm\beta} \Bigl(G_{cb} (GAG)_{aa} (GBG)_{bb}\Bigr)}.
        \end{split}
    \end{equation}
    For the last term the trivial estimate is sufficient: the \(\partial_{\bm\beta}\)-derivative is bounded by \((\eta_1\eta_2)^{-1}\) c.f.~\cref{entrywise R der}, while the prefactor, i.e. the cumulant and the summation, contribute \(n^{-k/2}\). For the first term we exploit that the action of the \(\partial_{ca}\) derivative yields at least one additional off-diagonal resolvent chain, hence we gain an additional factor of \([(n\eta_\ast)^{-1/2}]^2\) over the naive size \(n^{-1/2}(\eta_1\eta_2)^{-1}\) to obtain a final bound of \(n^{-1/2}(\eta_1\eta_2)^{-1}(n\eta_\ast)^{-1}\), just as claimed. More explicitly, if for instance the derivative acts on the second $G$ factor, then we estimate
    \begin{equation*}
        \begin{split}
            n^{-7/2} \sum_{abc} \abs{G_{cb} (GAG)_{ac}G_{aa}(GBG)_{bb}}&\le n^{-7/2} \sum_{ab} \abs{G_{aa}}\abs{(GBG)_{bb}} \sqrt{\sum_c \abs{G_{cb}}^2}\sqrt{\sum_c\abs{(GAG)_{ac}}^2}\\
            &=  n^{-7/2} \sum_{ab} \abs{G_{aa}}\abs{(GBG)_{bb}} \sqrt{(G^\ast G)_{bb} (GAGG^\ast A^\ast G^\ast)_{aa}} \\
            &\prec n^{-3/2}\eta^{-3},
        \end{split}
    \end{equation*}
    just as claimed. We point out that in the last inequality we used \eqref{entrywise R der} to estimate all the factors in the second to last line.

    In order to prepare the more complicated estimate on~\cref{extra 2} we now explain the general power counting principle behind the improvement in~\cref{1cum imrpov}: the trivial estimate on the left hand side of~\cref{extra 1} using one off-diagonal gain for \(G_{ab}\) is by a factor of \(\sqrt{n\eta_\ast}\) larger than the right hand side. Performing the cumulant expansion for \(G=M-M\un{WG}+\ldots\) is neutral for the Gaussian (i.e.\ second order cumulant) term (one additional summation is compensated by \(\kappa(ac,ca)=1/n\)) but gains a factor of \(n^{-1/2}\) already for the third order cumulants. This and the additional gain in the Gaussian term due to the second off-diagonal resolvent chain is sufficient for the claimed bound.

    We now consider~\cref{extra 2} where the non-Gaussian terms, as well as the Gaussian terms with two off-diagonal resolvent chains, of the cumulant expansion can be estimated just as for~\cref{extra 1} using either the gain from the third cumulant or the additional Ward estimate. Thus we obtain
    \begin{equation}
        \begin{split}
            &%
            \abs*{n^{-7/2}\sum_{ab} \E (\un{WG}AGBG)_{ab} (GAG)_{aa} (GAG)_{bb}} \\
            & \quad \lesssim \Bigl(\frac{1}{n\eta_1\eta_2}\Bigr)^2\frac{1}{\sqrt{n}\eta_\ast} + \abs*{n^{-9/2}\sum_{abc} \E [(GAGBG)_{cc}G_{ab}+(GAG)_{cc}(GBG)_{ab}] (GAG)_{aa} (GBG)_{bb}  }.%
        \end{split}
    \end{equation}
    We now continue with another cumulant expansion~\cref{G unWG} for \(G_{ab}=M_{ab}-(M\un{WG})_{ab}+\ldots\) and \((GBG)_{ab}=(MBG)_{ab}-(M\un{WG}BG)_{ab}+\ldots\) and obtain
    \begin{equation}\label{iter 3}
        \begin{split}
            &\abs*{n^{-9/2}\sum_{abc} \E (GAGBG)_{cc}(M\un{WG})_{ab} (GAG)_{aa} (GAG)_{bb} } \prec \Bigl(\frac{1}{n\eta_1\eta_2}\Bigr)^2\Bigl(1+\frac{1}{\sqrt{n}\eta_\ast}\Bigr)\\
            &\abs*{n^{-9/2}\sum_{abc} \E (GAG)_{cc}(M\un{WG}BG)_{ab} (GAG)_{aa} (GBG)_{bb}  } \\
            &  \prec \Bigl(\frac{1}{n\eta_1\eta_2}\Bigr)^2\Bigl(1+\frac{1}{\sqrt{n}\eta_\ast}\Bigr) + \abs*{n^{-11/2}\sum_{abcd} \E (GAG)_{cc} (GBG)_{dd}(MG)_{ab} (GAG)_{aa} (GBG)_{bb}  } \prec \Bigl(\frac{1}{n\eta_1\eta_2}\Bigr)^2\Bigl(1+\frac{1}{\sqrt{n}\eta_\ast}\Bigr),
        \end{split}
    \end{equation}
    where for the first inequality we used that for the Gaussian term the \(\partial_{da}\)-derivative creates one additional off-diagonal resolvent chain. For the second inequality we kept the unique term in which the \(\partial_{da}\) derivative does not create an additional off-diagonal resolvent chain and estimated the remaining terms as before. Finally, for the last inequality we performed another cumulant expansion in \(G_{ab}=M_{ab}-(M\un{WG})_{ab}+\ldots=M_{ab}- n^{-1}\sum_e G_{eb} \partial_{ea}[\cdots]+\ldots\) and used that the \(\partial_{ea}\)-derivative creates a second off-diagonal resolvent chain which is sufficient to achieve the claimed bound. Finally the \(M\)- and \(\landauOprec*{\cdot}\)- contributions of~\cref{G unWG} towards~\cref{iter 3} can be trivially estimated.
\end{proof}

\appendix

\section{Additional technical results}
\label{app:techres}

Here we prove several technical inputs which are used in Section~\ref{sec:normllaw}.

\subsection{Proof of Proposition~\ref{prop:CLTresm}}
\label{app:CLT}
The proof is essentially identical to the proof of \cite[Proposition 3.3]{1912.04100} upon replacing \cite[Theorem 5.2]{1912.04100} by the improved~\cref{theo:impll}. For the sake of brevity we here only give a sketch highlighting the differences. The analogue of \cite[Eq.~(6.9)]{1912.04100} is
\begin{equation}
    \braket{G-M} = \braket*{\mathcal D^{-1}[I]M[-\un{WG}+\SS[G-M](G-M)]}, \quad \mathcal D[R]:=1-\SS[MR M]
\end{equation}
and by an explicit computation it follows that
\begin{equation}
    \mathcal D^{-1}[I] = \frac{I}{1-m^2-\abs{z}^2u^2}, \qquad \norm{\mathcal D^{-1}[I]}\lesssim \frac{1}{\tau}\lesssim 1
\end{equation}
and therefore
\begin{equation}
    \braket{G-M} = - \braket{A\un{WG}}+\landauOprec*{\frac{1}{(n\eta)^2}}, \qquad A:= \frac{M}{1-m^2-\abs{z}^2u^2}.
\end{equation}
Here
\begin{equation}\label{underline def}
    \underline{Wf(W)}:=Wf(W)-\widetilde{\mathbf{E}}\widetilde{W}(\partial_{\widetilde{W}}f)(W)
\end{equation}
for any given $f$, where $\widetilde{W}$ is an independent copy of $W$, and $\partial_{\widetilde{W}}$ denotes the directional derivative in the direction $\widetilde{W}$. In particular, we have
$$
    \underline{WG} =  WG +\SS[G]G.
$$

The accuracy of the expectation computation from~\cite[Lemma 6.2]{1912.04100} was already sufficient also on optimal mesoscopic scales and we recall from~\cref{prop clt exp} that
\begin{equation}
    \braket{G-\E G} = \braket{G-M-\mathcal E} + \landauOprec*{\frac{1}{n^{3/2}(1+\eta)}+\frac{1}{(n\eta)^2}},
\end{equation}
where
\begin{equation}
    \mathcal E:= -\frac{\ii\kappa_4}{4n}\partial_\eta (m^4) = \frac{\kappa_4}{n}m^3\braket{MA}=\frac{\kappa_4}{n}\frac{m^3\braket{M^2}}{1-m^2-\abs{z}^2u^2}
\end{equation}
using~\cite[Eq.~(6.10)]{1912.04100}.

Therefore, for the higher moments it suffices to compute
\begin{equation}
    \E \prod_i \braket{G_i-\E G_i} = \prod_i \braket{-A_i\un{WG_i}-\mathcal E_i} + \landauOprec*{\frac{\psi}{\sqrt{n\eta_\ast}}}, \qquad \psi:=\prod_i \frac{1}{n\abs{\eta_i}}
\end{equation}
due to
\begin{equation}
    \abs{\braket{A_i\un{WG_i}}} \prec \frac{1}{n\eta_i}
\end{equation}
by~\cite[Eq.~(6.14)]{1912.04100}. As in~\cite[Eq.~(6.17)]{1912.04100} we then perform a cumulant expansion to obtain
\begin{equation}\label{prod underline}
    \begin{split}
        &\E \prod_{i}\braket{-A_i\un{WG_i}-\mathcal E_i} \\
        &\quad= -\braket{\mathcal E_i} \E \prod_{i\ne 1} \braket{-A_i\un{WG_i}-\mathcal E_i}\\
        &\qquad + \sum_{i\ne 1}\E\wt\E\braket{-A_1\wt WG_1}\braket{-A_i\wt W G_i+A_i\un{WG_i\wt W G_i}}\prod_{j\ne 1,i}\braket{-A_j\un{WG_j}-\mathcal E_j}\\
        &\qquad + \sum_{k\ge 2}\sum_{ab}\sum_{\bm\alpha\in\set{ab,ba}^k}\frac{\kappa(ba,\bm\alpha)}{k!} \E \partial_{\bm\alpha}\Bigl[\braket{-A_1\Delta^{ba}G_1}\prod_{i\ne 1}\braket{-A_i\un{WG_i}-\mathcal E_i}\Bigr]\\
        &\quad= \sum_{i\ne 1}\E\Bigl[\wt\E\braket{-A_1\wt WG_1}\braket{-A_i\wt W G_i+A_i\un{WG_i\wt W G_i}}+\frac{\kappa_4 U_1U_i}{2n^2}\Bigr]\prod_{j\ne 1,i}\braket{-A_j\un{WG_j}-\mathcal E_j} + \landauOprec*{\frac{\psi}{\sqrt{n\eta_\ast}}}.
    \end{split}
\end{equation}
with \(U_i\) as in~\cref{eq:exder}, and where we used~\cite[Eqs.~(6.26),~(6.29)]{1912.04100} for the last equality. By combining~\cite[Eq.~(6.2)]{1912.04100} and the display below \cite[Eq.~(6.21)]{1912.04100} we have
\begin{equation}\label{underline 2G}
    \begin{split}
        &\E\wt\E\braket{-A_1\wt WG_1}\braket{-A_i\wt W G_i+A_i\un{WG_i\wt W G_i}}\prod_{j\ne 1,i}\braket{-A_j\un{WG_j}-\mathcal E_j}\\
        &\quad = \frac{\braket{G_1A_1 E G_i A_i E' + G_1 \SS[G_1 A_1 E G_i A_i]G_i E'}}{2n^2}\E\prod_{j\ne 1,i}\braket{-A_j\un{WG_j}-\mathcal E_j} + \landauO*{\frac{n^\epsilon\psi}{n\eta_\ast}},
    \end{split}
\end{equation}
where it is understood that \((E,E')\) is summed over \((E_1,E_2)\) and \((E_2,E_1)\). From~\cref{theo:impll} and the computations around~\cite[Eq.~(6.23)]{1912.04100} we obtain
\begin{equation}\label{2G V}
    \begin{split}
        \frac{\braket{G_1A_1 E G_i A_i E' + G_1 \SS[G_1 A_1 E G_i A_i]G_i E'}}{2n^2} &= \frac{\braket{M_{A_1 E}^{z_1,z_i} + M_{E'}^{z_i,z_1} \SS[M_{A_1 E}^{z_1,z_i}]}}{2n^2} + \landauOprec*{\frac{1}{n^2\eta_1\eta_i}\frac{1}{n\eta_\ast}}\\
        &= \frac{V_{1,i}}{2n^2} + \landauOprec*{\frac{1}{n^2\eta_1\eta_i}\frac{1}{n\eta_\ast}}
    \end{split}
\end{equation}
with \(V_{1,i}\) as in~\cref{eq:exder}. Inserting~\cref{underline 2G,2G V} into~\cref{prod underline} we conclude the proof of~\cref{prop:CLTresm} by induction.

\subsection{Asymptotic independence}
\label{app:ind}

In this section we present the proof of Theorem~\ref{lem:overb} and of Proposition~\ref{prop:indmr}.

\begin{proof}[Proof of Theorem~\ref{lem:overb}]
    Using the spectral symmetry of \(H^z\), for any \(z\in\C \) we write \(G^z\) in spectral decomposition as
    \[
        G^z(\ii\eta)=\sum_{j>0} \frac{2}{(\lambda_j^z)^2+\eta^2}\left( \begin{matrix}
                \ii \eta {\bm u}_j^z ({\bm u}_j^z)^*   & \lambda_j^z {\bm u}_j^z ({\bm v}_j^z)^* \\
                \lambda_j^z {\bm v}_j^z({\bm u}_j^z)^* & \ii \eta {\bm v}_j^z ({\bm v}_j^z)^*
            \end{matrix}\right).
    \]
    Let \(\eta = n^{-1+\epsilon}\), with $\epsilon\le\omega_p/10$, then by rigidity of the eigenvalues (see e.g. \cite[Eq. (7.4)]{1912.04100}), for any \(1\le i_0, j_0\le n^\omega_B\) such that \(\lambda_{i_0}^{z_l},\lambda_{j_0}^{z_l}\lesssim \eta\), with \(l=1,2\), and any \(z_1, z_2\) such that \(n^{-1/2+\omega_p} \le \abs{z_1-z_2}^2\le n^{-2\omega_d}\), it follows that
    \begin{gather}
        \begin{aligned}
             & \abs*{\braket{ {\bm u}_{i_0}^{z_1}, {\bm u}_{j_0}^{z_2}}}^2+\abs*{\braket{ {\bm v}_{i_0}^{z_1}, {\bm v}_{j_0}^{z_2}}}^2                                                                                                      \\
             & \qquad\lesssim \sum_{i,j=1}^n \frac{4\eta^4}{((\lambda_i^{z_1})^2+\eta^2)((\lambda_j^{z_2})^2+\eta^2)} \left(\abs*{\braket{ {\bm u}_i^{z_1}, {\bm u}_j^{z_2}}}^2+\abs*{\braket{ {\bm v}_i^{z_1}, {\bm v}_j^{z_2}}}^2 \right) \\
             & \qquad=\eta^2\Tr  (\Im G^{z_1})(\Im G^{z_2}) \lesssim \frac{n\eta^2}{|z_1-z_2|^2} +n^\xi\left(\eta^{1/6}+n^{-1/10}+\frac{1}{\sqrt{n\eta}}+\left(\frac{\eta}{|z_1-z_2|^2}\right)^{1/4}\right)                                 \\
             & \qquad\lesssim n^{-2\delta}+n^{(\delta+2\xi-\omega_p)/2}+n^{-(\delta+\omega_p-4\xi)/4}.
        \end{aligned}\label{eq:ooo}\raisetag{-6em}
    \end{gather}
    The first inequality in the third line of~\eqref{eq:ooo} is from Theorem~\ref{theo:impll}. In the last inequality we chose $\eta=n^{-1/2-\delta}|z_1-z_2|$, for any \(\delta\le\omega_p/10\). This concludes the proof by choosing \(\omega_B\) small enough and \(\omega_E:=(2\delta)\wedge [(\omega_p-\delta-2\xi)/2]\wedge [(\delta+\omega_p-4\xi)/4]\) which is still positive by our choice of $\delta$ and $\xi$.
\end{proof}

\begin{proof}[Proof of Proposition~\ref{prop:indmr}]
    The proof of this proposition is completely analogous to \cite[Proof of Proposition 3.5 in Section 7]{1912.04100},
    the only difference is that \cite[Lemma 7.9]{1912.04100} has to be replaced by Theorem~\ref{lem:overb}.
    The key change is that in \cite[Proposition 3.5]{1912.04100} we required $|z_1-z_2|$ to basically
    be order one, whilst now we consider the entire mesoscopic regime $|z_1-z_2|\gg n^{-1/2}$. In particular, Theorem~\ref{lem:overb}
    ensures that \cite[Assumption 7.1]{1912.04100} is satisfied even in the mesoscopic regime
    $|z_1-z_2|\ge n^{-1/2+\omega_p}$ and so that \cite[Proposition 7.14]{1912.04100} holds in this case as well.
    This proposition  and a simple standard GFT were the only input to prove \cite[Proposition 3.5]{1912.04100}.
\end{proof}

\subsection{Proof of Lemma~\ref{pro:imppro}}

Without loss of generality we assume that $\|(\mathcal{B}_{12}^{-1})^*[B^*]\|\lesssim 1$. The case $\|(\mathcal{B}_{21}^{-1})^*[A^*]\|\lesssim 1$ is completely analogous using cyclicity of the trace.

We start writing down the equation for $G_1AG_2$. First of all notice that
\begin{equation}
    \label{eq:G}
    G=M-M\underline{WG}+M\mathcal{S}[G-M]G,
\end{equation}
where $W:=H+Z$, with
\begin{equation}
    Z:=\left(\begin{matrix}
            0            & z \\
            \overline{z} & 0
        \end{matrix}\right)=zF^*+\overline{z}F.
\end{equation}
Here we also recall that $\underline{WG}=WG+\braket{G}G$.

Next, using \eqref{eq:G} for $G_1$ and writing $G_2=M_2+(G_2-M_2)$, we find
\begin{equation}
    \label{eq:GFGFsaa}
    \begin{split}
        G_1AG_2&=M_1AM_2+M_1A(G_2-M_2)-M_1\underline{WG_1AG_2}+M_1\mathcal{S}[G_1-M_1]G_1AG_2\\
        &\quad +M_1\mathcal{S}[G_1AG_2]G_2 \\
        &=M_1AM_2+M_1A(G_2-M_2)-M_1\underline{WG_1AG_2}+M_1\mathcal{S}[G_1-M_1]G_1AG_2\\
        &\quad +M_1\mathcal{S}[G_1AG_2]M_2+M_1\mathcal{S}[G_1AG_2](G_2-M_2),
    \end{split}
\end{equation}
where
\[
    \underline{WG_1AG_2}:=\underline{WG_1}AG_2+\mathcal{S}[G_1AG_2]G_2.
\]
Then, by \eqref{eq:GFGFsaa} it follows that
\begin{equation}
    \begin{split}
        \braket{G_1AG_2B-M_{12}^AB}&=\braket{M_1A(G_2-M_2)((\mathcal{B}_{12}^{-1})^*[B^*])^*}-\braket{M_1\underline{WG_1AG_2}((\mathcal{B}_{12}^{-1})^*[B^*])^*} \\
        &\quad+\braket{M_1\mathcal{S}[G_1-M_1]G_1AG_2((\mathcal{B}_{12}^{-1})^*[B^*])^*} +\braket{M_1\mathcal{S}[G_1AG_2](G_2-M_2)((\mathcal{B}_{12}^{-1})^*[B^*])^*}.
    \end{split}
\end{equation}

Finally, the single resolvent local law $|\braket{G_i-M_i}|\prec (n\eta_i)^{-1}$ from \eqref{eq:singlegllaw} and the bound
\[
    |\braket{M_1\underline{WG_1AG_2}((\mathcal{B}_{12}^{-1})^*[B^*])^*}|\prec \frac{1}{n\eta_*\sqrt{\eta_1\eta_2}}
\]
from \cite[Eq. (5.10c)]{1912.04100} we conclude the proof of this lemma.

\subsection{Proof of Lemma~\ref{lem:mder}}
We present the proof in the most general setting for convenience. Consider the matrix Dyson equation (MDE) \(M=M(\Lambda)\) solving
\begin{equation}\label{MDE}
    -M^{-1} =\SS[M] + \Lambda, \qquad \sgn \Im M=\sgn \Im \Lambda
\end{equation}
for some generalised spectral parameter \(\Lambda\) with \(\Im \Lambda\) either positive or negative definite in order for \(\sgn \Im \Lambda\) to be well defined.
\begin{lemma}\label{lemma single Mdot}
    If \(\Lambda=\Lambda(t)\in \C_\pm^{N\times N}\) (with \(\pm\) independent of \(t\)) solves the ODE
    \begin{equation}
        \label{eq:newmatchar}
        \dot \Lambda := \frac{\dif \Lambda}{\dif t} = - \frac{\Lambda}{2} - \SS[M],
    \end{equation}
    then
    \begin{equation}
        \dot M = \frac{M}{2}.
    \end{equation}
\end{lemma}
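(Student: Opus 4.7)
The plan is to differentiate the matrix Dyson equation \eqref{MDE} in time and combine it with the characteristic ODE \eqref{eq:newmatchar} for $\Lambda$. First I would apply $\partial_t$ to both sides of $-M^{-1} = \mathcal{S}[M] + \Lambda$, yielding (by linearity of $\mathcal{S}$)
\[
M^{-1}\dot M M^{-1} = \mathcal{S}[\dot M] + \dot\Lambda.
\]
Next, using the MDE to eliminate $\Lambda = -M^{-1} - \mathcal{S}[M]$ on the right-hand side of the characteristic equation collapses it to $\dot\Lambda = \tfrac{1}{2}M^{-1} - \tfrac{1}{2}\mathcal{S}[M]$. Substituting this back and multiplying by $M$ from both sides rearranges the identity into $\mathcal{B}[\dot M] = \mathcal{B}[M/2]$, where $\mathcal{B}[R] := R - M\mathcal{S}[R]M$ is the usual stability operator.

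If $\mathcal{B}$ were injective on $\C^{N\times N}$ the lemma would follow immediately, but $\mathcal{B}$ can have a nontrivial kernel (see Appendix~\ref{app:evect}), so I would instead verify the candidate solution directly. Define $N(t) := e^{t/2}M(0)$ and compute
\[
\widetilde\Lambda(t) := -N(t)^{-1} - \mathcal{S}[N(t)] = e^{-t/2}\Lambda(0) + (e^{-t/2}-e^{t/2})\mathcal{S}[M(0)].
\]
An elementary time-differentiation shows $\widetilde\Lambda$ satisfies \eqref{eq:newmatchar} with $\widetilde\Lambda(0) = \Lambda(0)$. Since the scalar $e^{t/2}>0$ preserves the sign of the imaginary part, $\sgn\Im N(t) = \sgn\Im M(0) = \sgn\Im\widetilde\Lambda(t)$, so $N(t)$ is a legitimate MDE solution for $\widetilde\Lambda(t)$.

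The conclusion then follows from two uniqueness statements: (i) Picard--Lindel\"of applied to \eqref{eq:newmatchar}, valid as long as $\Im\Lambda$ stays of definite sign so that $M$ (and hence $\mathcal{S}[M]$) depends smoothly on $\Lambda$, giving $\Lambda(t) = \widetilde\Lambda(t)$; and (ii) the standard uniqueness theory for the MDE under the side condition $\sgn\Im M = \sgn\Im\Lambda$, giving $M(t) = N(t) = e^{t/2}M(0)$. Differentiating this identity yields $\dot M = M/2$. The main (mild) obstacle is to keep the sign of $\Im\Lambda$ under control along the entire flow so that both uniqueness statements apply uniformly in $t$; this is precisely where the hypothesis that $\Im\Lambda(t)$ has a sign independent of $t$ enters.
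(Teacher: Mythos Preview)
Your first paragraph reproduces the paper's proof exactly: differentiate the MDE, substitute the characteristic ODE, and arrive at $\mathcal{B}[\dot M]=\mathcal{B}[M/2]$. The paper then simply writes ``the claim follows upon inverting $\mathcal{B}$'' and stops there.

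Your worry that $\mathcal{B}$ might have a nontrivial kernel is misplaced. In the paper's specific setting, Appendix~\ref{app:evect} shows that the smallest eigenvalue satisfies $\beta_-\sim|z_1-z_2|^2+\eta_1+\eta_2$; for the single-$M$ stability operator (i.e.\ $z_1=z_2$, $\eta_1=\eta_2=\eta$) this gives $\beta_-\sim\eta>0$. The eigenvalue is small but never zero while $\Im\Lambda$ stays definite, which is precisely the hypothesis of the lemma. More generally, invertibility of the stability operator for $\Im\Lambda$ definite is a standard MDE fact. So the paper's one-line conclusion is legitimate.

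Your alternative direct verification via $N(t)=e^{t/2}M(0)$ is nonetheless correct, and is precisely the explicit computation already recorded in~\eqref{eq:timeev}. It trades the appeal to $\mathcal{B}^{-1}$ for Picard--Lindel\"of uniqueness applied to~\eqref{eq:newmatchar}, which in turn requires $\Lambda\mapsto M(\Lambda)$ to be locally Lipschitz---a fact whose standard proof (implicit function theorem applied to the MDE) goes through the very same invertibility of $\mathcal{B}$. So the two routes are equivalent in logical depth; the paper's is just shorter.
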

\begin{proof}
    By inverting and differentiating~\cref{MDE} we have
    \begin{equation}
        M^{-1}\dot M M^{-1} = \SS[\dot M] + \dot \Lambda = \SS[\dot M] - \SS[M] - \frac{\Lambda}{2} = \SS[\dot M] - \frac{\SS[M]}{2} + \frac{M^{-1}}{2}
    \end{equation}
    and therefore
    \begin{equation}
        \cB[\dot M] = \cB\Bigl[\frac{M}{2}\Bigr], \qquad \cB:=1-M\SS[\cdot ]M
    \end{equation}
    and the claim follows upon inverting \(\cB\).
\end{proof}
\begin{lemma}\label{lemma Mdot}
    Let \(B\) be arbitrary, let \(M_i=M(\Lambda_i)\) be the solution to~\cref{MDE} for \(i=1,2\), and let
    \begin{equation}\label{M12 def}
        M_{12}^{B} = M^{B}(\Lambda_1,\Lambda_2) = \cB_{12}^{-1}[M_1 B M_2], \qquad \cB_{12} := 1-M_1\SS[\cdot]M_2.
    \end{equation}
    If \(\Lambda_i=\Lambda_i(t)\) satisfy the ODE
    \begin{equation}
        \dot\Lambda_i = -\frac{\Lambda_i}{2} - \SS[M_i],
    \end{equation}
    then
    \begin{equation}
        \dot M_{12}^B = \cB_{12}^{-1}[M_{12}^B ].
    \end{equation}
\end{lemma}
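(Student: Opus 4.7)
The plan is to reduce Lemma~\ref{lemma Mdot} to Lemma~\ref{lemma single Mdot} by differentiating the defining linear relation $\cB_{12}[M_{12}^B] = M_1 B M_2$ in time. The first step is to apply Lemma~\ref{lemma single Mdot} to each $\Lambda_i(t)$ individually; since the scalar ODE for $\Lambda_i$ has exactly the form required, we get $\dot M_i = M_i/2$ for $i=1,2$. In particular $\partial_t(M_1 B M_2) = M_1 B M_2$, i.e.\ the right-hand side of the defining identity is an eigenvector (with eigenvalue $1$) of $\partial_t$.

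The second step is to compute the time derivative of the operator $\cB_{12}=1-M_1\SS[\cdot]M_2$ itself, which depends on $t$ only through $M_1,M_2$. Using $\dot M_i = M_i/2$ and the product rule one obtains, for every matrix $X$,
\begin{equation*}
\dot\cB_{12}[X] \;=\; -\dot M_1 \SS[X] M_2 - M_1 \SS[X]\dot M_2 \;=\; -M_1\SS[X]M_2 \;=\; (\cB_{12}-1)[X].
\end{equation*}
The third step is to differentiate $\cB_{12}[M_{12}^B] = M_1 B M_2$ directly. The product rule, together with the two displays above, gives
\begin{equation*}
\cB_{12}[\dot M_{12}^B] \;=\; \dot M_1 B M_2 + M_1 B \dot M_2 - \dot\cB_{12}[M_{12}^B] \;=\; M_1 B M_2 + M_1\SS[M_{12}^B]M_2.
\end{equation*}

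To close the argument I would apply the defining relation~\eqref{M12 def} once more in the form $M_1\SS[M_{12}^B]M_2 = M_{12}^B - \cB_{12}[M_{12}^B] = M_{12}^B - M_1 B M_2$. The two $M_1 B M_2$ contributions then cancel and we are left with $\cB_{12}[\dot M_{12}^B] = M_{12}^B$, which yields $\dot M_{12}^B = \cB_{12}^{-1}[M_{12}^B]$ after inverting $\cB_{12}$ (which is invertible by the spectral analysis in Appendix~\ref{app:evect} under the hypotheses of the lemma). There is no genuine obstacle; the only bookkeeping subtlety is that $\cB_{12}$ is itself time-dependent through $M_1, M_2$, so the product rule produces both a $\dot\cB_{12}[M_{12}^B]$ contribution and a $\cB_{12}[\dot M_{12}^B]$ contribution, and these must be separated carefully before the defining equation of $M_{12}^B$ is invoked a second time to produce the advertised cancellation.
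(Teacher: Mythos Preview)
Your proof is correct and follows essentially the same approach as the paper: both differentiate the defining relation $\cB_{12}[M_{12}^B]=M_1BM_2$ in time and use $\dot M_i=M_i/2$ from Lemma~\ref{lemma single Mdot}. The only difference is organizational---you first isolate $\dot\cB_{12}=\cB_{12}-1$ as an operator identity and then substitute, whereas the paper groups the derivative terms as $\dot M_1(B+\SS[M_{12}^B])M_2+M_1(B+\SS[M_{12}^B])\dot M_2=\dot M_1 M_1^{-1}M_{12}^B+M_{12}^BM_2^{-1}\dot M_2$ before applying $\dot M_i=M_i/2$; the algebra is the same.
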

\begin{proof}
    We invert \(\cB_{12}\) in~\cref{M12 def} and differentiate to obtain
    \begin{equation}
        \begin{split}
            \dot M_{12} - M_1 \SS[\dot M_{12}]M_2 &= \dot M_1 (B +\SS[M_{12}])M_2 +  M_1 (B +\SS[M_{12}])\dot M_2 \\
            &=\dot M_1 M_1^{-1} M_{12} + M_{12} M_2^{-1}\dot M_2 = M_{12}%
        \end{split}
    \end{equation}
    where we used~\cref{M12 def} in the second step and~\Cref{lemma single Mdot} in the final step.
\end{proof}
\Cref{lemma Mdot} implies for arbitrary \(B_1,B_2\) that
\begin{equation}
    \begin{split}
        \label{eq:12derm}
        \braket{\dot M_{12}^{B_1}B_2} &= \braket*{ B_2\cB_{12}^{-1}[M_{12}^{B_1}]} = \braket*{(1-\SS[M_2\cdot M_1])^{-1}[B_2] M_{12}^{B_1}   }\\ %
        &= \braket*{M_2^{-1}M_{21}^{B_2} M_1^{-1}M_{12}^{B_1}} = \braket*{B_2M_{12}^{B_1}} + \braket*{\SS[M_{21}^{B_2}]M_{12}^{B_1}}
    \end{split}
\end{equation}
using \(M_{2}^{-1}M_{21}^BM_{1}^{-1}=\SS[M_{21}^B]+B\) in the last step. This concludes the proof of Lemma~\ref{lem:mder}.

\subsection{Proof of Lemma~\ref{lem:newlem1}}

Adding and subtracting the deterministic approximations of $G_1L_-'G_2$ and $G_2L_-G_1$, and using the definition of $\mathcal{S}$ in \eqref{defS}, we obtain
\begin{equation}
    \begin{split}
        \braket{\mathcal{S}[G_1L_-'G_2]G_2L_-G_1}&=\braket{\mathcal{S}[M_{12}^{L_-'}]M_{21}^{L_-}}+  \braket{G_1L_-'G_2-M_{12}^{L_-'}}\braket{M_{21}^{L_-}}+\braket{G_1L_-'G_2}\braket{G_2L_-G_1-M_{21}^{L_-}}\\
        &\quad-\braket{(G_1L_-'G_2-M_{12}^{L_-'})E_-}\braket{M_{21}^{L_-}E_-}-\braket{G_1L_-'G_2E_-}\braket{(G_2L_-G_1-M_{21}^{L_-})E_-}.
    \end{split}
\end{equation}

In Appendix~\ref{app:evect} we will derive various elementary facts about the eigenvectors of the stability operator. In particular, in \eqref{eq:usetaylexp} we will prove that $\eta_{1,t}\eta_{2,t}<0$ we have
\begin{equation}
    \label{eq:mayneedon}
    I=(1+\mathcal{O}(|z_{1,t}-z_{2,t}|))L_-+\mathcal{O}(|z_{1,t}-z_{2,t}|)L_+, \qquad E_-=(1+\mathcal{O}(|z_{1,t}-z_{2,t}|))L_++\mathcal{O}(|z_{1,t}-z_{2,t}|)L_-,
\end{equation}
and similar relations hold with $L_\pm$ replaced with $L_\pm'$. Next, using \eqref{eq:mayneedon}, we write
\[
    \braket{(G_1L_-'G_2-M_{12}^{L_-'})E_-}=(1+\mathcal{O}(|z_{1,t}-z_{2,t}|))\braket{(G_1L_-'G_2-M_{12}^{L_-'})L_+}+\mathcal{O}(|z_{1,t}-z_{2,t}|)\braket{(G_1L_-'G_2-M_{12}^{L_-'})L_-}.
\]
We can thus estimate
\begin{equation}
    \label{eq:e-b}
    \braket{(G_1L_-'G_2-M_{12}^{L_-'})E_-}=\mathcal{O}\left(\frac{n^\xi}{n\eta_{*,t}\sqrt{|\eta_{1,t}\eta_{2,t}|}}+|z_1-z_2||Y_t|\right),
\end{equation}
where we used Lemma~\ref{pro:imppro} for $A=L_+$ and $B=L_-'$ to estimate the terms containing $L_+$. Notice that $\| (\mathcal{B}_{12}^*)^{-1} [L_+^*]\| = \| L_+\|/|\beta_+| \lesssim \| L_+\| \sim 1$, where we used that $|\beta_+|\sim 1$ by \eqref{eq:lowbeigva}; this ensures the applicability Lemma~\ref{pro:imppro} for $A=L_+$ and $B=L_-'$.

Using that
\[
    \big|\braket{M_{21}^{L_-}E_-}\big|\lesssim \big|\braket{M_{21}^{L_-}L_+}\big|+|z_{1,t}-z_{2,t}|\big|\braket{M_{21}^{L_-}L_-}\big| \lesssim 1+\frac{|z_{1,t}-z_{2,t}|}{|z_{1,t}-z_{2,t}|^2+\eta_{*,t}},
\]
together with \eqref{eq:e-b} and similar bounds for $\braket{G_1L_-'G_2E_-}=\braket{M_{12}^{L_-'}E_-}+\braket{(G_1L_-'G_2-M_{12}^{L_-'})E_-}$, we obtain
\begin{equation}
    \begin{split}
        \label{eq:addinfneedapp}
        \braket{\mathcal{S}[G_1L_-'G_2]G_2L_-G_1}&=\braket{\mathcal{S}[M_{12}^{L_-}]M_{21}^{L_-}}+  \braket{G_1L_-'G_2-M_{12}^{L_-'}}\braket{M_{21}^{L_-}}+\braket{G_1L_-'G_2}\braket{G_2L_-G_1-M_{21}^{L_-}} \\
        &\quad+\mathcal{O}\left(\frac{n^\xi}{n\eta_{*,t}^2\sqrt{|\eta_{1,t}\eta_{2,t}|}}+|Y_t|+|z_{1,t}-z_{2,t}|^2|Y_t|^2\right),
    \end{split}
\end{equation}
where we used that $\epsilon>\xi/10$ (recall $\eta_{*,t}\ge n^{-1+\epsilon}$).

Finally, writing $\braket{G_1L_-'G_2}=\braket{M_{12}^{L_-'}}+\braket{(G_1L_-'G_2-M_{12}^{L_-'})}$, using
\begin{equation}
    \begin{split}
        \braket{M_{21}^{L_-}}&=\braket{M_{12}^I}(1+\mathcal{O}(|z_{1,t}-z_{2,t}|)), \\
        \braket{G_1L_-'G_2-M_{12}^{L_-'}}&=(1+\mathcal{O}(|z_{1,t}-z_{2,t}|))Y_t+\mathcal{O}(|z_{1,t}-z_{2,t}|)\braket{(G_1L_-'G_2-M_{12}^{L_-'})L_+},
    \end{split}
\end{equation}
by \eqref{eq:mayneedon}, and a similar approximation for the last term in the first line of \eqref{eq:addinfneedapp}, we conclude \eqref{eq:addinfneed}.

\subsection{Proof of Lemma~\ref{lem:m12bound}} Define
\[
    \Xi_t=\Xi(\eta_{1,t},\eta_{2,t},z_{1,t},z_{2,t}):=|\eta_{1,t}|+|\eta_{2,t}|+|z_{1,t}-z_{2,t}|^2.
\]
Then, we estimate
\begin{equation}
    \label{eq:explM12a}
    \begin{split}
        &\braket{M_{12,t}^I}=\frac{m_{1,t}m_{2,t}+1-\Re[z_{1,t}\overline{z_{2,t}}]u_{1,t}u_{2,t}}{1+|z_{1,t}z_{2,t}|^2u_{1,t}^2u_{2,t}^2-m_{1,t}^2m_{2,t}^2-2u_{1,t}u_{2,t}\Re [z_{1,t}\overline{z_{2,t}}]}-1 \\
        &=\frac{\sqrt{(1-|z_{1,t}|^2)(1-|z_{2,t}|^2)}+1-\Re[z_{1,t}\overline{z_{2,t}}]}{|z_{1,t}-z_{2,t}|^2+|\eta_{1,t}|\sqrt{1-|z_{1,t}|^2}+|\eta_{2,t}|\sqrt{1-|z_{2,t}|^2}}\big(1+\mathcal{O}\left(\Xi_t\right)\big) \\
        &=\frac{\sqrt{(1-|z_{1,0}|^2)(1-|z_{2,0}|^2)}+1-\Re[z_{1,0}\overline{z_{2,0}}]}{|z_{1,0}-z_{2,0}|^2+|\eta_{1,0}|\sqrt{1-|z_{1,0}|^2}+|\eta_{2,0}|\sqrt{1-|z_{2,0}|^2}-t[2-|z_{1,0}|^2-|z_{2,0}|^2]}\big(1+\mathcal{O}\left(\Xi_0+t\right)\big) \\
        &\le \frac{2-|z_{1,0}|^2-|z_{2,0}|^2}{|z_{1,0}-z_{2,0}|^2+|\eta_{1,0}|\sqrt{1-|z_{1,0}|^2}+|\eta_{2,0}|\sqrt{1-|z_{2,0}|^2}-t[2-|z_{1,0}|^2-|z_{2,0}|^2]}\big(1+\mathcal{O}\left(\Xi_0+t\right)\big),
    \end{split}
\end{equation}

We point out that here we also used that
\[
    |\eta_{i,t}|=|\eta_{i,0}|-t\sqrt{1-|z_{i,0}|^2}+\mathcal{O}(t|\eta_{i,t}|), \qquad z_{i,t}=e^{-t/2}z_{i,0}=z_{i,0}\big(1+\mathcal{O}(t)\big),
\]
and that
\[
    1-\Re[z_{1,0}\overline{z_{2,0}}]=1-\frac{|z_{1,0}|^2+|z_{2,0}|^2}{2}+|z_{1,0}-z_{2,0}|^2.
\]
Additionally, to go from the first to the second line of \eqref{eq:explM12} we also used \eqref{eq:expsmalleta}.

\section{Eigendecomposition of the stability operator}
\label{app:evect}

The stability operator \(\cB_{12}:=1-M_1\SS[\cdot]M_2\), with $M_i:=M^{z_i}(w_i)$,
acts on the $4n^2$ dimensional space of $(2n)\times (2n)$ block matrices.
From the action of $\SS$ and the fact that $M_i$ are block constant, it
immediately follows that both $\cB_{12}$ and $\cB^*_{12}$
leave the $4n^2-4$ dimensional subspace of block traceless matrices invariant and they
act trivially as the
identity on it. Now we describe its spectral data on the 4 dimensional space of  block constant matrices,
which are a constant multiple of the \(n\times n\) identity in all four blocks.
We use the notation $u_i:=u^{z_i}(\ii\eta_i)$, $m_i:=m^{z_i}(\ii\eta_i)$, with $u^z$, $m^z$ being defined in \eqref{Mz} and \eqref{eq:MDEscal}, respectively. There are eigenvalues \((1,1,\beta_+,\beta_-)\) with \footnote{The complex square root $\sqrt{\cdot}$ in \eqref{eq:evalues} and \eqref{eq:Rpm} is defined using the standard branch cut ${\mathbf{C} \setminus (-\infty, 0)}$.}
\begin{equation}
    \label{eq:evalues}
    \beta_\pm := 1\pm \sqrt{s} - u_1 u_2 \Re z_1\ov{z_2} , \quad s:= m_1^2 m_2^2 - u_1^2 u_2^2 (\Im z_1\ov{z_2})^2,
\end{equation}
and right eigenvectors \(F,F^\ast,R_+,R_-\) in the sense that \(\cB_{12}[R_\pm]=\beta_{\pm}R_\pm\) and \(\cB_{12}[F^{(\ast)}]=F^{(\ast)}\), where \(F\) has been defined in \eqref{eq:defe1e2f} and
\begin{equation}\label{eq:Rpm}
    R_\pm =   \begin{pmatrix}
        -u_1u_2 \Re z_1 \bar{z}_2 \pm \sqrt{s}                                                          & z_1 u_1 m_2  +
        \frac{z_2u_2m_1^2m_2}{ \ii u_1 u_2 \Im z_1 \bar{z}_2 \mp \sqrt{s}  }                                             \\[2mm]
        \bar{z}_2 u_2 m_1  + \frac{\bar{z}_1u_1m_2^2m_1}{ \ii u_1 u_2 \Im z_1 \bar{z}_2 \mp \sqrt{s}  } &
        \frac{m_1 m_2}{ \ii u_1 u_2 \Im z_1 \bar{z}_2 \mp \sqrt{s}  } \big(-u_1u_2 \Re z_1 \bar{z}_2 \pm \sqrt{s}\big)
    \end{pmatrix}.
\end{equation}

We point out that the quantities $s,\beta_\pm$, $R_\pm$, and $L_\pm$ below naturally depend on $z_1,z_2,\eta_1,\eta_2$; we omitted this from the notation for simplicity. Furthermore, we remark that the explicit formulas for the eigenvalues in \eqref{eq:evalues}, for the right eigenvectors in \eqref{eq:Rpm}, and for the left eigenvectors in \eqref{eq:Lpm} below hold for any $z_1,z_2\in\C$ and for any spectral parameters $w_1,w_2\in \C\setminus \R$; however we present the following estimates only on the imaginary axis, i.e. for $w_i=\ii\eta_i$, and for $1-|z_i|^2\sim 1$, $|z_1-z_2|\ll 1$, since this is the only regime needed in the proof of Theorem~\ref{theo:mesoclt}. Note that by an explicit computation for $z_1=z_2$ and a simple Taylor expansion it follows that $\norm{R_\pm}\sim 1$. On the eigenvalues $\beta_\pm$ we also have the following asymptotics:
\begin{equation}
    \label{eq:lowbeigva}
    \beta_-\sim |z_1-z_2|^2+\eta_1+\eta_2, \qquad |\beta_+|\sim  1.
\end{equation}
The fact that $|\beta_+|\sim 1$ follows trivially by its explicit expression in \eqref{eq:evalues}, the lower bound for $|\beta_-|$ follows by \cite[Lemma 6.1]{MR4235475}, whilst the upper bound follows from the fact that for $\eta_1=\eta_2=0$ we have
\[
    \begin{split}
        \beta_-&=1-\Re[z_1\overline{z_2}]-\sqrt{1-|z_1|^2-|z_2|^2+\Re[z_1\overline{z_2}]^2} \\
        &= 1-\frac{|z_1|^2+|z_2|^2}{2}+\frac{|z_1-z_2|^2}{2}-\sqrt{\left(1-\frac{|z_1|^2+|z_2|^2}{2}\right)^2-|z_1-z_2|^2\left(|z_1|^2+|z_2|^2+\frac{|z_1-z_2|^2}{4}\right)} \\
        &\lesssim |z_1-z_2|^2
    \end{split}
\]
and a simple Taylor expansion in the $\eta_i$ variables. We point out that here we used $2\Re[z_1\overline{z_2}]=|z_1|^2+|z_2|^2-|z_1-z_2|^2$ and that for $\eta_i=0$ it holds $m_i^2=|z_i|^2-1$, $u_i=1$.

Since \(\cB_{12}\) is not self-adjoint, it has a separate set of left eigenvectors defined by
$$
    \mathcal{B}_{12}^*[L_\pm^*]=\overline{\beta_\pm}L_\pm^*, \qquad \mathcal{B}_{12}^*[L_{(*)}^*]=L_{(*)}^*.
$$
The left eigenvectors corresponding to \(\overline{\beta_\pm}\) are given by
\begin{equation}
    \label{eq:Lpm}
    L_\pm = \frac{1}{m_1 m_2}\begin{pmatrix}
        \ii u_1 u_2 \Im z_1 \bar{z}_2 \mp \sqrt{s} & 0 \\ 0 &  m_1 m_2
    \end{pmatrix}.
\end{equation}
We do not give the explicit form of the eigenvectors $L_{(*)}^*$ since they are not used for our analysis. With the normalizations above we have $\norm{L_\pm}\sim 1$, and $\braket{L_-R_-}\sim 1$; however $\braket{L_+R_+}$
can be small when $|z_1|,|z_2|\approx 2^{-1/2}$, i.e. when $\beta_+$ resonates with
the eigenvalue $1$.  As before, for these conclusions  we used that for $z_1=z_2=z$ it holds
\begin{equation}\label{R+}
    \braket{L_\pm R_\pm}=|m|^2\mp |z|^2|u|^2,
\end{equation}
and Taylor expansion to access the $|z_1-z_2|\ll 1$ regime.

We will also need to express the standard basis vectors $I, E_-$ in terms of $L_\pm$.
For $z_1=z_2$ it holds $I=L_-$, $E_-=L_+$ if $\eta_1\eta_2<0$, and $I=L_+$, $E_-=L_-$ if $\eta_1\eta_2>0$.
Then again using a simple Taylor expansion it follows that
\begin{equation}
    \label{eq:usetaylexp}
    I=(1+\mathcal{O}(|z_1-z_2|))L_\sigma+\mathcal{O}(|z_1-z_2|)L_{-\sigma}, \qquad E_-=(1+\mathcal{O}(|z_1-z_2|))L_{-\sigma}+\mathcal{O}(|z_1-z_2|)L_\sigma,
\end{equation}
with $\sigma:=\mathrm{sign}(\eta_1\eta_2)$.

\printbibliography

\end{document}